\documentclass[11pt]{article}

\usepackage[margin=1in]{geometry}
\usepackage{amsmath,amssymb,amsthm,mathrsfs}
\usepackage{microtype}
\usepackage{booktabs,array,tikz,placeins}
\usetikzlibrary{arrows.meta,positioning,fit}
\usepackage[colorlinks=true,citecolor=blue,linkcolor=blue,urlcolor=black]{hyperref}
\hypersetup{
  pdftitle={Zeros and interlacing for multiset Eulerian--Narayana polynomials},
  pdfauthor={Philip B. Zhang and Tongyuan Zhao}
}

\newtheorem{theorem}{Theorem}[section]
\newtheorem{proposition}[theorem]{Proposition}
\newtheorem{lemma}[theorem]{Lemma}
\newtheorem{corollary}[theorem]{Corollary}
\theoremstyle{remark}
\newtheorem{remark}[theorem]{Remark}
\newtheorem{example}[theorem]{Example}
\newcommand{\T}{\mathcal T}
\newcommand{\E}{\mathscr E}
\newcommand{\Int}{\mathsf{int}}
\newcommand{\Leaf}{\mathsf{leaf}}

\title{Zeros and interlacing for multiset Eulerian--Narayana polynomials}
\author{Philip B. Zhang$^{1}$ \qquad Tongyuan Zhao$^{2}$\\[1mm]
{\small $^{1}$College of Mathematical Sciences \& Institute of Mathematics and Interdisciplinary Sciences,}\\[-1mm]
{\small Tianjin Normal University, Tianjin 300387, P. R. China}\\[1mm]
{\small $^{2}$College of Science, China University of Petroleum (Beijing), Beijing 102249, P. R. China}\\[-1mm]
{\small \texttt{zhang@tjnu.edu.cn} \qquad \texttt{zhaotongyuan@cup.edu.cn}}}
\date{}

\begin{document}
\maketitle

\begin{abstract}
We prove the real-rootedness conjecture of Lin, Ma, Ma, and Zhou (2021)
for multiset Eulerian--Narayana polynomials, including
simplicity and negativity of all zeros after removing the factor $t$.
Using a forest decomposition and Lagrange inversion, we derive a
differential formula and a first-order factorization of the differential operator.
We use these factors to show that the nonconstant $\gamma$-coefficients
and ordered $\gamma$-zeros are nondecreasing, with corresponding
bounds given by the Narayana and Eulerian polynomials. The same factorization
gives totally nonnegative coefficient transition matrices. We also prove
strict interlacing of the zeros of generating polynomials arising from
adjacent columns of the transition matrices.
\end{abstract}

\medskip
\noindent\textbf{Keywords.}
Weakly increasing tree, multiset Eulerian--Narayana polynomial, real-rootedness,
interlacing, total nonnegativity.

\smallskip
\noindent\textbf{2020 Mathematics Subject Classification.}
05A05, 05A15, 05A19, 05C05, 26C10.

\section{Introduction}

Lin, Ma, Ma, and Zhou~\cite{LMMZ21} introduced weakly increasing
trees on multisets as a common extension of increasing trees and plane
trees. Their leaf enumerators interpolate between the Eulerian and
Narayana polynomials.

Let
\[
M=M(p_1,\ldots,p_m)=\{1^{p_1},\ldots,m^{p_m}\},
\qquad p_i\ge1,\qquad |M|=N,
\]
where $p_i$ is the multiplicity of label $i$. A weakly increasing
tree on $M$ is a plane tree on the multiset union $M\uplus\{0\}$,
rooted at $0$, with labels
weakly increasing along every root-to-leaf path and from left to
right among siblings. Write $\T_M$ for these trees and $\Leaf(T)$
and $\Int(T)$ for their numbers of leaves and internal nodes,
counting the root as internal for nonempty $M$.
Set
\[
A_M(t)=\sum_{T\in\T_M}t^{\Leaf(T)},
\qquad P_M(t)=A_M(t)/t.
\]
Lin, Ma, Ma, and Zhou proved a $\gamma$-positive expansion for
$P_M(t)$~\cite{LMMZ21}, which implies palindromicity. They also
conjectured that \emph{$A_M(t)$ has only real zeros for every multiset
$M$ with positive multiplicities}~\cite{LMMZ21}.
Since $\Leaf(T)+\Int(T)=N+1$, palindromicity gives
\[
P_M(t)=t^{N-1}P_M(1/t)
=\sum_{T\in\T_M}t^{N-\Leaf(T)}
=\sum_{T\in\T_M}t^{\Int(T)-1}.
\]
Thus the normalized leaf and internal-node enumerators agree.
The polynomial $P_M(t)$ is monic of degree $N-1$ with constant term $1$.

For $M=\{1,\ldots,m\}$ and $M=\{1^m\}$, we recover the Eulerian and
Narayana polynomials of degree $m-1$, respectively~\cite{LMMZ21}.
See Liu and Wang~\cite[Sections~3.4 and~3.6]{LiuWang07} for their
real-rootedness via interlacing. Lin, Ma, Ma, and Zhou~\cite{LMMZ21} identified the
cases $\{1^2,\ldots,m^2\}$ and
$\{1^2,\ldots,(m-1)^2,m\}$ with $\mathbf{s}$-Eulerian
polynomials, whose real-rootedness follows from Savage and
Visontai~\cite[Theorem~1.1]{SavageVisontai15}. For
$M=\{1^k,\ldots,(m-1)^k,m\}$, Lin, Xu, and
Zhao~\cite[Theorems~3.2 and~3.3]{LXZ22} showed that $P_M(t)$ coincides with the
descent polynomial for permutations of $\{1^k,\ldots,m^k\}$, so
Simion's theorem~\cite{Simion84} applies, and we refer
to~\cite[Section~4.2.1]{DHS26} for further discussion.
For arbitrary multiplicities, we prove the conjecture and show
that every zero of $A_M(t)$ is simple.

\begin{theorem}\label{thm:main-real-rooted}
Let $M$ be a nonempty multiset of size $N$. If $N=1$, then
$P_M(t)=1$. If $N\ge2$, all $N-1$ zeros of $P_M(t)$ are simple
and negative.
\end{theorem}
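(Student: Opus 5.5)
The plan follows the route announced in the abstract: derive a recursion in $m$, the number of distinct labels, in which $P_M$ arises from $P_{M'}$ by a composition of first-order differential operators. Write $M'=M(p_1,\ldots,p_{m-1})$ and $M=M(p_1,\ldots,p_m)$; the new label $m$ occurs $p=p_m$ times.

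\textbf{Step 1: a forest decomposition.} The copies of the largest label $m$ in a weakly increasing tree are leaves or lie in chains hanging below smaller labels. Each occurrence is inserted into a tree on $M'$, and the inserted copies form small plane forests of nodes labelled $m$ attached at admissible positions. Recording internal nodes and leaves through generating functions in a formal variable, I would use Lagrange inversion for plane forests to express $A_M$ in closed form as a linear differential expression in $A_{M'}$. I expect a formula of the shape
\[
A_M(t)=\frac{1}{p!}\,\Phi_p\bigl(D\bigr)\,A_{M'}(t),\qquad D=\frac{d}{dt},
\]
where $\Phi_p$ is a polynomial in $t$ and $D$ of order $p$.

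\textbf{Step 2: factor the operator.} I would show that $\Phi_p$ factors into $p$ first-order operators
\[
L_j=\alpha_j(t)+\beta_j\,t(1+t)\,D,\qquad j=1,\ldots,p,
\]
with $\alpha_j$ linear in $t$ and constants $\beta_j>0$. The model cases guide this: for $p=1$ the factor should reduce to the classical Eulerian recursion
\[
A_{n+1}=(n+1)tA_n+t(1-t)A_n',
\]
rewritten in the leaf normalization. The Narayana case $M=\{1^m\}$ is the base, where $M'$ is empty and the formula must reproduce the Narayana polynomials. Checking that a product of such factors matches the Lagrange-inversion coefficients is a coefficient identity in binomials, and I regard finding and verifying this factorization as the main obstacle. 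If it fails, the fallback is to prove directly that $\Phi_p$ preserves real-rootedness via a Hermite--Biehler or multiplier-sequence criterion, which is less clean.

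\textbf{Step 3: each factor preserves the property.} Assume $Q$ has degree $d$, simple negative zeros, and $Q(0)>0$. Consider
\[
L_jQ=\alpha_j(t)\,Q+\beta_j\,t(1+t)\,Q'.
\]
Evaluating at consecutive zeros $r_i$ of $Q$ gives
\[
L_jQ(r_i)=\beta_j\,r_i(1+r_i)\,Q'(r_i),
\]
and $Q'$ alternates in sign at consecutive zeros of $Q$. So there is one zero of $L_jQ$ in each gap, provided the factor $r_i(1+r_i)$ does not change sign there. For this I need control of where the zeros sit relative to $-1$. I would handle it in one of two ways:
\begin{itemize}
\item show that the operator has the symmetric form $t\mapsto 1/t$, which is consistent with palindromicity and keeps zeros paired as $r\leftrightarrow 1/r$, and apply the sign analysis on $(-\infty,0)$ written via $t(1+t)$ or the appropriate variant $t$ after the change of variables;
\item or use the standard Wang--Yeh style lemma: if $F=aG+bG'$ with $a,b$ of suitable signs on $(-\infty,0]$, then $F$ has real zeros interlacing those of $G$.
\end{itemize}
The endpoint behaviour, namely the sign at $0$ and at $-\infty$ from the leading coefficients, supplies the remaining zeros and gives the degree count $d+1$ after the factor $t$ is removed. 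Strict sign alternation gives simplicity, and $L_jQ(0)\neq 0$ after dividing by $t$ gives negativity.

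\textbf{Step 4: induction.} Induct on $m$. The base is the Narayana polynomial, which has simple negative zeros by Liu--Wang, or apply the factorization from the empty multiset. Then apply the $p_m$ factors one at a time, which handles any multiplicities. The case $N=1$ is immediate, since there $A_M=t$.
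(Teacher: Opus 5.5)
There is a genuine gap. Your overall plan (delete the largest label, apply Lagrange inversion, then insert zeros through first-order factors) is reasonable, but Steps~1 and~2 carry the whole proof and are only stated as expectations.

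Applying Lagrange inversion in the variable $t$ to the forest equation $\mathcal F-1=u\mathcal F(t+\mathcal F-1)$ gives coefficient extractions $[u^{r-1}](\cdots)$, not a differential operator in $t$. You give no way to pass from one to the other, and you call the factorization ``the main obstacle'' with only a vague fallback. The missing idea is a M\"obius change of variables. Put $t=(2x-1)/(2x+1)$, $H_M(x)=2^{-|M|-1}(2x+1)^{|M|+1}A_M(t)$, $2X+1=(2x+1)\mathcal F$ and $\eta=2u/(2x+1)$. Then the forest equation becomes $X=x+\eta\bigl(X^2-\tfrac14\bigr)$, and the decomposition reads $\sum_r\eta^rH_{M^{(r)}}(x)=H_M(X)$. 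Lagrange--B\"urmann then gives $H_{M^{(r)}}(x)=\frac1{r!}\bigl(\frac d{dx}\bigr)^{r-1}\bigl(Q(x)^rH_M'(x)\bigr)$ with $Q(x)=x^2-\tfrac14$.

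With this formula, Rolle's theorem alone proves the theorem. Inductively, $H_M$ has simple zeros at $\pm\tfrac12$ and $|M|-1$ simple zeros between them. The $j$th derivative of $Q^rH_M'$ has multiplicity $r-j$ at $\pm\tfrac12$ and $|M|+j$ simple interior zeros. Finally, $(-\tfrac12,\tfrac12)$ maps onto $(-\infty,0)$. So the factorization is not needed for this theorem. If you want it anyway, it follows from $\frac d{dx}\bigl(Q^ag\bigr)=Q^{a-1}\bigl(Qg'+aQ'g\bigr)$.

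The operator shape you guess in Step~2 is also wrong, and with it Step~3 fails. The factors contain $t(1-t)D$, not $t(1+t)D$. Your own $p=1$ model, which in leaf normalization reads $A_{M^{(1)}}=(|M|+1)tA_M+t(1-t)A_M'$, is not of the form $\alpha(t)+\beta t(1+t)D$. For $P_M$ the factor acting at degree $d$ is $f\mapsto\frac1a\bigl(t(1-t)f'+(a+(d+a)t)f\bigr)$. When a block of size $r$ is inserted, $a$ runs through $1,r,r-1,\ldots,2$.

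Under your $t(1+t)$ form, suppose consecutive zeros of the input polynomial $h$ satisfy $r_i<-1<r_{i+1}$. Then $(L_jh)(r_i)$ and $(L_jh)(r_{i+1})$ have the \emph{same} sign, so the interlacing count breaks. Neither of your proposed remedies is carried out. With the correct factor, $r_i(1-r_i)<0$ at every negative zero, so the values alternate. The constant term $f(0)>0$ and the unchanged positive leading coefficient then supply one more zero in $(-\infty,r_1)$ and one in $(r_d,0)$. No control relative to $-1$ is needed, and your Steps~3--4 go through. This is the root-insertion lemma the paper uses later for its refinement results.
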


Deleting the largest label leaves a smaller tree with an ordered
forest attached at each vertex. A M\"obius substitution and
Lagrange--B\"urmann inversion turn the forest generating function
into a differential formula, to which Rolle's theorem applies.

Related work on weakly increasing trees includes bijections and
equidistributions~\cite{ChenFu22,LinLiuWangZang24} and multiset
Schett polynomials~\cite{LinMa25}.
The multiset Eulerian--Narayana polynomials studied here differ
from the usual multiset Eulerian polynomials, which enumerate
descents of multipermutations. Simion~\cite{Simion84}
proved real-rootedness of the latter for arbitrary multisets.
Ma and Pan~\cite{MaPan23} studied refinements by the first entry,
and Deligeorgaki, Han, and Solus~\cite{DHS26} considered colored
analogues and self-interlacing properties.

To compare multiplicities of fixed total size, we define $\Gamma_M(z)$ by
\[
P_M(t)=(1+t)^{N-1}
\Gamma_M\left(\frac{t}{(1+t)^2}\right).
\]
Theorem~\ref{thm:main-real-rooted} and palindromicity imply that
$\Gamma_M(z)$ has degree $q=\lfloor(N-1)/2\rfloor$ and simple
negative zeros. This also recovers the $\gamma$-positivity of $P_M(t)$
proved in~\cite{LMMZ21}, using the standard implication for palindromic
polynomials with nonnegative coefficients and real
zeros~\cite[Remark~7.3.1]{Branden15}.
When $q=0$, we have $\Gamma_M(z)=1$ and its zero list is empty.
Throughout, inequalities over empty index ranges are vacuous.
For a composition $\alpha=(p_1,\ldots,p_m)$ of $N$, we write
$P_\alpha(t)=P_{M(p_1,\ldots,p_m)}(t)$ and
$\Gamma_\alpha(z)=\Gamma_{M(p_1,\ldots,p_m)}(z)$.
We denote the zeros of $\Gamma_\alpha(z)$ by
\[
\xi_1(\alpha)<\cdots<\xi_q(\alpha)<0.
\]
A composition refines another if we can
obtain the former by splitting parts of the latter into consecutive
positive parts. Write $P_\alpha(t)\preceq_\gamma P_\beta(t)$ if every
coefficient of $\Gamma_\alpha(z)$ is at most the corresponding
coefficient of $\Gamma_\beta(z)$.

\begin{theorem}\label{thm:refinement}
Let $\beta$ refine $\alpha$. Then
\[
P_\alpha(t)\preceq_\gamma P_\beta(t),
\qquad \xi_i(\alpha)\le\xi_i(\beta)
\quad\text{for }1\le i\le q.
\]
If $P_\alpha(t)\ne P_\beta(t)$, every nonconstant $\gamma$-coefficient
inequality and every zero inequality is strict.
\end{theorem}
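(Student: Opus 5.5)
The plan is to reduce to a single elementary refinement and then compare the two $\gamma$-polynomials through a one-parameter interpolation whose zeros move monotonically. Refinement is generated by elementary splits: replacing one part $p=a+b$ of $\alpha$ (with $a,b\ge1$) by the two consecutive parts $a,b$. Both $\preceq_\gamma$ and the coordinatewise order on the zero lists are transitive. Strictness also propagates along a chain $\alpha=\alpha^{(0)},\alpha^{(1)},\ldots,\alpha^{(r)}=\beta$ of elementary splits: if $P_\alpha\ne P_\beta$, some step changes the polynomial, that step gives strict inequalities, and the remaining steps give weak ones. So it suffices to treat an elementary split $\alpha\to\beta$.

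For a single split I would use the differential formula obtained from the forest decomposition. Adding the largest label with multiplicity $p_m$ acts on $P$ by a differential operator, and by the factorization announced in the abstract this operator is a product of first-order factors of the form $c+\ell(t)\,\frac{d}{dt}$. Passing to the variable $z=t/(1+t)^2$, each such factor becomes a first-order operator on $\Gamma$ that preserves real-rootedness. To compare $\alpha$ with $\beta$, I would write both $P_\alpha$ and $P_\beta$ as the same operator chain applied to a common polynomial. The two chains should differ only in the factors attached to the split part. I would then compute that difference explicitly and aim for an identity
\[
\Gamma_\beta(z)=\Gamma_\alpha(z)+z\,R(z),
\]
where $R$ has nonnegative coefficients, $\deg R\le q-1$, and $R$ is either identically $0$ (the case $P_\alpha=P_\beta$) or has $q-1$ simple negative zeros interlacing those of $\Gamma_\alpha$. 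Nonnegativity of the coefficients of $R$ gives $P_\alpha\preceq_\gamma P_\beta$ immediately. If $R\ne0$ and its coefficients are strictly positive in degrees $0,\ldots,q-1$, then every nonconstant $\gamma$-coefficient inequality is strict; the constant coefficients are both $1$.

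For the zeros, I would consider $G_s(z)=\Gamma_\alpha(z)+s\,zR(z)$ for $s\in[0,1]$. Because $zR$ and $\Gamma_\alpha$ have a common interlacer (the zeros of $zR$ alternate with those of $\Gamma_\alpha$), Obreschkoff's theorem shows that every $G_s$ has $q$ simple negative zeros. These zeros $\xi_i(s)$ are therefore smooth in $s$, and implicit differentiation gives
\[
\xi_i'(s)=-\frac{\xi_i R(\xi_i)}{G_s'(\xi_i)}.
\]
The interlacing pattern fixes the sign of $R(\xi_i)$ relative to $G_s'(\xi_i)$ at every $i$. I would check that this forces $\xi_i'(s)>0$ for all $i$, which gives $\xi_i(\alpha)<\xi_i(\beta)$ whenever $R\ne0$. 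When $q=0$ all the claims are vacuous.

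The main obstacle is the second step: deriving the identity for $\Gamma_\beta-\Gamma_\alpha$ and proving that $R$ interlaces $\Gamma_\alpha$. I would first try induction along the operator chain. Each first-order factor should map an interlacing pair $(\Gamma,\,zR)$ to an interlacing pair of the same type, so that the interlacing established for the split factor is preserved through all later factors. To start this induction, I would check it directly for the split factor against the unsplit one, using the explicit coefficients of the factorization.
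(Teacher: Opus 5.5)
There is a genuine gap. Your reduction to elementary splits and the homotopy $G_s=\Gamma_\alpha+s\,zR$ are sound. The trouble is that everything rests on the claim that $R=(\Gamma_\beta-\Gamma_\alpha)/z$ has $q-1$ zeros interlacing those of $\Gamma_\alpha$, and this claim is not merely unproven: it is stronger than the theorem and false in general. Your own sign count shows that it forces $\xi_i(\alpha)<\xi_i(\beta)<\xi_{i+1}(\alpha)$, that is, it forces $\Gamma_\alpha$ and $\Gamma_\beta$ to interlace. The theorem only gives a coordinatewise order, and refinement does not produce interlacing.

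For the (non-elementary) pair $(10)$ and $\mathbf 1_{10}$, one has $\Gamma_{(10)}=1+36z+252z^2+420z^3+126z^4$, whose largest zero is about $-0.0366$. By contrast, $\Gamma_{\mathbf 1_{10}}=1+1004z+40776z^2+230144z^3+176896z^4$ has two zeros, about $-0.0281$ and $-0.00104$, to the right of it.

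The same failure occurs for elementary splits. Take $\alpha=(N)$ and $\beta=(N-4,4)$ with $N$ large, and write $y=2x$, so that $G_\alpha\propto P_N'(y)$ and $H_\beta\propto D_y^{3}\bigl[(y^2-1)^4P_{N-4}(y)\bigr]$. Mehler--Heine at $y=1$, together with Hurwitz's theorem, gives the following asymptotics:
\begin{itemize}
\item the two largest zeros of $\Gamma_\beta$ are $\sim -s_k/(4N^2)$, where $s_1\approx 2.1$ and $s_2\approx 13.1$ are the first zeros of $s^{-1}D_s^{3}\bigl[s^4J_0(\sqrt s)\bigr]$;
\item the largest zero of $\Gamma_\alpha$ is $\sim -j_{1,1}^2/(4N^2)\approx -14.7/(4N^2)$.
\end{itemize}
So for large $N$ the second-largest zero of $\Gamma_\beta$ lies to the right of every zero of $\Gamma_\alpha$, and no $R$ with your interlacing property can exist.

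Your proposed induction cannot repair this. A split of a part into $(a',b')$ with $a'\ge 2$ changes $a'$ consecutive factor parameters. At each such factor, with $a=a_d(\alpha)>b=a_d(\beta)$, the difference evolves as $\Delta_d=\Phi_{d,b}(\Delta_{d-1})+(1/b-1/a)\,\mathcal D_d\bigl(\Gamma_\alpha^{[d-1]}\bigr)$. This is not the image of an interlacing pair under a single operator, so nothing makes interlacing survive. This recursion does, however, settle the coefficient half of the theorem, since both terms are coefficientwise nonnegative.

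The missing idea for the zeros is to compare the two chains factor by factor using only coordinatewise order. The paper shows that the zeros $\eta_i(\boldsymbol\xi,a)$ of $\Phi_{d,a}\bigl(\prod_j(z-\xi_j)\bigr)$ are strictly increasing in each $\xi_j$ and strictly decreasing in $a$. It does this by implicit differentiation of $\mathcal W(z,\boldsymbol\xi,a)=a+2dz+z(1-4z)\sum_j(z-\xi_j)^{-1}$, which satisfies $\mathcal W_z>0$, $\mathcal W_{\xi_j}<0$ and $\mathcal W_a=1$ at every zero. Combined with $a_d(\beta)\le a_d(\alpha)$ for all $d$, this carries $\boldsymbol\xi^\alpha\le\boldsymbol\xi^\beta$ through every factor. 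Strictness comes from any new cut $c>1$, which gives $a_{d_0}(\beta)=1<a_{d_0}(\alpha)$ at $d_0=c-1$, and strict order then persists to the end. Coordinatewise order is preserved under composition of these factors; interlacing is not.
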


For example, the refinements $(1,3)$ and $(2,2)$ of $(4)$ give
\[
\Gamma_{(4)}(z)=\Gamma_{(1,3)}(z)=1+3z,
\qquad \Gamma_{(2,2)}(z)=1+5z.
\]
Let $\mathbf{1}_k=(1,\ldots,1)$ have $k$ parts, with
$\mathbf{1}_0$ empty. Every composition of $N$ lies between
$(N)$ and $\mathbf{1}_N$ in refinement order. The corresponding
Narayana and Eulerian polynomials are
\[
\mathrm{Nar}_N(t)=\frac1N\sum_{j=0}^{N-1}
\binom Nj\binom N{j+1}t^j,
\qquad
\mathrm{Eul}_N(t)=\sum_{j=0}^{N-1}
\left\langle\begin{matrix}N\\ j\end{matrix}\right\rangle t^j,
\]
where the Eulerian number counts permutations of $\{1,\ldots,N\}$
with $j$ descents. Theorem~\ref{thm:refinement} gives
\[
\mathrm{Nar}_N(t)\preceq_\gamma P_M(t)
\preceq_\gamma\mathrm{Eul}_N(t)
\]
and the analogous bounds on ordered $\gamma$-zeros.
Corollary~\ref{cor:sandwich} gives the equality cases and
ordinary coefficient bounds.

We next study the coefficient transition matrices for insertion of a
maximal-label block. A matrix is totally nonnegative if every square
minor is nonnegative, and totally positive if every square minor is positive.
The Aissen--Schoenberg--Whitney theorem~\cite{ASW52} characterizes
real-rootedness of a nonzero polynomial with nonnegative coefficients
by total nonnegativity of the infinite Toeplitz matrix of its coefficients.
Bidiagonal factorizations provide a standard method for proving total
nonnegativity~\cite[Chapter~2]{FallatJohnson11}, and
Brenti~\cite{Brenti95} discusses combinatorial applications of total positivity.

In our setting, the first-order factors act on coefficients by
bidiagonal matrices with positive entries on both diagonals. Their
product is the transition matrix in the recurrence
of Lin, Xu, and Zhao~\cite[Theorem~4.1]{LXZ22}, leading to our
last two results.
For a nonempty multiset $M$ of size $p$, write
\[
P_M(t)=\sum_{k=1}^{p}c_{M,k}t^{k-1},
\qquad c_{M,k}=\bigl|\{T\in\T_M:\Int(T)=k\}\bigr|.
\]
Adjoin $r\ge1$ copies of a new maximal label $a$, giving
$M^{(r)}=M\uplus\{a^r\}$. For a fixed $T\in\T_M$ with
$l$ internal nodes, let $K_{k,l}^{(p,r)}$ count trees in
$\T_{M^{(r)}}$ with $k$ internal nodes that reduce to $T$
after deletion of all $a$-nodes.
Section~\ref{sec:transitions} shows that this number depends
only on $p,r,k,l$. The resulting $(p+r)\times p$ matrix
$K^{(p,r)}$ satisfies
\[
c_{M^{(r)},k}=\sum_{l=1}^{p}c_{M,l}K_{k,l}^{(p,r)}.
\]

\begin{theorem}\label{thm:intro-TN}
For $p,r\ge1$, the matrix $K^{(p,r)}$ is totally nonnegative.
\end{theorem}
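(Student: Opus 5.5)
The plan is to realize $K^{(p,r)}$ as a product of bidiagonal matrices with nonnegative entries and then apply the Cauchy--Binet formula. By the classical fact recorded in \cite[Chapter~2]{FallatJohnson11}, a product of totally nonnegative matrices is totally nonnegative, and a lower bidiagonal matrix with nonnegative entries is totally nonnegative: every square minor of a bidiagonal matrix is either zero or a product of entries. So it suffices to exhibit such a factorization.

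\emph{Step 1: the generating-function form of $K^{(p,r)}$.} Fix $T\in\T_M$ with $l$ internal nodes. Insert $r$ copies of the maximal label $a$. Since $a$ is maximal, every $a$-node lies in an ordered forest of $a$-labelled nodes hanging off some vertex of $T$. Such a forest sits after the existing children of that vertex, because siblings weakly increase. The $a$-nodes themselves form plane forests. The number of internal nodes of the new tree is $k=l'+(\text{internal }a\text{-nodes})$, where $l'$ is $l$ plus the number of leaves of $T$ that received a nonempty forest. Thus $\sum_k K^{(p,r)}_{k,l}t^{k}$ is the coefficient of $x^r$ in
\[
t^{l}\,F(x,t)^{l}\,\bigl(1+t(F(x,t)-1)\bigr)^{p+1-l},
\]
where $F$ is the generating function of ordered forests of $a$-nodes marked by size and internal nodes, a Narayana-type series. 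This confirms that the entries depend only on $p,r,k,l$.

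\emph{Step 2: a first-order differential operator.} Using the M\"obius substitution and Lagrange--B\"urmann inversion announced after Theorem~\ref{thm:main-real-rooted}, I would write the map $t^{l}\mapsto\sum_k K_{k,l}t^k$ as a linear operator $\mathcal K_{p,r}$ on polynomials. The paper's differential formula expresses it as a composition of $r$ (or $p+r-1$) first-order operators of the form
\[
f\mapsto \bigl(\lambda+\mu t\bigr)f+\nu\,t(1+t)f' ,
\]
with $\lambda,\mu,\nu\ge0$ depending on the current size. These are the factors used in the proof of Theorem~\ref{thm:main-real-rooted} via Rolle's theorem.

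\emph{Step 3: bidiagonal matrices.} Each such operator sends $t^{j}$ to $(\lambda+\nu j)t^{j}+(\mu+\nu j)t^{j+1}$. In the monomial basis it is therefore a lower bidiagonal matrix with nonnegative (in fact positive) entries on both diagonals. Composing these gives $K^{(p,r)}$, after truncating to the $(p+r)\times p$ rectangle; truncating rows and columns of a product of bidiagonal matrices is harmless, since we only take submatrices of a TN matrix. Cauchy--Binet then finishes the proof.

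\emph{Main obstacle.} The hard step is Step 2: proving that the forest substitution really factors into first-order operators with \emph{nonnegative} coefficients acting on degree-$l$ monomials uniformly in $l$, rather than just on $P_M$. If the differential formula is only stated for $P_M$, I would first try to check the factorization on the basis $t^{l}(1+t)^{p+1-l}$ suggested by Step 1. Alternatively, I would induct on $r$ by showing that $K^{(p,r+1)}=B\,K^{(p,r)}$ for one explicit bidiagonal $B$, which is the insertion of one further copy of $a$. I would verify this identity by comparing the Lin--Xu--Zhao recurrence~\cite[Theorem~4.1]{LXZ22} entrywise.
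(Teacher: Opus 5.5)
Your overall strategy matches the paper's: write $K^{(p,r)}$ as a product of bidiagonal matrices with positive entries, note that each factor is totally nonnegative because the support of any square submatrix has at most one perfect matching, and conclude with Cauchy--Binet. The gap is your Step~2. It carries all the content, you leave it open, and the form you give for it is wrong.

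The factors are $\E_{d,a}(f(t))=\frac1a\bigl(t(1-t)f'(t)+(a+(d+a)t)f(t)\bigr)$, which send $t^j$ to $\frac{j+a}{a}t^j+\frac{d+a-j}{a}t^{j+1}$. The $t^2f'$ term enters with a negative sign. The subdiagonal entries are positive because of the degree bound $j\le d$, not because the operator has nonnegative coefficients. So you must use the finite $(d+2)\times(d+1)$ matrices of $\E_{p-1,1}$ followed by $\E_{d,p+r-d}$ for $d=p,\ldots,p+r-2$. Their product is exactly $(p+r)\times p$. Your truncation argument is not available, since the infinite matrix of $\E_{d,a}$ has negative entries once $j>d+a$.

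More seriously, the block formula $P_{M^{(r)}}(t)=\mathcal B_{p,r}(P_M(t))$ cannot by itself identify $K^{(p,r)}$ with the matrix of $\mathcal B_{p,r}$. The coefficient vectors of the $P_M(t)$ are palindromic, so they span a space of dimension at most $\lceil p/2\rceil$. The paper proves the identity on each monomial in Lemma~\ref{lem:operator-transition-matrix}. With $U=2x-1$, $V=2x+1$, and $L=p+1-l$, one has $Q(x)\mathscr C_{p-1}(t^{l-1})(x)=2^{-(p+1)}U^LV^l$. Apply $\frac1{r!}(d/dx)^{r-1}\bigl(Q(x)^r\frac{d}{dx}(\cdot)\bigr)$ with $Q(x)=UV/4$ and expand by Leibniz's rule. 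The coefficient of $U^{L+r-s}V^{l+s}$ is then exactly the Lagrange-inversion expression~\eqref{eq:transition-sum} for $K^{(p,r)}_{l+s,l}$. Proposition~\ref{prop:differential-factorization} and Lemma~\ref{lem:A-E-conjugacy} turn this operator into the composition of the $\E$-factors. The missing idea is this basis, in which monomials in $t$ become products of powers of the two linear factors of $Q(x)$. It is not clear how your suggested basis $t^l(1+t)^{p+1-l}$ would yield the factorization.

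Your other fallback fails: in general there is no bidiagonal $B$ with $K^{(p,r+1)}=BK^{(p,r)}$. The factor acting at degree $d$ has parameter $p+r-d$, so every factor after $\E_{p-1,1}$ changes with $r$. Also, adding one more copy of the current top label is not an insertion of a new maximal label, which is what the forest decomposition handles.

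Concretely, take $p=3$ and $r=2$. By Example~\ref{ex:transition-matrix}, row~$3$ of $BK^{(3,2)}$ is $B_{32}(7,3,0)+B_{33}(6,8,6)$. By~\eqref{eq:transition-positive}, row~$3$ of $K^{(3,3)}$ is $(25,20,10)$. The last entry forces $B_{33}=5/3$ and the first then forces $B_{32}=15/7$, but then the middle entry is $415/21\ne20$. So no entrywise comparison with the recurrence of Lin, Xu, and Zhao can verify this identity.
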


We count additional internal nodes with the $l$th column polynomial
\[
R_{p,l,r}(t)=\sum_{s=0}^{r}K_{l+s,l}^{(p,r)}t^s
\qquad(1\le l\le p).
\]
Thus $t^{l-1}R_{p,l,r}(t)$ is the generating polynomial of the $l$th
column in the monomial basis.
Throughout, interlacing includes the equal-degree case; it is strict
when all inequalities in the ordering of the zeros are strict.

\begin{theorem}\label{thm:intro-columns}
For $p,r\ge1$, each column polynomial $R_{p,l,r}(t)$ has $r$ simple
negative zeros, and adjacent column polynomials have strictly
interlacing zeros.
\end{theorem}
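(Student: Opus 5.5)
The plan is to prove the statement by induction on $r$. I will use the factorization of $K^{(p,r)}$ into bidiagonal factors from Section~\ref{sec:transitions}, the same one behind Theorem~\ref{thm:intro-TN}, and read each factor as a first-order differential operator acting on column polynomials.

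\textbf{Base case $r=1$.} A single new label $a$ can only be attached as the rightmost child of some vertex of $T$, and there are $p+1$ vertices.
\begin{itemize}
\item If the vertex is internal, the count $\Int$ is unchanged; there are $l$ such vertices.
\item If the vertex is a leaf, it becomes internal; there are $p+1-l$ such vertices.
\end{itemize}
Hence
\[
R_{p,l,1}(t)=l+(p+1-l)t .
\]
For $1\le l\le p$ this has the single negative zero $-l/(p+1-l)$. These zeros strictly decrease in $l$, so adjacent columns strictly interlace, trivially in degree one.

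\textbf{Inductive step: the operator.} Write $K^{(p,r)}=B\,K^{(p,r-1)}$, where $B$ is the lower bidiagonal factor for inserting the last copy of $a$. By the forest decomposition, $B$ should be the $(p+r)\times(p+r-1)$ matrix with diagonal entries $k$ and subdiagonal entries $p+r-k$ in row $k+1$. This is the $r=1$ count applied to a tree with $p+r-1$ non-root nodes; it requires inserting the new copy as a rightmost child, with weak increase making the copies interchangeable. Column $l$ of $K^{(p,r)}$ is then $B$ applied to column $l$ of $K^{(p,r-1)}$.

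In the monomial basis, with $f(t)=t^{l-1}R_{p,l,r-1}(t)$ of degree at most $p+r-2$, this gives
\[
t^{l-1}R_{p,l,r}(t)=\bigl(1+(p+r)t\bigr)f(t)+t(1-t)\,f'(t)
\;=\;\Phi_{p+r}f .
\]
Here I use $t^{k}\mapsto (k+1)t^k+(p+r-1-k)t^{k+1}$, which I will check against the recurrence of Lin, Xu, and Zhao.

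\textbf{Inductive step: real-rootedness and interlacing.} Write $\Phi_{n}f=(1-t)^{-n+1}\,t\,\frac{d}{dt}\bigl[(1-t)^{n}\,t^{-1}\cdots\bigr]$ after a suitable integrating factor; concretely,
\[
\Phi_n f=t^{0}(1-t)^{-n+1}\frac{d}{dt}\Bigl(t\,(1-t)^{n-1}f\Bigr)\cdot\frac{1}{1}
\]
up to the exact exponents, which I will fix. Rolle's theorem applied on $(-\infty,0)$ then shows the following. If $f$ has simple negative zeros plus a zero of order $l-1$ at $0$, then $\Phi_n f$ gains exactly one new negative zero, which lies to the left of all zeros of $f$. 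Between consecutive zeros of $f$ it has exactly one zero, by a degree count. This gives simplicity and negativity of the $r$ zeros of $R_{p,l,r}$.

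For interlacing, I will use the standard fact (Wagner; Brändén) that an operator of the form
\[
f\mapsto (\alpha+\beta t)f+\gamma\,t(1-t)f'
\]
which preserves real-rootedness on the relevant class also preserves interlacing. Equivalently, the Hermite--Kakeya--Obreschkoff criterion applies: $g$ and $h$ interlace if and only if $c g+d h$ is real-rooted for all real $c,d$. Since $\Phi_n$ is linear, $c\,\Phi_n f+d\,\Phi_n g=\Phi_n(cf+dg)$. Strictness will follow from the Rolle argument, because simple zeros are mapped to separated simple zeros.

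\textbf{Main obstacle.} The column shift is the delicate point. $f=t^{l-1}R_{p,l}$ and $g=t^{l}R_{p,l+1}$ have different multiplicities at $0$, so the Obreschkoff argument must be run on $t^{l-1}R_{p,l}$ versus $t^{l-1}\cdot tR_{p,l+1}$. I must check that $R_{p,l}$ and $tR_{p,l+1}$ interlace in the correct direction, with the zeros of $R_{p,l+1}$ to the left. That direction is what the base case $-(l+1)/(p-l)<-l/(p+1-l)$ provides. If the general preservation theorem does not directly cover the zero at the origin, I will instead run an explicit sign-alternation count: evaluate $\Phi_n f$ and $\Phi_n g$ at the zeros of the other polynomial and use the differential form to determine the signs.
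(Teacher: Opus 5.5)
\textbf{Gap: the recursion $K^{(p,r)}=B\,K^{(p,r-1)}$ is false.} Inserting the $r$ copies of $a$ one at a time as rightmost children is not a bijection onto $\T_{M^{(r)}}$. The copies are interchangeable, so the same tree arises from several insertion orders. For $M=\{1\}$ and $r=2$, the tree in which both the root and the node $1$ carry an $a$-child is produced twice. Your product $K^{(p+r-1,1)}\cdots K^{(p,1)}$ is the transition matrix for adjoining distinct labels $a_1<\cdots<a_r$, not $a^r$. Concretely,
\[
K^{(2,1)}K^{(1,1)}=\begin{pmatrix}1&0\\2&2\\0&1\end{pmatrix}\begin{pmatrix}1\\1\end{pmatrix}=\begin{pmatrix}1\\4\\1\end{pmatrix},
\qquad\text{whereas}\qquad R_{1,1,2}(t)=1+3t+t^2 .
\]
This is $P_{(1,1,1)}$ versus $P_{(1,2)}$ in Table~\ref{tab:small-compositions}.

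The correct factorization, from \eqref{eq:block-univ} and Lemma~\ref{lem:operator-transition-matrix}, is
\[
\mathcal B_{p,r}=\E_{p+r-2,2}\circ\cdots\circ\E_{p,r}\circ\E_{p-1,1},
\]
whose parameters $1,r,r-1,\ldots,2$ depend on $r$. So $\mathcal B_{p,r}$ is not one first-order factor composed with $\mathcal B_{p,r-1}$, and the intermediate polynomials are not column polynomials of any $K^{(p,r')}$. An induction on $r$ with hypothesis ``$R_{p,l,r-1}$ and $R_{p,l+1,r-1}$ interlace'' therefore has nothing to feed on. Separately, your displayed $\Phi_{p+r}$ is off by one relative to your own monomial rule; the single-insertion factor is $\E_{p+r-2,1}$.

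\textbf{Repair.} Induct over the $r$ factors of $\mathcal B_{p,r}$ rather than over $r$.

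For real-rootedness, apply Lemma~\ref{lem:E-root-insertion} to $t^{l-1}h(t)$, starting from $h=1$. Note that this lemma also places a new zero in $(\rho_{\max},0)$, which your Rolle count omits.

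For interlacing, the invariant must be carried by the whole pencil $\psi(t)=t^{l-1}(\lambda+\mu t)$. You need that every nonzero member of its image under each successive factor has only real simple zeros, with controlled order at $t=0$ and $t=1$. Your tools do not give this as stated:
\begin{itemize}
\item A Rolle argument on $(-\infty,0)$ cannot do it, because members with $\lambda\mu<0$ can have nonnegative zeros. For instance, $R_{p,l,r}(t)-tR_{p,l+1,r}(t)$ vanishes at $t=1$, since $R_{p,l,r}(1)$ is independent of $l$.
\item A general ``real-rootedness preservers preserve interlacing'' theorem yields only weak interlacing. Strictness, via Obreschkoff--Dedieu, requires simple zeros for every member of the pencil.
\end{itemize}
The paper obtains this by conjugating to $x$, applying Rolle to $Q(x)^a g(x)$ on all of $\mathbb R$ (Lemma~\ref{lem:column-pencil-preservation}), and pulling back by the M\"obius map. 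With these changes your outline, including the correct choice of pencil $t^{l-1}(\lambda+\mu t)$, becomes the proof of Theorem~\ref{thm:strict-columns}.
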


This paper is organized as follows.
In Section~\ref{sec:forest}, we derive the differential formula from
the forest decomposition, prove real-rootedness, and obtain the
first-order factorization. In Section~\ref{sec:refinement}, we use this
factorization to compare the $\gamma$-coefficients and zeros under
refinement and obtain bounds from the Narayana and Eulerian polynomials.
In Section~\ref{sec:transitions}, we study the coefficient transition
matrices for block insertion, determine their positive minors, and
prove strict interlacing of the zeros of adjacent column polynomials.
In Section~\ref{sec:final-remarks}, we discuss the limitations of these
comparisons and a related conjecture for multiset Schett polynomials.

\section{Forest decomposition and real-rootedness}\label{sec:forest}

In this section, we prove Theorem~\ref{thm:main-real-rooted} using a
differential formula for maximal-label insertion. We derive this formula by decomposing
a tree into a smaller tree and attached forests and then use Rolle's
theorem to locate the zeros. We also factor the differential operator
into first-order factors, which we use in the next two sections to
study refinement and coefficient transition matrices.

\subsection{Forest decomposition and recurrence}

Write $p=|M|$ for the size before insertion and $v=p+1$ for the
number of tree vertices. The identity $\Leaf(T)+\Int(T)=v$ and
palindromicity give, for nonempty $M$,
\begin{equation}\label{eq:leaf-internal}
A_M(t)=t^pP_M(1/t).
\end{equation}
We also allow the empty multiset $M=\varnothing$, whose tree is a
single root with $A_\varnothing(t)=t$. Let $a$ exceed every label of $M$, and put
$M^{(r)}=M\uplus\{a^r\}$ for $r\ge0$.
An $a$-tree is a plane tree in which every node carries the label $a$.
Let $\mathcal U(u,t)\in\mathbb Q[t][[u]]$ count nonempty $a$-trees,
with $u$ marking nodes and $t$ marking leaves.  The root is either a
leaf or has a nonempty ordered sequence of subtrees, so
\begin{equation}\label{eq:pure-tree-leaf}
\mathcal U(u,t)=u\left(t+\frac{\mathcal U(u,t)}{1-\mathcal U(u,t)}\right).
\end{equation}
The solution has zero constant term. Ordered forests have generating
function $\mathcal F(u,t)=(1-\mathcal U(u,t))^{-1}$, so
\begin{equation}\label{eq:forest-leaf}
\mathcal F(u,t)-1=u\mathcal F(u,t)\bigl(t+\mathcal F(u,t)-1\bigr).
\end{equation}

\begin{lemma}\label{lem:maximal-forest-decomposition}
For every $r\ge0$, deleting all nodes labeled $a$ gives a bijection between
$\T_{M^{(r)}}$ and pairs consisting of a base tree $T\in\T_M$ together with,
at each node of $T$, a possibly empty ordered forest of $a$-trees, with a total
of $r$ new nodes.
\end{lemma}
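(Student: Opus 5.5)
We construct the deletion map and its inverse explicitly, then check that each is well defined and that they invert each other. Since $a$ exceeds every label of $M$ and labels weakly increase along root-to-leaf paths, no $a$-node can have a descendant with a label other than $a$. So in any $S\in\T_{M^{(r)}}$, the $a$-nodes form a union of subtrees hanging from non-$a$ nodes; every node in these subtrees carries label $a$. The sibling condition (labels weakly increasing from left to right) then forces all $a$-children of a given non-$a$ node $x$ to appear after all its non-$a$ children. Hence the $a$-children of $x$, together with their descendants, form one ordered forest $F_x$ of $a$-trees, placed at the right end of the child list of $x$.

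Define $\Phi(S)=(T,(F_x)_{x\in T})$, where $T$ is $S$ with all $a$-nodes deleted. Deleting a terminal block of children and their subtrees leaves a plane tree on $M\uplus\{0\}$ rooted at $0$. It still satisfies both monotonicity conditions, because root-to-leaf paths of $T$ are paths of $S$ restricted to non-$a$ nodes and sibling orders are inherited. So $T\in\T_M$, and the forests together contain exactly $r$ nodes. The case $M=\varnothing$ also fits: $T$ is the single root $0$, and $F_0$ carries everything.

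For the inverse, take $T\in\T_M$ and forests $F_x$ of $a$-trees, one for each node $x$ of $T$, with $r$ nodes in total. Append the roots of $F_x$, in order, as the rightmost children of $x$. The result is a plane tree on $M\uplus\{0\}\uplus\{a^r\}$ rooted at $0$. Labels weakly increase along every path, since each $a$ lies at or below $a$-nodes or below a node of smaller label. Labels weakly increase among siblings, since the non-$a$ siblings keep their order from $T$ and are followed by $a$'s. Among $a$-siblings inside an $a$-tree there is nothing to check. So the result lies in $\T_{M^{(r)}}$.

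The two maps are mutually inverse because of the forced placement: the $a$-block at each $x$ is exactly what we removed, in the same position and order. The only real point to verify carefully is that forced placement, namely that the $a$-nodes form a terminal segment of each child list and never sit above a non-$a$ node. Everything else is bookkeeping.
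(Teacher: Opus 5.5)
Your proposal is correct and takes the same approach as the paper's proof. Both arguments rest on two observations: no $M$-labeled node can lie below an $a$-node, and sibling monotonicity forces the $a$-children of each base node to the right end of its child list. The inverse then appends the forests there, and both monotonicity conditions are preserved because $a$ exceeds every label of $M$.
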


\begin{proof}
Delete the nodes labeled $a$ from $T'\in\T_{M^{(r)}}$.
A node with a label in $M$ cannot descend from an $a$-node, since labels
weakly increase along paths.  The remaining nodes therefore form a
connected tree $T\in\T_M$.  At each node of $T$, the deleted children
occupy the right end of the child list, and all their descendants carry
the label $a$, so they form an ordered forest of $a$-trees.

Conversely, attach any such forest to the right end of the child list
at each node of $T$.  Since $a$ exceeds every label in $M$, this
construction preserves both monotonicity conditions and reverses the
deletion.
\end{proof}

Figure~\ref{fig:maximal-forest-decomposition} shows both the deletion
and the forests attached to each vertex of the base tree.

% Requires \usepackage{tikz}; no additional TikZ libraries are needed.
% All coordinates are in centimetres.  The diagram is about 12 cm wide.
\begin{figure}[tbp]
\centering
\begin{tikzpicture}[
  x=1cm,y=1cm,
  every node/.style={font=\small},
  base/.style={circle,draw,fill=white,inner sep=0pt,minimum size=5.5mm},
  added/.style={base,fill=black!15},
  edge/.style={line width=.45pt},
  vertexname/.style={font=\scriptsize,inner sep=1pt}
]
  % The full tree.  Each child's horizontal position fixes its plane order.
  \node at (2.2,.85) {$T'$};
  \node[base] (t0) at (2.0,0) {$0$};
  \node[base] (t1) at (.9,-1.15) {$1$};
  \node[base] (t2) at (3.2,-1.15) {$1$};
  \node[base] (t3) at (.1,-2.3) {$2$};
  \node[added] (a0) at (4.4,-1.15) {$3$};
  \node[added] (a1) at (1.0,-2.3) {$3$};
  \node[added] (a2) at (1.9,-2.3) {$3$};
  \node[added] (a3) at (3.2,-2.3) {$3$};
  \node[added] (a4) at (3.2,-3.25) {$3$};
  \foreach \parent/\child in {t0/t1,t0/t2,t0/a0,t1/t3,t1/a1,t1/a2,t2/a3,a3/a4}
    \draw[edge] (\parent) -- (\child);
  \node[vertexname,above left] at (t0.north west) {$v_0$};
  \node[vertexname,left] at (t1.west) {$v_1$};
  \node[vertexname,right] at (t2.east) {$v_2$};
  \node[vertexname,left] at (t3.west) {$v_3$};

  \draw[->,line width=.55pt] (5.0,-1.15) -- (7.3,-1.15)
    node[midway,above,align=center] {delete all\\$3$-nodes};

  % The base tree; the vertex names distinguish the two nodes labelled 1.
  \node at (9.5,.85) {$T$};
  \node[base] (b0) at (9.5,0) {$0$};
  \node[base] (b1) at (8.7,-1.15) {$1$};
  \node[base] (b2) at (10.3,-1.15) {$1$};
  \node[base] (b3) at (8.7,-2.3) {$2$};
  \foreach \parent/\child in {b0/b1,b0/b2,b1/b3}
    \draw[edge] (\parent) -- (\child);
  \node[vertexname,above left] at (b0.north west) {$v_0$};
  \node[vertexname,left] at (b1.west) {$v_1$};
  \node[vertexname,right] at (b2.east) {$v_2$};
  \node[vertexname,left] at (b3.west) {$v_3$};

  % All four attached forests, including the empty forest, in vertex order.
  \node at (1.15,-4.05) {At $v_0$};
  \node at (4.2,-4.05) {At $v_1$};
  \node at (7.25,-4.05) {At $v_2$};
  \node at (10.3,-4.05) {At $v_3$};
  \node[added] at (1.15,-4.75) {$3$};
  \node[added] at (3.75,-4.75) {$3$};
  \node[added] at (4.65,-4.75) {$3$};
  \node[added] (c0) at (7.25,-4.75) {$3$};
  \node[added] (c1) at (7.25,-5.7) {$3$};
  \draw[edge] (c0) -- (c1);
  \node at (10.3,-4.75) {$\varnothing$};
\end{tikzpicture}
\caption{The decomposition in Lemma~\ref{lem:maximal-forest-decomposition}
for $M=\{1^2,2\}$ and $a=3$. The shaded nodes form the attached
forests shown below, including the empty forest at $v_3$. Vertex names
distinguish the two nodes labeled $1$.}
\label{fig:maximal-forest-decomposition}
\end{figure}

\begin{proposition}\label{prop:maximal-block}
For every finite multiset $M$,
\begin{equation}\label{eq:maximal-block}
\sum_{r\ge0}u^r A_{M^{(r)}}(t)
=\mathcal F(u,t)^v
A_M\left(1+\frac{t-1}{\mathcal F(u,t)}\right).
\end{equation}
\end{proposition}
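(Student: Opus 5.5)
We track how leaves of the base tree change when forests are attached, and then sum over all base trees using Lemma~\ref{lem:maximal-forest-decomposition}.

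Fix a base tree $T\in\T_M$ with $v=p+1$ vertices. Attach a forest to every vertex. The leaves of the resulting tree $T'$ are of two kinds:
\begin{itemize}
\item the leaves of the attached $a$-trees, which $\mathcal F(u,t)$ already records with weight $t$;
\item the vertices of $T$ that were leaves of $T$ and received an empty forest.
\end{itemize}
An internal vertex of $T$ stays internal no matter which forest it receives. So an internal vertex of $T$ contributes the full forest series $\mathcal F$. A leaf of $T$ contributes $t$ (empty forest, still a leaf) plus $\mathcal F-1$ (nonempty forest, now internal). Here $u$ marks the new nodes and the choices at different vertices are independent. Therefore the contribution of $T$ is
\[
\mathcal F^{\Int(T)}\,(t+\mathcal F-1)^{\Leaf(T)}
=\mathcal F^{v}\left(\frac{t+\mathcal F-1}{\mathcal F}\right)^{\Leaf(T)}
=\mathcal F^{v}\left(1+\frac{t-1}{\mathcal F}\right)^{\Leaf(T)},
\]
using $\Int(T)+\Leaf(T)=v$.

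By Lemma~\ref{lem:maximal-forest-decomposition}, deleting the $a$-nodes is a bijection onto pairs (base tree, forests) with $r$ new nodes in total. Summing the contribution above over $T\in\T_M$ and extracting $u^r$ on both sides gives \eqref{eq:maximal-block} coefficientwise. Note that $\Leaf(T')$ for the combined tree is exactly the count described above, since the $a$-nodes are the only new nodes.

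The one point requiring care is the leaf/internal convention. The root of $T$ counts as internal for nonempty $M$, so it always receives the factor $\mathcal F$. For $M=\varnothing$ the root is itself a leaf with $A_\varnothing(t)=t$, and it becomes internal exactly when it receives a nonempty forest. The formula then reads $\mathcal F\cdot\bigl(1+(t-1)/\mathcal F\bigr)=t+\mathcal F-1$, which is consistent with the convention.

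We also need to check that the identity is meaningful as a power series in $u$. Since $\mathcal F=1+O(u)$, it is invertible in $\mathbb Q[t][[u]]$, so $1+(t-1)/\mathcal F$ is a legitimate series. As $A_M$ is a polynomial, substituting this series into $A_M$ is well defined.
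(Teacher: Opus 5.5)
Your proposal is correct and follows the paper's proof. Each base tree gets weight $\mathcal F$ at every internal vertex and $t+\mathcal F-1$ at every leaf, you rewrite the product using $\Int(T)+\Leaf(T)=v$, and you sum over $T\in\T_M$ via Lemma~\ref{lem:maximal-forest-decomposition}; your separate checks of the case $M=\varnothing$ and of well-definedness (invertibility of $\mathcal F=1+O(u)$) are valid additions that the paper leaves implicit.
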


\begin{proof}
By Lemma~\ref{lem:maximal-forest-decomposition}, we need only count
leaves after we attach the forests.  Suppose the base tree $T$ has
$\ell$ leaves. The variable $u$ counts only the new $a$-nodes,
so the nodes of the base tree contribute no power of $u$.
Each nonleaf contributes $\mathcal F(u,t)$. At a leaf,
the empty forest contributes $t$, while a nonempty forest makes the
base node internal and contributes $\mathcal F(u,t)-1$.  Thus the
extensions of $T$ have total weight
\[
\mathcal F(u,t)^{v-\ell}\bigl(t+\mathcal F(u,t)-1\bigr)^\ell
=\mathcal F(u,t)^v
\left(1+\frac{t-1}{\mathcal F(u,t)}\right)^\ell.
\]
Summing over $T\in\T_M$ proves~\eqref{eq:maximal-block}.
\end{proof}

\subsection{The differential formula and real-rootedness}

The following change of variables turns~\eqref{eq:maximal-block}
into a formula involving derivatives. Set
\[
\mu(x)=\frac{2x-1}{2x+1}.
\]
For any finite multiset $M$, put $v_M=|M|+1$ and define
\begin{equation}\label{eq:H-def}
H_M(x)=2^{-v_M}(2x+1)^{v_M}A_M(\mu(x)).
\end{equation}
This is a polynomial of degree $v_M$ with positive leading
coefficient $A_M(1)=|\T_M|$.

\begin{lemma}\label{lem:mobius-block}
Let $X=X(x,\eta)\in\mathbb Q(x)[[\eta]]$ be the unique formal solution of
\begin{equation}\label{eq:X-Lagrange}
X=x+\eta\left(X^2-\frac14\right),
\qquad X(x,0)=x.
\end{equation}
Then
\begin{equation}\label{eq:H-block-series}
\sum_{r\ge0}\eta^rH_{M^{(r)}}(x)=H_M(X).
\end{equation}
\end{lemma}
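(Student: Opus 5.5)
The plan is to obtain \eqref{eq:H-block-series} directly from Proposition~\ref{prop:maximal-block} by substituting $t=\mu(x)$ and a rescaled $u$, and then to identify the forest series $\mathcal F$ with an explicit expression in $X$.

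\emph{Left-hand side.} Since $v_{M^{(r)}}=v+r$ with $v=v_M$, we have
\[
\sum_{r\ge0}\eta^rH_{M^{(r)}}(x)=2^{-v}(2x+1)^v\sum_{r\ge0}\Bigl(\tfrac{\eta(2x+1)}{2}\Bigr)^rA_{M^{(r)}}(\mu(x)).
\]
This suggests setting $u=\eta(2x+1)/2$ and $t=\mu(x)$ in \eqref{eq:maximal-block}. Both substitutions are legitimate in $\mathbb Q(x)[[\eta]]$, because $u$ has zero constant term in $\eta$. After substitution, the left-hand side equals
\[
2^{-v}(2x+1)^v\,\mathcal F^v A_M\bigl(1+(t-1)/\mathcal F\bigr).
\]

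\emph{Matching with $H_M(X)$.} The target is $H_M(X)=2^{-v}(2X+1)^vA_M(\mu(X))$, so I would aim to prove
\[
\mathcal F\bigl(\eta(2x+1)/2,\ \mu(x)\bigr)=\frac{2X+1}{2x+1}.
\]
Granting this, the prefactors agree, since $(2x+1)^v\mathcal F^v=(2X+1)^v$. The arguments of $A_M$ also agree: $t-1=-2/(2x+1)$, so
\[
1+\frac{t-1}{\mathcal F}=1-\frac{2}{2X+1}=\frac{2X-1}{2X+1}=\mu(X).
\]
This would prove \eqref{eq:H-block-series}.

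\emph{Identifying $\mathcal F$.} Put $G=(2X+1)/(2x+1)\in\mathbb Q(x)[[\eta]]$, which has constant term $1$. I would check that $G$ satisfies \eqref{eq:forest-leaf} with the substituted variables. By \eqref{eq:X-Lagrange},
\[
G-1=\frac{2(X-x)}{2x+1}=\frac{2\eta(X^2-\tfrac14)}{2x+1}.
\]
On the other side,
\[
t+G-1=G-\frac{2}{2x+1}=\frac{2X-1}{2x+1},
\]
so
\[
uG(t+G-1)=\frac{\eta(2x+1)}2\cdot\frac{(2X+1)(2X-1)}{(2x+1)^2},
\]
which is the same quantity. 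To conclude $G=\mathcal F$, I would use uniqueness: \eqref{eq:forest-leaf} has a unique power-series solution in $u$ with constant term $1$, because comparing coefficients of $u^n$ determines each coefficient recursively from lower ones. After the substitution $u=\eta(2x+1)/2$, the same recursion in powers of $\eta$ determines the solution uniquely, so $G$ equals the substituted $\mathcal F$.

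The only delicate step is this formal justification, namely that composing $\mathcal F(u,t)\in\mathbb Q[t][[u]]$ with $t=\mu(x)$ and $u=\eta(2x+1)/2$ is well defined and preserves uniqueness. Because $u$ is a multiple of $\eta$ with no constant term, each coefficient of $\eta^n$ involves only finitely many terms, so this is routine. Uniqueness of $X$ in \eqref{eq:X-Lagrange} follows by the same coefficient recursion.
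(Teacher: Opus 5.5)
Your proposal is correct and follows the paper's proof. You substitute $t=\mu(x)$ and $u=\eta(2x+1)/2$ into \eqref{eq:maximal-block} and identify $(2x+1)\mathcal F=2X+1$ through \eqref{eq:forest-leaf}. The only difference is direction: the paper defines $X$ from $\mathcal F$, verifies \eqref{eq:X-Lagrange}, and relies on uniqueness of $X$, whereas you define $G=(2X+1)/(2x+1)$ from $X$, verify the substituted forest equation, and rely on uniqueness of its solution, which is an equivalent route.
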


\begin{proof}
In~\eqref{eq:maximal-block}, put $t=\mu(x)$ and define $X$ and $\eta$ by
\[
2X+1=(2x+1)\mathcal F(u,\mu(x)),
\qquad
\eta=\frac{2u}{2x+1}.
\]
Then
\[
1+\frac{\mu(x)-1}{\mathcal F(u,\mu(x))}
=\mu(X).
\]
Using these substitutions in~\eqref{eq:forest-leaf}, we obtain
\eqref{eq:X-Lagrange}.
Multiplying~\eqref{eq:maximal-block} by $2^{-v}(2x+1)^v$ transforms its
right-hand side into $H_M(X)$.  Since
$u^r=\eta^r(2x+1)^r/2^r$, we use~\eqref{eq:H-def} with
$v_{M^{(r)}}=v+r$ to identify the $r$th term on the left as
$\eta^rH_{M^{(r)}}(x)$.  This proves~\eqref{eq:H-block-series}.
\end{proof}

Put
\[
Q(x)=x^2-\frac14.
\]
For a formal power series $f(u)$, we write $[u^k]f(u)$ for the
coefficient of $u^k$.  Lagrange--B\"urmann inversion now gives the
required differential formula.

\begin{proposition}\label{prop:block-differential}
For every $r\ge1$,
\begin{equation}\label{eq:block-differential}
H_{M^{(r)}}(x)
=\frac1{r!}
\left(\frac d{dx}\right)^{r-1}
\bigl(Q(x)^rH_M'(x)\bigr).
\end{equation}
\end{proposition}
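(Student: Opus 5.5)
The plan is to read off the coefficient of $\eta^r$ in Lemma~\ref{lem:mobius-block} with the Lagrange--B\"urmann formula. Equation~\eqref{eq:X-Lagrange} says $X=x+\eta\,Q(X)$, which is the standard Lagrange form $X=x+\eta\,\phi(X)$ with $\phi=Q$. For any function $F$ that is analytic, or merely polynomial, near $x$, the classical formula gives
\[
F(X)=F(x)+\sum_{r\ge1}\frac{\eta^r}{r!}\left(\frac{d}{dx}\right)^{r-1}\bigl(Q(x)^rF'(x)\bigr).
\]
Taking $F=H_M$, which is a polynomial, and comparing with~\eqref{eq:H-block-series} coefficientwise in $\eta$ gives~\eqref{eq:block-differential} for $r\ge1$. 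The $r=0$ term, $H_{M^{(0)}}=H_M$, is consistent with this.

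The one step that needs care is justifying the formula in the present formal setting, where $X\in\mathbb Q(x)[[\eta]]$ (in fact $X\in\mathbb Q[x][[\eta]]$). I will prove it directly rather than cite it. Write $X=x+w$, so that $w=\eta\,Q(x+w)$ and $w$ is a power series in $\eta$ with zero constant term.

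First expand $H_M(X)=\sum_{j\ge0}H_M^{(j)}(x)\,w^j/j!$. This is a finite sum because $H_M$ is a polynomial.

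Next apply the one-variable formal Lagrange inversion in $\eta$, with $x$ treated as a parameter in the coefficient field $\mathbb Q(x)$, to $w=\eta\,\psi(w)$ where $\psi(w)=Q(x+w)$ has $\psi(0)=Q(x)\neq0$. For any $G$, this gives
\[
[\eta^r]\,G(w)=\frac1r\,[w^{r-1}]\,G'(w)\,\psi(w)^r .
\]
Apply it with $G(w)=H_M(x+w)$. Then $G'(w)=H_M'(x+w)$, and $[w^{r-1}]$ of $H_M'(x+w)\,Q(x+w)^r$ equals $\frac1{(r-1)!}\bigl(\frac{d}{dx}\bigr)^{r-1}\bigl(Q(x)^rH_M'(x)\bigr)$, by Taylor's formula in $w$ for the polynomial $y\mapsto Q(y)^rH_M'(y)$ at $y=x$. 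Multiplying by $1/r$ gives the factor $1/r!$, which is the claimed formula.

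The only real obstacle is making sure the hypotheses of formal Lagrange inversion hold over the ring $\mathbb Q(x)$. Uniqueness of $X$ has already been asserted in Lemma~\ref{lem:mobius-block}, and the inversion itself is a purely algebraic identity valid over any field of characteristic zero. So the argument reduces to this bookkeeping, and no further analytic input is needed.
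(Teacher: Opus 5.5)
Your proposal is correct and matches the paper's proof step for step. You set $X=x+w$ so that $w=\eta Q(x+w)$, apply Lagrange--B\"urmann to $H_M(x+w)$, turn $[w^{r-1}]$ into $\frac1{(r-1)!}(d/dx)^{r-1}$ by Taylor's formula, and read off the coefficient from \eqref{eq:H-block-series}; your extra remarks on validity over $\mathbb Q(x)$ are harmless additions, and the preliminary Taylor expansion of $H_M(X)$ in powers of $w$ is not actually needed.
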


\begin{proof}
Write $X=x+Z$.  Then~\eqref{eq:X-Lagrange} becomes
\[
Z=\eta Q(x+Z).
\]
By the Lagrange--B\"urmann formula~\cite[Appendix~A.6]{FlajoletSedgewick09},
for $r\ge1$,
\begin{align*}
[\eta^r]H_M(X)
&=\frac1r[w^{r-1}]H_M'(x+w)Q(x+w)^r\\
&=\frac1{r!}
\left(\frac d{dx}\right)^{r-1}
\bigl(Q(x)^rH_M'(x)\bigr).
\end{align*}
By~\eqref{eq:H-block-series}, the coefficient on the left is
$H_{M^{(r)}}(x)$, so this identity gives the required differential formula.
\end{proof}

For nonempty $M$ of size $p$, define the polynomial
\begin{equation}\label{eq:G-def}
G_M(x)
=2^{-(p-1)}(2x-1)^{p-1}
P_M\left(\frac{2x+1}{2x-1}\right).
\end{equation}
Substituting~\eqref{eq:leaf-internal} into~\eqref{eq:H-def} gives
\begin{equation}\label{eq:H-QG}
H_M(x)=Q(x)G_M(x).
\end{equation}
For $M=\varnothing$, $H_\varnothing(x)=x-\tfrac12$.
Since $P_M(t)$ is monic with constant term $1$, \eqref{eq:G-def} gives
$G_M(\tfrac12)=1$ and $G_M(-\tfrac12)=(-1)^{p-1}$.

\begin{lemma}\label{lem:rolle-multiplicities}
Let $V(x)$ be a nonconstant real polynomial with distinct real zeros
$u_1<\cdots<u_k$ of multiplicities $m_1,\ldots,m_k$, where
$\sum_i m_i=\deg V(x)$. Then $V'(x)$ has multiplicity $m_i-1$ at $u_i$
and one simple zero in each interval $(u_i,u_{i+1})$, and has no
other zeros.
\end{lemma}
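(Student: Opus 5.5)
The plan is to combine the factorization of $V$ over its real zeros with Rolle's theorem and a degree count. Let $n=\deg V\ge1$. Since $\sum_i m_i=n$, the polynomial $V$ has only real zeros, so $V(x)=c\prod_{i=1}^k(x-u_i)^{m_i}$ with $c\neq0$.

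First, handle the zeros $u_i$ themselves. Write $V(x)=(x-u_i)^{m_i}W_i(x)$ with $W_i(u_i)\neq0$. Differentiating gives
\[
V'(x)=(x-u_i)^{m_i-1}\bigl(m_iW_i(x)+(x-u_i)W_i'(x)\bigr),
\]
and the bracket equals $m_iW_i(u_i)\neq0$ at $x=u_i$. Hence $V'(x)$ has multiplicity exactly $m_i-1$ at $u_i$, where multiplicity $0$ means $u_i$ is not a zero of $V'(x)$.

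Second, for each $1\le i<k$, the polynomial $V(x)$ vanishes at $u_i$ and $u_{i+1}$. Rolle's theorem then gives at least one zero of $V'(x)$ in the open interval $(u_i,u_{i+1})$.

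Third, a degree count shows that no other zeros exist. Since $V'(x)$ has degree $n-1$, it has at most $n-1$ zeros counted with multiplicity. The zeros found so far already contribute
\[
\sum_i(m_i-1)+(k-1)=n-k+k-1=n-1.
\]
Therefore each interval $(u_i,u_{i+1})$ contains exactly one zero, and that zero is simple. Also, $V'(x)$ has no further zeros, real or complex; when $n=1$, $V'(x)$ is a nonzero constant and the statement holds vacuously. This counting step is the only point that needs care, because it is what turns the "at least one" from Rolle's theorem into "exactly one, simple, and nothing else." It works because $V(x)$ has full real multiplicity, and I expect no real obstacle beyond it.
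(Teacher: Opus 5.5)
Your proof is correct and follows the paper's argument: multiplicity $m_i-1$ at each $u_i$, one Rolle zero in each interval $(u_i,u_{i+1})$, and the count $\sum_i(m_i-1)+(k-1)=\deg V(x)-1$, which forces each interval zero to be simple and rules out any other zeros. Your explicit factorization $V(x)=(x-u_i)^{m_i}W_i(x)$ only spells out what the paper states in one line as ``differentiation gives multiplicity $m_i-1$.''
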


\begin{proof}
Differentiation gives multiplicity $m_i-1$ at $u_i$, and Rolle's
theorem gives a zero in every intervening interval. These account for
$\sum_i(m_i-1)+(k-1)=\deg V(x)-1$ zeros counted with multiplicity,
so each intervening zero is simple and there are no further zeros.
\end{proof}

We now prove Theorem~\ref{thm:main-real-rooted}.

\begin{proof}[Proof of Theorem~\ref{thm:main-real-rooted}]
We prove by induction on the number of distinct labels that
$H_M(x)$ has simple zeros at $-\tfrac12$ and $\tfrac12$ and
$|M|-1$ simple zeros in between, accounting for its full degree $|M|+1$.

For one distinct label, $M=\{1^r\}$ with $r\ge1$.
For this initial block, Proposition~\ref{prop:block-differential} gives
$H_{\{1^r\}}(x)=(r!)^{-1}(d/dx)^{r-1}Q(x)^r$.
Lemma~\ref{lem:rolle-multiplicities}, applied $r-1$ times, gives
the required endpoint zeros and $r-1$ simple interior zeros.

Assume the assertion for every multiset with $k$ distinct labels,
and let $M$ be such a multiset, with $p=|M|$. Adjoining $r\ge1$
copies of a new maximal label gives $M^{(r)}$ with $k+1$ distinct
labels. By the induction hypothesis, $H_M'(x)$ has $p$ simple
zeros in $(-\tfrac12,\tfrac12)$ and
is nonzero at both endpoints.  For $0\le j\le r-1$, repeated application
of Lemma~\ref{lem:rolle-multiplicities} shows that
$(d/dx)^j\bigl(Q(x)^rH_M'(x)\bigr)$ has endpoint multiplicities $r-j$
and $p+j$ simple interior zeros.  At $j=r-1$, the endpoints are simple
and the interior count is $p+r-1$.  By~\eqref{eq:block-differential},
this proves the assertion for $H_{M^{(r)}}(x)$.

Consequently, $G_M(x)=H_M(x)/Q(x)$ has $p-1$ simple zeros, all in
$(-\tfrac12,\tfrac12)$.  The map $t=(2x+1)/(2x-1)$ sends this interval
bijectively to $(-\infty,0)$ and has nonzero derivative there.  The
prefactor in~\eqref{eq:G-def} is also nonzero on the interval, so the
corresponding zeros of $P_M(t)$ have the same multiplicities.  Since
$\deg P_M(t)=p-1$, these are all its zeros.
\end{proof}

For nonempty $M$, we use palindromicity to determine the degree
and zeros of $\Gamma_M(z)$.
Put $d=|M|-1$ and $q=\lfloor d/2\rfloor$. Palindromicity~\cite{LMMZ21}
pairs the zeros reciprocally, except for $-1$, which occurs precisely
when $d$ is odd.

Choose $\rho_j<-1$ from each of the $q$ reciprocal pairs, and put
$c_j=-\rho_j-\rho_j^{-1}-2>0$ and $\varepsilon=d-2q$.  Since $P_M(t)$
is monic, factorization into its reciprocal zero pairs gives
\[
P_M(t)=(1+t)^\varepsilon
\prod_{j=1}^q\bigl((1+t)^2+c_jt\bigr),
\qquad
\Gamma_M(z)=\prod_{j=1}^q(1+c_jz).
\]
The function $-u-u^{-1}-2$ is strictly decreasing on $(-\infty,-1)$.
Thus the $c_j$ are distinct and positive, proving the asserted
degree and simple negative zeros of $\Gamma_M(z)$.

\subsection{First-order factorization}

For the refinement and matrix results, we need the first-order
factors of the operator in Proposition~\ref{prop:block-differential}.
For $a>0$, define
\begin{equation*}
\mathcal A_a(g(x))
=Q(x)g'(x)+aQ'(x)g(x),
\end{equation*}
where $Q(x)=x^2-\tfrac14$.  For every positive integer $a$,
\begin{equation}\label{eq:QA-derivative}
\frac d{dx}\bigl(Q(x)^ag(x)\bigr)
=Q(x)^{a-1}\mathcal A_a(g(x)).
\end{equation}

\begin{proposition}\label{prop:differential-factorization}
For every polynomial $g(x)$ and every $r\ge1$,
\begin{align*}
&\frac1{r!}\left(\frac d{dx}\right)^{r-1}
\left(Q(x)^r\frac d{dx}\bigl(Q(x)g(x)\bigr)\right)
\\
&\qquad=\frac{Q(x)}{r!}
\bigl(\mathcal A_2\circ\mathcal A_3\circ\cdots\circ
\mathcal A_r\circ\mathcal A_1\bigr)(g(x)),
\end{align*}
where we omit $\mathcal A_2\circ\cdots\circ\mathcal A_r$ when $r=1$.
\end{proposition}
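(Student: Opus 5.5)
The plan is to peel derivatives off one at a time using \eqref{eq:QA-derivative}, tracking the power of $Q$ that remains outside.

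\emph{Step 1 (the innermost derivative).} By \eqref{eq:QA-derivative} with $a=1$,
\[
\frac d{dx}\bigl(Q(x)g(x)\bigr)=\mathcal A_1(g(x)).
\]
Hence the expression inside the outer derivatives is $Q(x)^r\,\mathcal A_1(g(x))$. For $r=1$ there are no outer derivatives, and the claimed identity is $Q(x)\mathcal A_1(g(x))$ on both sides, so we are done in that case.

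\emph{Step 2 (induction on the number of outer derivatives).} For $r\ge2$ we prove by induction on $j=0,1,\ldots,r-1$ that
\[
\left(\frac d{dx}\right)^{j}\bigl(Q(x)^r h(x)\bigr)
=Q(x)^{r-j}\bigl(\mathcal A_{r-j+1}\circ\cdots\circ\mathcal A_r\bigr)(h(x)),
\]
where $h=\mathcal A_1(g)$ and the composition is empty when $j=0$. For the inductive step, differentiate $Q^{r-j}k$, where $k$ denotes the current composed polynomial. Applying \eqref{eq:QA-derivative} with $a=r-j\ge1$, which is a positive integer, gives $Q^{r-j-1}\mathcal A_{r-j}(k)$. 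So each new factor lands on the left of the composition. At $j=r-1$ the outer power is $Q^1$ and the operator is $\mathcal A_2\circ\cdots\circ\mathcal A_r$, applied to $\mathcal A_1(g)$. Dividing by $r!$ gives exactly the stated formula.

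\emph{Bookkeeping check.} The only point needing care is the ordering and index range. The first outer derivative acts on $Q^r$ and so produces $\mathcal A_r$, which sits innermost next to $\mathcal A_1$. The last outer derivative acts on $Q^2$ and produces $\mathcal A_2$, which sits outermost. Each application requires $a\ge1$, and this holds because the exponent ranges from $r$ down to $2$. There is no real obstacle beyond this index bookkeeping, since \eqref{eq:QA-derivative} itself is just the product rule: $(Q^ag)'=Q^ag'+aQ^{a-1}Q'g$.
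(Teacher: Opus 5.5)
Your proposal is correct and follows the paper's argument: \eqref{eq:QA-derivative} with $a=1$ turns the inner derivative into $\mathcal A_1(g(x))$, and each of the remaining $r-1$ derivatives lowers the power of $Q(x)$ by one while applying $\mathcal A_r,\mathcal A_{r-1},\ldots,\mathcal A_2$ in that order. Your explicit induction on $j$ spells out the same index bookkeeping that the paper states in one line.
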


\begin{proof}
The inner derivative is $\mathcal A_1(g(x))$. By~\eqref{eq:QA-derivative},
each subsequent derivative lowers the power of $Q(x)$ by one and applies
$\mathcal A_r,\mathcal A_{r-1},\ldots,\mathcal A_2$, in that order.
After $r-1$ derivatives, division by $r!$ gives the identity.
\end{proof}

For the first nonempty block, $G_{\{1\}}(x)=1$.  If $r\ge2$, Proposition~\ref{prop:block-differential},
$H_\varnothing'(x)=1$, and repeated use of~\eqref{eq:QA-derivative} give
\[
G_{\{1^r\}}(x)
=\frac1{r!}
\bigl(\mathcal A_2\circ\mathcal A_3\circ\cdots\circ
\mathcal A_r\bigr)(1).
\]
If $M$ is nonempty and $r\ge1$,
then~\eqref{eq:H-QG}, Proposition~\ref{prop:block-differential}, and
Proposition~\ref{prop:differential-factorization} give
\begin{equation}\label{eq:G-block-factorization}
G_{M^{(r)}}(x)
=\frac1{r!}
\bigl(\mathcal A_2\circ\mathcal A_3\circ\cdots\circ
\mathcal A_r\circ\mathcal A_1\bigr)(G_M(x)).
\end{equation}

To return to the original variable, let $d\ge0$ be an integer
and $a>0$ a real number, and define
\[
\mathscr C_d(f(t))(x)
=2^{-d}(2x-1)^d
f\left(\frac{2x+1}{2x-1}\right)
\]
and
\begin{equation}\label{eq:E-def}
\E_{d,a}(f(t))
=\frac1a\left(
 t(1-t)f'(t)+\bigl(a+(d+a)t\bigr)f(t)
\right).
\end{equation}
Then $G_M(x)=\mathscr C_{p-1}(P_M(t))(x)$ for nonempty $M$.

\begin{lemma}\label{lem:A-E-conjugacy}
For every polynomial $f(t)$ of degree at most $d$ and every $a>0$,
\begin{equation*}
\mathcal A_a\bigl(\mathscr C_d(f(t))(x)\bigr)
=a\,\mathscr C_{d+1}\bigl(\E_{d,a}(f(t))\bigr)(x).
\end{equation*}
\end{lemma}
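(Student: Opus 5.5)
This is a direct computation via the chain rule, so the plan is to write $g(x)=\mathscr C_d(f(t))(x)=2^{-d}(2x-1)^d f(\tau(x))$ with $\tau(x)=(2x+1)/(2x-1)$. We differentiate, insert the result into $\mathcal A_a(g)=Q g'+aQ'g$, and recognize the outcome as $a\,2^{-(d+1)}(2x-1)^{d+1}\E_{d,a}(f)(\tau(x))$. The relations among $x$, $\tau$ and $Q$ to record first are:
\[
\tau'(x)=\frac{-4}{(2x-1)^2},\qquad Q(x)=\tfrac14(2x-1)(2x+1)=\tfrac14(2x-1)^2\tau,\qquad Q'(x)=2x.
\]
We will also need $2x$ written through $\tau$. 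Since $\tau-1=2/(2x-1)$ and $\tau+1=4x/(2x-1)$, we get $2x=\tfrac12(2x-1)(\tau+1)$ and $1-\tau=-2/(2x-1)$.

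Next we compute the two terms. The product rule gives
\[
g'(x)=2^{-d}\Bigl(2d(2x-1)^{d-1}f(\tau)-4(2x-1)^{d-2}f'(\tau)\Bigr).
\]
Multiplying by $Q=\tfrac14(2x-1)^2\tau$ gives
\[
Qg'=2^{-d}(2x-1)^{d}\Bigl(\tfrac d2(2x-1)\tau f(\tau)-\tau f'(\tau)\Bigr).
\]
For the other term, $aQ'g=a\,2^{-d}(2x-1)^{d}\cdot\tfrac12(2x-1)(\tau+1)f(\tau)$.

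Then we pull out the common factor $2^{-(d+1)}(2x-1)^{d+1}$. The $f$-terms contribute $d\tau f+a(\tau+1)f=(a+(d+a)\tau)f$. The $f'$-term is $-2^{-d}(2x-1)^d\tau f'$, and it must equal $2^{-(d+1)}(2x-1)^{d+1}\tau(1-\tau)f'$. This holds because $(2x-1)(1-\tau)/2=-1$. So the bracket is exactly $a\,\E_{d,a}(f)(\tau)$, by \eqref{eq:E-def}.

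Finally we check that $\mathscr C_{d+1}$ may legitimately be applied, i.e. that $\E_{d,a}(f)$ has degree at most $d+1$. This is clear from \eqref{eq:E-def}, since $t(1-t)f'$ and $tf$ both raise the degree by at most one. No cancellation needs to be controlled, so the identity holds as rational functions and hence as polynomials.

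The only place to be careful is the sign and power bookkeeping in the $f'$-term; the rest is mechanical.
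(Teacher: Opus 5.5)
Your proposal is correct and follows essentially the same route as the paper: substitute $t=(2x+1)/(2x-1)$, apply the chain rule, and rewrite $Q$, $Q'$ and $dt/dx$ in terms of $t$. The only difference is bookkeeping: you carry the factor $2x-1$ explicitly, whereas the paper uses $t-1=2/(2x-1)$ to write everything as powers of $t-1$, e.g.\ $\mathscr C_d(f(t))(x)=(t-1)^{-d}f(t)$.
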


\begin{proof}
Put $t=(2x+1)/(2x-1)$.  Then
\[
\frac{dt}{dx}=-(t-1)^2,
\qquad
x^2-\frac14=\frac{t}{(t-1)^2},
\qquad
\mathscr C_d(f(t))(x)=(t-1)^{-d}f(t).
\]
Differentiating the last expression and using
$Q'(x)=(t+1)/(t-1)$ gives
\[
\mathcal A_a\bigl(\mathscr C_d(f(t))(x)\bigr)
=(t-1)^{-d-1}
\bigl(t(1-t)f'(t)+\bigl(a+(d+a)t\bigr)f(t)\bigr).
\]
By~\eqref{eq:E-def}, this is
$a\,\mathscr C_{d+1}\bigl(\E_{d,a}(f(t))\bigr)(x)$.
\end{proof}

By injectivity of $\mathscr C_d$ and Lemma~\ref{lem:A-E-conjugacy},
the scalar factors $1,r,r-1,\ldots,2$ cancel the prefactor $1/r!$ in
\eqref{eq:G-block-factorization}. Hence, for nonempty $M$ of size $p$,
when $r=1$,
\begin{equation}\label{eq:block-univ-r1}
P_{M^{(1)}}(t)=\E_{p-1,1}(P_M(t)),
\end{equation}
whereas for $r\ge2$,
\begin{equation}\label{eq:block-univ}
P_{M^{(r)}}(t)
=\bigl(
\E_{p+r-2,2}\circ\E_{p+r-3,3}\circ\cdots\circ
\E_{p,r}\circ\E_{p-1,1}
\bigr)(P_M(t)).
\end{equation}
For the initial multiplicity block,
\begin{equation}\label{eq:initial-block}
P_{\{1^r\}}(t)
=\bigl(\E_{r-2,2}\circ\E_{r-3,3}\circ\cdots\circ
\E_{0,r}\bigr)(1),\qquad r\ge2,
\end{equation}
and $P_{\{1\}}(t)=1$.

\subsection{Interlacing along the factorization}

We allow a degree parameter larger than the input degree,
as needed for the column polynomials in Section~\ref{sec:transitions}.

\begin{lemma}\label{lem:E-root-insertion}
Let $h(t)$ have degree $q$, positive leading coefficient, and only simple
negative zeros.  Let $m$ and $d_*$ be nonnegative integers, let $a>0$, and suppose
$d_*\ge m+q$.  Define
\begin{equation*}
\widetilde h(t)=t(1-t)h'(t)+\bigl(m+a+(d_*+a-m)t\bigr)h(t).
\end{equation*}
Then $\widetilde h(t)$ has degree $q+1$, positive leading coefficient,
and simple negative zeros $\sigma_0<\cdots<\sigma_q$.
For $q\ge1$, these zeros and the zeros
$\rho_1<\cdots<\rho_q<0$ of $h(t)$ satisfy
\[
\sigma_0<\rho_1<\sigma_1<\cdots<\rho_q<\sigma_q<0.
\]
\end{lemma}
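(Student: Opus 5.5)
We locate the zeros of $\widetilde h(t)$ by evaluating it at the zeros of $h(t)$ and at the endpoints, and then counting sign changes. First we check the degree and leading coefficient. Write $h(t)=c\,t^q+\cdots$ with $c>0$. The top-degree part of $t(1-t)h'(t)$ is $-qc\,t^{q+1}$, and the top-degree part of $(d_*+a-m)t\,h(t)$ is $(d_*+a-m)c\,t^{q+1}$. So the coefficient of $t^{q+1}$ in $\widetilde h$ is $c(d_*+a-m-q)$. This is at least $ac>0$, because $d_*\ge m+q$. Hence $\deg\widetilde h=q+1$ with positive leading coefficient.

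Next we evaluate $\widetilde h$ at each zero $\rho_j$ of $h$. There the second term vanishes, so
\[
\widetilde h(\rho_j)=\rho_j(1-\rho_j)h'(\rho_j).
\]
Since $\rho_j<0$, the factor $\rho_j(1-\rho_j)$ is negative. The zeros of $h$ are simple, so $h'(\rho_j)\neq0$. Because all zeros of $h$ are negative and $h$ has positive leading coefficient, $h'(\rho_j)$ has sign $(-1)^{q-j}$, and therefore
\[
\operatorname{sgn}\widetilde h(\rho_j)=(-1)^{q-j+1}.
\]
These signs alternate in $j$. By the intermediate value theorem this gives a zero of $\widetilde h$ in each interval $(\rho_j,\rho_{j+1})$ for $1\le j\le q-1$, which is $q-1$ zeros in all.

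It remains to find one zero in $(\rho_q,0)$ and one in $(-\infty,\rho_1)$; this endpoint bookkeeping is the delicate step. At $t=0$ we have $\widetilde h(0)=(m+a)h(0)$. This is positive, since $m+a>0$ and $h(0)>0$ (all zeros of $h$ are negative and its leading coefficient is positive). Meanwhile $\widetilde h(\rho_q)$ has sign $(-1)^{1}<0$. So there is a zero in $(\rho_q,0)$.

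As $t\to-\infty$, $\widetilde h(t)$ has the sign $(-1)^{q+1}$, because its leading coefficient is positive and its degree is $q+1$. On the other hand, $\widetilde h(\rho_1)$ has sign $(-1)^{q}$. These are opposite, so there is a zero in $(-\infty,\rho_1)$.

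Counting, we have found $(q-1)+2=q+1=\deg\widetilde h$ zeros. They lie in pairwise disjoint open intervals, so they are distinct, simple and negative, and they strictly interlace the $\rho_j$ as claimed. Since $0$ and each $\rho_j$ are not zeros of $\widetilde h$, all the stated inequalities are strict.

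In the case $q=0$, $h$ is a positive constant $c$. Then $\widetilde h(t)=c(m+a)+c(d_*+a-m)t$, which is linear with positive slope and positive constant term, so its single zero is negative. The main obstacle is only keeping the signs at the endpoints straight, and the hypothesis $d_*\ge m+q$ is used exactly to make the leading coefficient positive.
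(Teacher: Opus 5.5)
Your proposal is correct and follows essentially the same argument as the paper. Both compute the leading coefficient $(d_*+a-m-q)\kappa>0$ and the constant term $(m+a)h(0)>0$, and both evaluate $\widetilde h(\rho_i)=\rho_i(1-\rho_i)h'(\rho_i)$ with sign $(-1)^{q-i+1}$; then, using the sign $(-1)^{q+1}$ at $-\infty$, the intermediate value theorem and degree counting place exactly one simple zero in each of the $q+1$ intervals.
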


\begin{proof}
Let $\kappa>0$ be the leading coefficient of $h(t)$.  Since all zeros of
$h(t)$ are negative, $h(0)>0$.  The constant term of $\widetilde h(t)$ is
$(m+a)h(0)>0$, while its leading coefficient is
\[
(d_*+a-m-q)\kappa>0.
\]
For $q=0$, these positive coefficients give a linear polynomial with one
negative zero.  For $q\ge1$, at a zero
$\rho_i$ of $h(t)$,
\[
\widetilde h(\rho_i)=\rho_i(1-\rho_i)h'(\rho_i),
\qquad
\operatorname{sgn}\widetilde h(\rho_i)=(-1)^{q-i+1}.
\]
The sign at $-\infty$ is $(-1)^{q+1}$ and $\widetilde h(0)>0$.
Hence there is one zero in each of
$(-\infty,\rho_1)$, $(\rho_i,\rho_{i+1})$, and $(\rho_q,0)$.
Degree counting shows that these are all the zeros and that they are simple.
\end{proof}

For $f(t)=t^m h(t)$, we have
\begin{equation}\label{eq:E-root-action}
a\,\E_{d_*,a}(f(t))=t^m\widetilde h(t).
\end{equation}
After we remove the factor $t^m$ from $\E_{d_*,a}(f(t))$,
Lemma~\ref{lem:E-root-insertion} gives the remaining zeros and their order.

We combine the block formulas by indexing the factors according
to the degree at which they act.  For a composition $\alpha=(p_1,\ldots,p_m)$ of $N$, write
the partial sums
\[
N_j=p_1+\cdots+p_j\quad(1\le j\le m),\qquad N_m=N.
\]
For $0\le d\le N-2$, define the positive integer
\begin{equation}\label{eq:ad-prefix}
a_d(\alpha)=N_j-d,
\qquad
j=\min\{i:N_i>d\}.
\end{equation}

\begin{proposition}\label{prop:global-factorization}
If $N=1$, then $P_\alpha(t)=1$.  If $N\ge2$, then
\begin{equation}\label{eq:global-factorization}
P_\alpha(t)
=\bigl(
\E_{N-2,a_{N-2}(\alpha)}\circ\cdots\circ
\E_{1,a_1(\alpha)}\circ\E_{0,a_0(\alpha)}
\bigr)(1).
\end{equation}
\end{proposition}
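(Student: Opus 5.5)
The plan is to prove \eqref{eq:global-factorization} by induction on the number $m$ of parts of $\alpha$, chaining the block formulas \eqref{eq:initial-block}, \eqref{eq:block-univ-r1} and \eqref{eq:block-univ}. At each step we check that the subscripts of the factors agree with the values $a_d(\alpha)$ from \eqref{eq:ad-prefix}. The case $N=1$ is immediate, since then $P_{\{1\}}(t)=1$ and the composition in \eqref{eq:global-factorization} is empty.

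For the base case $m=1$, we have $\alpha=(N)$ with $N\ge2$. Here $N_1=N$, so $a_d(\alpha)=N-d$ for $0\le d\le N-2$. The factors in \eqref{eq:global-factorization} are then $\E_{d,N-d}$ for $d=0,\ldots,N-2$, applied in order of increasing $d$. This is exactly \eqref{eq:initial-block} with $r=N$, whose rightmost factor is $\E_{0,N}$ and whose leftmost factor is $\E_{N-2,2}$.

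For the inductive step, write $\alpha=(\alpha',p_m)$, where $\alpha'=(p_1,\ldots,p_{m-1})$ has size $p=N_{m-1}\ge1$, and put $r=p_m$. The first observation is that $a_d(\alpha)=a_d(\alpha')$ for $0\le d\le p-2$. Indeed, in that range the minimal index $j$ with $N_j>d$ satisfies $j\le m-1$, because $N_{m-1}=p>d$. Hence, by the induction hypothesis (or by $P_{\alpha'}=1$ when $p=1$), the inner factors of \eqref{eq:global-factorization} up to degree $p-2$ produce $P_{\alpha'}(t)=P_M(t)$.

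For $p-1\le d\le N-2$, every $N_i$ with $i\le m-1$ is at most $p\le d$, so the minimal index is $j=m$ and $a_d(\alpha)=N-d=p+r-d$. At $d=p-1$ this gives $a=r+1$. This does not match the factor $\E_{p-1,1}$ in \eqref{eq:block-univ}, so the indexing must be checked carefully. The definition \eqref{eq:ad-prefix} is the authors', so I take \eqref{eq:block-univ} to be the formula that has to be reconciled with it. The factors there are $\E_{p-1,1}$, $\E_{p,r}$, $\E_{p+1,r-1}$, and so on up to $\E_{p+r-2,2}$, and the general one is $\E_{p+s-1,r-s+1}$ for $s\ge1$. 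Its second index is $r-s+1=N-(p+s-1)$, which equals $N-d$ with $d=p+s-1$.

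So every factor matches except the one at $d=p-1$, where the block formula gives $a=1$ and \eqref{eq:ad-prefix} gives $a=r+1$. The key point is therefore that the factor at $d=p-1$ can be handled through the leading-coefficient structure. I would then recheck whether \eqref{eq:ad-prefix} should actually be read with $N_j\ge d$, or with $d$ shifted by one relative to the degree. With the rule $j=\min\{i:N_i\ge d+1\}$ and $a=N_j-d$, the degree $d=p-1$ gives $N_{m-1}=p\ge p$, so $j=m-1$ and $a=p-(p-1)=1$. That matches $\E_{p-1,1}$, and $N_i>d$ is the same as $N_i\ge d+1$. So my earlier computation was in error. Since $N_{m-1}=p>p-1$, the minimal index is $j\le m-1$, and $a_{p-1}(\alpha)=N_j-(p-1)$. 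This equals $1$ because $j=m-1$ is forced: every earlier $N_i$ is at most $N_{m-2}\le p-1$.

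Thus the range $d\le p-1$ is governed by $\alpha'$, with the top value $a_{p-1}=1$ agreeing with $\E_{p-1,1}$. The range $p\le d\le N-2$ has $j=m$ and $a_d=N-d$, which agrees with \eqref{eq:block-univ} (and with \eqref{eq:block-univ-r1} when $r=1$, where that range is empty). This bookkeeping at the block boundary is the only real obstacle. Composing the two ranges gives \eqref{eq:global-factorization} for $\alpha$ and completes the induction.
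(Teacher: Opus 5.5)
Your proposal is correct and follows the paper's proof: both concatenate the initial-block and later-block formulas (you package this as an induction on the number of parts) and check that each later block's first factor sits at $d=p-1$ with $a_{p-1}(\alpha)=1$, while its remaining factors have $a_d(\alpha)=N_j-d$. You should delete the intermediate claim that $a_{p-1}(\alpha)=r+1$ and the remark about a ``leading-coefficient structure,'' since that computation was mistaken and your own correction (that $N_{m-1}=p>p-1$ forces $j=m-1$) is all the boundary case needs.
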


\begin{proof}
The initial block contributes no factor if $p_1=1$. Otherwise,
\eqref{eq:initial-block} acts at degrees $0\le d\le p_1-2$ with
parameter $p_1-d=a_d(\alpha)$.

For a later block $p_j=r$, put $p=N_{j-1}$. Its first factor is
$\E_{p-1,1}$, and $a_{p-1}(\alpha)=N_{j-1}-(p-1)=1$.
When $r\ge2$, the remaining factors in~\eqref{eq:block-univ} act
at $p\le d\le p+r-2$ with parameter $p+r-d=N_j-d=a_d(\alpha)$.
Concatenating these blocks proves~\eqref{eq:global-factorization}.
\end{proof}

For $N\ge2$ and $0\le d\le N-2$, set
\[
P_\alpha^{[d]}(t)
=\bigl(
\E_{d,a_d(\alpha)}\circ\cdots\circ
\E_{1,a_1(\alpha)}\circ\E_{0,a_0(\alpha)}
\bigr)(1).
\]
Thus $P_\alpha^{[N-2]}(t)=P_\alpha(t)$.

Starting with $P_\alpha^{[-1]}(t)=1$, successive applications of
Lemma~\ref{lem:E-root-insertion} with $m=0$, $d_*=q=d$, and
$a=a_d(\alpha)$ show that $P_\alpha^{[d]}(t)$ has degree $d+1$ and
only simple negative zeros.  For $1\le d\le N-2$, the zeros of
$P_\alpha^{[d-1]}(t)$ strictly interlace those of $P_\alpha^{[d]}(t)$.

\begin{remark}\label{rem:factor-versus-block}
The intermediate polynomials need not arise from adjoining
whole multiplicity blocks.  For example, when $\alpha=(4)$, the
partial polynomial $P_\alpha^{[1]}(t)=1+\tfrac83t+t^2$ has nonintegral
coefficients.  Moreover, the polynomials before and after insertion of a whole
multiplicity block can share a zero.  For example,
$P_{(2)}(t)=1+t$ and
$P_{(2,2)}(t)=1+8t+8t^2+t^3$ both vanish at $-1$.
\end{remark}

\section{\texorpdfstring{$\gamma$}{gamma}-polynomials and refinement}\label{sec:refinement}

In this section, we use the factorization from Section~\ref{sec:forest}
to compare compositions of the same total size. We express its first-order
factors as operators on the $\gamma$-polynomials and track the changes
in their coefficients and zeros under refinement. We use these
comparisons to prove Theorem~\ref{thm:refinement}, classify the equality
cases, and obtain bounds from the Narayana and Eulerian polynomials.

\subsection{The \texorpdfstring{$\gamma$}{gamma}-operator and interlacing}

We first show that each first-order factor preserves palindromicity,
which allows us to express its action in the $\gamma$-basis.

\begin{lemma}\label{lem:palindromicity-preservation}
Let $f(t)$ be palindromic of degree $d$, that is,
\[
f(t)=t^df(1/t),
\]
and let $a>0$.  Then $\E_{d,a}(f(t))$ is palindromic of degree $d+1$.
\end{lemma}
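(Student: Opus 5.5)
We verify the identity $t^{d+1}g(1/t)=g(t)$ for $g=\E_{d,a}(f)$ directly from the definition \eqref{eq:E-def}, writing everything in terms of $f$ and $f'$ evaluated at $1/t$. The only input is the derivative form of palindromicity. Differentiating $f(t)=t^df(1/t)$ gives
\[
f'(t)=d\,t^{d-1}f(1/t)-t^{d-2}f'(1/t),
\]
and equivalently, replacing $t$ by $1/t$, we can express $f'(1/t)$ through $f(t)$ and $f'(t)$.

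We first check the degree. The coefficient of $t^{d+1}$ in $t(1-t)f'(t)+(a+(d+a)t)f(t)$ is $-d\kappa+(d+a)\kappa=a\kappa$, where $\kappa$ is the leading coefficient of $f$. Hence $\E_{d,a}(f)$ has degree exactly $d+1$ whenever $f\ne0$; the case $f=0$ is trivial. Its constant term is $f(0)=\kappa$ after division by $a$, which is consistent with palindromicity.

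For the main computation, we evaluate
\[
a\,t^{d+1}\E_{d,a}(f)(1/t)=t^{d+1}\Bigl(\tfrac1t\bigl(1-\tfrac1t\bigr)f'(1/t)+\bigl(a+(d+a)/t\bigr)f(1/t)\Bigr).
\]
Substituting $f(1/t)=t^{-d}f(t)$ and
\[
f'(1/t)=t^{2-d}\bigl(d\,t^{-1}f(t)\cdot t\cdot t^{-1}\cdots\bigr),
\]
the latter obtained by solving the differentiated identity, gives the expression. More cleanly, from $f(t)=t^df(1/t)$ we get $f'(1/t)=t^{2}\bigl(d\,t^{-d+1}\cdot t^{-1}f(t)\bigr)-t^{2-d}f'(t)$, that is,
\[
f'(1/t)=d\,t^{2-d}\,t^{-1}f(t)\cdot t^{0}-t^{2-d}f'(t)
\]
up to the bookkeeping of powers. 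This should be rederived carefully, since it is the only error-prone step. Plugging in, the $f'$-terms become $t^{d+1}\cdot t^{-2}(t-1)\cdot(-t^{2-d})f'(t)=t(1-t)f'(t)$. The $f$-terms combine as
\[
(t-1)\,d\,f(t)+\bigl(at+(d+a)\bigr)f(t)=\bigl(a+(d+a)t\bigr)f(t)-\bigl(dt-d\bigr)f(t)+\cdots,
\]
and a direct check shows that the total coefficient of $f(t)$ is $a+(d+a)t$. So we recover $a\,\E_{d,a}(f)(t)$.

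We expect no real obstacle. The only delicate point is the sign and power bookkeeping in the formula for $f'(1/t)$. As an alternative check, one can use the conjugation of Lemma~\ref{lem:A-E-conjugacy}. Under $t\mapsto1/t$ we have $x\mapsto -x$, and palindromicity of $f$ of degree $d$ corresponds to $\mathscr C_d(f)(-x)=(-1)^d\mathscr C_d(f)(x)$. Since $Q$ is even and $Q'$ is odd, $\mathcal A_a$ maps functions of parity $(-1)^d$ to functions of parity $(-1)^{d+1}$. This proves the claim with no computation, and we would present this parity argument as the proof, after checking $t\mapsto1/t\Leftrightarrow x\mapsto-x$ and the factor $(t-1)^{-d}\mapsto(-1)^dt^{d}(t-1)^{-d}$.
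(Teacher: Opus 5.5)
Your argument is correct. Its first half is exactly the paper's proof: substitute $f(1/t)=t^{-d}f(t)$ and $f'(1/t)=d\,t^{1-d}f(t)-t^{2-d}f'(t)$ into $t^{d+1}\cdot a\E_{d,a}(f)(1/t)$, then check the coefficient $-d\kappa+(d+a)\kappa=a\kappa$. One of your displayed versions, $t^{2}\bigl(d\,t^{-d+1}t^{-1}f(t)\bigr)-\cdots$, has an extra factor of $t$. The term $d(t-1)f(t)$ you then use does come from the correct formula.

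The parity argument you propose to present instead is a genuinely different route. It goes through Lemma~\ref{lem:A-E-conjugacy}. Since $2^{-d}(2x-1)^d=(t-1)^{-d}$ and $x\mapsto-x$ induces $t\mapsto1/t$, one has $\mathscr C_d(f)(-x)=(-1)^d(t-1)^{-d}t^df(1/t)$. So for $\deg f\le d$, the functional equation $t^df(1/t)=f(t)$ is equivalent to $\mathscr C_d(f)$ having parity $(-1)^d$. Because $Q$ is even and $Q'$ is odd, $\mathcal A_a$ reverses parity, and injectivity of $\mathscr C_{d+1}$ carries this back to $\E_{d,a}(f)$.

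The paper's substitution stays in the variable $t$ and costs two lines. Your route depends on the conjugacy lemma, but it explains the phenomenon: palindromicity in $t$ is parity in $x$. Every first-order factor respects this, and so does the whole block operator $(d/dx)^{r-1}Q^r(d/dx)$, consistent with $G_M(-\tfrac12)=(-1)^{p-1}$. In both routes the exact degree $d+1$ still needs the separate leading-coefficient check. That check uses that $f$ has degree exactly $d$, not merely that $t^df(1/t)=f(t)$.
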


\begin{proof}
Differentiating the palindromic identity gives
\[
f'(1/t)=d t^{1-d}f(t)-t^{2-d}f'(t).
\]
If $g(t)=a\E_{d,a}(f(t))$, then substitution into~\eqref{eq:E-def} yields
\[
t^{d+1}g(1/t)
=t(1-t)f'(t)+\bigl(a+(d+a)t\bigr)f(t)=g(t).
\]
The coefficient of $t^{d+1}$ in $g(t)$ is $a$ times the leading
coefficient of $f(t)$, proving the degree assertion.
\end{proof}

The factorization~\eqref{eq:global-factorization} thus recovers the
palindromicity of $P_M(t)$ proved in~\cite{LMMZ21}.
We write $z=t/(1+t)^2$ and express the two polynomials as
\[
f(t)=(1+t)^d\Gamma(z),
\qquad
\E_{d,a}(f(t))=(1+t)^{d+1}\widehat\Gamma(z).
\]
Using $dz/dt=(1-t)/(1+t)^3$ in~\eqref{eq:E-def} and
dividing by $(1+t)^{d+1}$, we obtain
\begin{equation}\label{eq:gamma-op}
\widehat\Gamma(z)
=\frac1a\left(
 z(1-4z)\Gamma'(z)+(a+2dz)\Gamma(z)
\right).
\end{equation}
Define the operators
\[
\Phi_{d,a}(\Gamma(z))
=\Gamma(z)+\frac1a\mathcal D_d(\Gamma(z)),
\qquad
\mathcal D_d(\Gamma(z))
=z(1-4z)\Gamma'(z)+2dz\Gamma(z).
\]
Writing
\[
\Gamma(z)=\sum_{i=0}^{\lfloor d/2\rfloor}\gamma_i z^i,
\qquad
\widehat\Gamma(z)=\sum_{i=0}^{\lfloor(d+1)/2\rfloor}
\widehat\gamma_i z^i,
\]
we obtain
\begin{equation}\label{eq:gamma-coeff-op}
\widehat\gamma_i
=\frac{a+i}{a}\gamma_i
+\frac{2(d-2i+2)}{a}\gamma_{i-1}.
\end{equation}
Here $\gamma_i=0$ for $i<0$ or $i>\lfloor d/2\rfloor$.
For $0\le i\le\lfloor(d+1)/2\rfloor$, the multipliers
in~\eqref{eq:gamma-coeff-op} are positive. Hence $\Phi_{d,a}$
preserves nonnegative coefficients, and
\begin{equation}\label{eq:Dd-positive}
[z^i]\mathcal D_d(\Gamma(z))
=i\gamma_i+2(d-2i+2)\gamma_{i-1}\ge0.
\end{equation}
The higher coefficients vanish, so $\mathcal D_d$ also preserves
nonnegative coefficients. Along~\eqref{eq:global-factorization}, all
intermediate $\gamma$-coefficients are positive, since the first term in
\eqref{eq:gamma-coeff-op} preserves positivity, and each new leading
coefficient is $2/a$ times the preceding one.

\begin{lemma}\label{lem:gamma-root-insertion}
Let $d\ge0$ be an integer, $a>0$, and $m=\lfloor d/2\rfloor$.
Suppose $\Gamma(z)$ has degree $m$, positive leading coefficient, and
only simple negative zeros $\xi_1<\cdots<\xi_m$.
Then $\widehat\Gamma(z)=\Phi_{d,a}(\Gamma(z))$ has degree
$\lfloor(d+1)/2\rfloor$, positive leading coefficient, and only simple
negative zeros.  For $m\ge1$, the zeros $\eta_i$ of $\widehat\Gamma(z)$
satisfy
\[
\begin{array}{ll}
\xi_1<\eta_1<\xi_2<\eta_2<\cdots<\xi_m<\eta_m<0,
& d=2m,\\[2pt]
\eta_1<\xi_1<\eta_2<\cdots<\xi_m<\eta_{m+1}<0,
& d=2m+1.
\end{array}
\]
\end{lemma}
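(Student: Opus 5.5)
Evaluating $\widehat\Gamma=\Phi_{d,a}(\Gamma)$ at the zeros of $\Gamma$ and counting sign changes, in the same way as Lemma~\ref{lem:E-root-insertion}, should do it. The degree and leading coefficient follow from \eqref{eq:gamma-coeff-op}. If $d=2m+1$, the new top coefficient is $\widehat\gamma_{m+1}=\frac{2(d-2m)}{a}\gamma_m=\frac{2}{a}\gamma_m>0$. If $d=2m$, the top coefficient is $\widehat\gamma_m=\frac{a+m}{a}\gamma_m+\frac{4}{a}\gamma_{m-1}$, which is positive because all coefficients of $\Gamma$ are positive (its zeros are negative and its leading coefficient is positive). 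The constant term is $\widehat\gamma_0=\gamma_0=\Gamma(0)>0$. The case $m=0$ needs only these facts: for $d=0$ the output is a positive constant, and for $d=1$ it is a linear polynomial with positive coefficients, hence has one negative zero.

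For $m\ge1$, at each zero $\xi_i$ we have
\[
\widehat\Gamma(\xi_i)=\tfrac1a\,\xi_i(1-4\xi_i)\Gamma'(\xi_i).
\]
Since $\xi_i<0$, the factor $\xi_i(1-4\xi_i)$ is negative. The zeros are simple, so $\operatorname{sgn}\Gamma'(\xi_i)=(-1)^{m-i}$, and therefore $\operatorname{sgn}\widehat\Gamma(\xi_i)=(-1)^{m-i+1}$. These signs alternate, and $\widehat\Gamma(\xi_m)<0$ while $\widehat\Gamma(0)>0$. This produces one zero in each interval $(\xi_i,\xi_{i+1})$ for $1\le i\le m-1$, plus one in $(\xi_m,0)$, for $m$ zeros so far.

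The remaining step splits by parity. When $d=2m$, $\widehat\Gamma$ has degree $m$, so these $m$ zeros are all of its zeros; they are simple and in the pattern $\xi_1<\eta_1<\xi_2<\cdots<\xi_m<\eta_m<0$. When $d=2m+1$, the degree is $m+1$ and the leading coefficient is positive, so the sign at $-\infty$ is $(-1)^{m+1}$. This is opposite to $\operatorname{sgn}\widehat\Gamma(\xi_1)=(-1)^m$, giving an extra zero in $(-\infty,\xi_1)$. The count now reaches $m+1$, so all zeros are simple, negative, and ordered $\eta_1<\xi_1<\eta_2<\cdots<\xi_m<\eta_{m+1}<0$.

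The only delicate step I expect is the even case: the bare degree count there relies on $\widehat\gamma_m\neq0$, which holds because the $\gamma_{m-1}$ term has a positive multiplier. As a cross-check, one could also get real-rootedness by transporting Lemma~\ref{lem:E-root-insertion} through $z=t/(1+t)^2$, since this map sends $(-1,0)$ monotonically onto $(-\infty,0)$. The direct argument above is self-contained, though, so I would not rely on it.
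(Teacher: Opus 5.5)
Your argument is correct and complete, but it locates the zeros by a different route from the paper. You evaluate $\widehat\Gamma$ at the old zeros $\xi_i$, use the alternating signs $(-1)^{m-i+1}$, the value $\widehat\Gamma(0)>0$, and, for odd $d$, the sign at $-\infty$. You then finish by degree counting, exactly as in Lemma~\ref{lem:E-root-insertion}.

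The paper instead works with the rational function $\mathcal W(z,\boldsymbol\xi,a)=a\widehat\Gamma(z)/\Gamma(z)$ of \eqref{eq:gamma-root-equilibrium}. It writes $\mathcal W$ in partial fractions with negative residues $\xi_j(1-4\xi_j)$ and shows $\mathcal W_z>0$ on every pole-free interval. Strict monotonicity between poles then gives exactly one zero per interval. The limit at $-\infty$, either $a+\sum_j(1-4\xi_j)>0$ or $-\infty$, separates the two parities. Simplicity comes from \eqref{eq:gamma-equilibrium-derivative} rather than from counting.

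Your route is more elementary and needs no extra setup. The paper's route costs a little more, but it produces the inequality $\mathcal W_z(\eta_i)>0$ and the partial derivatives $\mathcal W_{\xi_j}$ and $\mathcal W_a$. These are used immediately in Proposition~\ref{prop:gamma-root-monotonicity} for the implicit-differentiation comparison of zeros. With your proof, that positivity would have to be derived separately, for instance from $\mathcal W_z(\eta_i)=a\widehat\Gamma'(\eta_i)/\Gamma(\eta_i)$ and the interlacing sign pattern.

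One small correction to your closing remark: in the even case, $\widehat\gamma_m\ne0$ is already guaranteed by the term $\frac{a+m}{a}\gamma_m>0$. The $\gamma_{m-1}$ term only needs to be nonnegative, not to supply the nonvanishing.
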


\begin{proof}
When $m=0$, formula~\eqref{eq:gamma-op} gives
$\widehat\Gamma(z)=\Gamma(z)$ for $d=0$ and
$\widehat\Gamma(z)=\Gamma(z)(1+2z/a)$ for $d=1$, whose only zero is
$-a/2$. For $m\ge1$, all coefficients of $\Gamma(z)$ are positive, and
formula~\eqref{eq:gamma-coeff-op} shows that $\widehat\Gamma(z)$ has
positive leading coefficient and degree $\lfloor(d+1)/2\rfloor$.
When $d=2m$, the coefficient of $z^{m+1}$ vanishes, whereas for
$d=2m+1$ it equals $2\gamma_m/a>0$.

At a zero $\xi_j$ of $\Gamma(z)$,
$\widehat\Gamma(\xi_j)=a^{-1}\xi_j(1-4\xi_j)\Gamma'(\xi_j)\ne0$.
Thus $\Gamma(z)$ and $\widehat\Gamma(z)$ have no common zero. To locate
the zeros of $\widehat\Gamma(z)$, put
$\boldsymbol\xi=(\xi_1,\ldots,\xi_m)$ and define, for
$z\notin\{\xi_1,\ldots,\xi_m\}$,
\begin{equation}\label{eq:gamma-root-equilibrium}
\mathcal W(z,\boldsymbol\xi,a)
=\frac{a\widehat\Gamma(z)}{\Gamma(z)}
=a+2dz+z(1-4z)\sum_{j=1}^m\frac1{z-\xi_j}.
\end{equation}
Polynomial division gives
\begin{equation*}
\mathcal W(z,\boldsymbol\xi,a)
=a+2(d-2m)z+
\sum_{j=1}^m\left(1-4\xi_j+
\frac{\xi_j(1-4\xi_j)}{z-\xi_j}\right).
\end{equation*}
Consequently, on every pole-free real interval,
\begin{equation}\label{eq:gamma-equilibrium-derivative}
\mathcal W_z(z,\boldsymbol\xi,a)
=2(d-2m)+\sum_{j=1}^m
\frac{-\xi_j(1-4\xi_j)}{(z-\xi_j)^2}>0.
\end{equation}
Since every residue is negative, $\mathcal W(z,\boldsymbol\xi,a)$ tends to
$+\infty$ on the left of each pole and to $-\infty$ on its right.
There is exactly one zero in each $(\xi_i,\xi_{i+1})$, and, since
$\mathcal W(0,\boldsymbol\xi,a)=a>0$, exactly one in $(\xi_m,0)$.
For $d=2m$, the limit at $-\infty$ is
$a+\sum_{j=1}^m(1-4\xi_j)>0$, so there is no zero in $(-\infty,\xi_1)$.
For $d=2m+1$, the limit is $-\infty$, giving one additional zero there.
These zeros exhaust the degree of $\widehat\Gamma(z)$ and are
simple by~\eqref{eq:gamma-equilibrium-derivative}.
\end{proof}

For a composition $\alpha$ of $N$, let $\Gamma_\alpha^{[d]}(z)$ be the
$\gamma$-polynomial of $P_\alpha^{[d]}(t)$ for $0\le d\le N-2$, and set
$\Gamma_\alpha^{[-1]}(z)=1$.  Then
\begin{equation}\label{eq:gamma-prefix-rec}
\Gamma_\alpha^{[d]}(z)
=\Phi_{d,a_d(\alpha)}\bigl(\Gamma_\alpha^{[d-1]}(z)\bigr).
\end{equation}
Lemma~\ref{lem:gamma-root-insertion} gives strict interlacing at
successive steps, both when the degrees differ and when they are equal
and positive.

\begin{example}\label{ex:factor-chain-22}
For $\alpha=(2,2)$, the parameters in~\eqref{eq:ad-prefix} are
$(a_0,a_1,a_2)=(2,1,2)$, and hence
\[
P_{(2,2)}(t)=\E_{2,2}\E_{1,1}\E_{0,2}(1).
\]
Starting with $P_{(2,2)}^{[-1]}(t)=1$, the successive polynomials are
\[
\begin{array}{c|c|c}
d&P_{(2,2)}^{[d]}(t)&\Gamma_{(2,2)}^{[d]}(z)\\ \hline
0&1+t&1\\
1&1+4t+t^2&1+2z\\
2&1+8t+8t^2+t^3&1+5z
\end{array}
\]
Here $P_{(2,2)}^{[d]}(t)=(1+t)^{d+1}
\Gamma_{(2,2)}^{[d]}(t/(1+t)^2)$.
The first zero, $-1$, lies between the two zeros $-2\pm\sqrt3$
at the next step. The zeros in the last two steps satisfy
\[
\frac{-7-3\sqrt5}{2}<-2-\sqrt3<-1
<-2+\sqrt3<\frac{-7+3\sqrt5}{2}<0.
\]
The corresponding $\gamma$-zeros are $-\tfrac12<-\tfrac15$.
\end{example}

\subsection{Monotonicity of the zeros}

We use~\eqref{eq:gamma-root-equilibrium} to compare the transformed
zeros. Fix an integer $d\ge1$, and put
$m=\lfloor d/2\rfloor$ and
$q=\lfloor(d+1)/2\rfloor$.  Let
\[
\Gamma_{\boldsymbol\xi}(z)=\prod_{j=1}^m(z-\xi_j),
\qquad
\xi_1<\cdots<\xi_m<0,
\]
and for $a>0$ let
\[
\eta_1(\boldsymbol\xi,a)<\cdots<
\eta_q(\boldsymbol\xi,a)<0
\]
be the zeros we obtain by applying~\eqref{eq:gamma-op} to
$\Gamma_{\boldsymbol\xi}(z)$.
For another vector $\widetilde{\boldsymbol\xi}$ of length $m$ with
strictly increasing negative entries, the notation
$\boldsymbol\xi\le\widetilde{\boldsymbol\xi}$ means that
$\xi_j\le\widetilde\xi_j$ for every $j$.

\begin{proposition}\label{prop:gamma-root-monotonicity}
Each $\eta_i(\boldsymbol\xi,a)$ is strictly increasing in every
$\xi_j$ and strictly decreasing in $a$.  If
$\boldsymbol\xi\le\widetilde{\boldsymbol\xi}$ and $0<b\le a$, then
\[
\eta_i(\boldsymbol\xi,a)
\le
\eta_i(\widetilde{\boldsymbol\xi},b)
\]
for $1\le i\le q$, with strict inequalities for every $i$ unless
$\boldsymbol\xi=\widetilde{\boldsymbol\xi}$ and $a=b$.
\end{proposition}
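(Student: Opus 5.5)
The plan is to treat each zero $\eta_i$ as an implicit function of $(\boldsymbol\xi,a)$ defined by $\mathcal W(\eta_i,\boldsymbol\xi,a)=0$, using the equilibrium function \eqref{eq:gamma-root-equilibrium}. By Lemma~\ref{lem:gamma-root-insertion}, each $\eta_i$ is a simple zero of $\mathcal W(\cdot,\boldsymbol\xi,a)$ lying in a fixed pole-free interval: $(\xi_{i-1},\xi_i)$ or $(\xi_i,\xi_{i+1})$ depending on parity, with the conventions $\xi_0=-\infty$ and $\xi_{m+1}=0$. On that interval \eqref{eq:gamma-equilibrium-derivative} gives $\mathcal W_z>0$. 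The implicit function theorem then makes $\eta_i$ a smooth function of $(\boldsymbol\xi,a)$ on the open set where $\xi_1<\cdots<\xi_m<0$ and $a>0$, with
\[
\frac{\partial\eta_i}{\partial\xi_j}=-\frac{\mathcal W_{\xi_j}}{\mathcal W_z},\qquad
\frac{\partial\eta_i}{\partial a}=-\frac{\mathcal W_a}{\mathcal W_z}.
\]
It therefore suffices to show that $\mathcal W_{\xi_j}<0$ and $\mathcal W_a>0$ at $z=\eta_i$.

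The derivative in $a$ is immediate: $\mathcal W_a=1>0$, so $\eta_i$ is strictly decreasing in $a$. For $\xi_j$, the unsimplified form \eqref{eq:gamma-root-equilibrium} gives
\[
\mathcal W_{\xi_j}=\frac{z(1-4z)}{(z-\xi_j)^2}.
\]
Since $\eta_i<0$, we have $z(1-4z)<0$ at $z=\eta_i$, and $\eta_i\ne\xi_j$ because the zeros of $\Gamma$ and $\widehat\Gamma$ are disjoint. Hence $\mathcal W_{\xi_j}<0$, and $\partial\eta_i/\partial\xi_j>0$. Both derivatives are therefore strictly signed throughout the domain, which gives the strict monotonicity in each variable separately.

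For the comparison, I would join $(\boldsymbol\xi,a)$ to $(\widetilde{\boldsymbol\xi},b)$ by the straight segment $s\mapsto\bigl((1-s)\boldsymbol\xi+s\widetilde{\boldsymbol\xi},\,(1-s)a+sb\bigr)$. The domain is convex: a convex combination of strictly increasing negative vectors is again strictly increasing and negative, and the parameter stays positive. Along the segment,
\[
\frac{d\eta_i}{ds}=\sum_j\frac{\partial\eta_i}{\partial\xi_j}(\widetilde\xi_j-\xi_j)+\frac{\partial\eta_i}{\partial a}(b-a),
\]
and every term is nonnegative. If some $\widetilde\xi_j\ne\xi_j$ or $b\ne a$, then at least one term is strictly positive for every $s$, so $\eta_i$ increases strictly. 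Otherwise the two endpoints coincide and equality holds.

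The main obstacle is justifying that $\eta_i$, as the $i$th zero, really is a single smooth branch that stays in the same interlacing interval as the parameters move. For this I rely on Lemma~\ref{lem:gamma-root-insertion}: throughout the domain the zeros are simple, and the $i$th zero is pinned between the same consecutive poles (or $-\infty$, or $0$). The implicit-function branch therefore coincides with the ordered $i$th zero, and continuity holds along the whole segment. The case $m=0$ should be checked separately. There $d=1$, $\eta_1=-a/2$, and the claims about $a$ follow directly.
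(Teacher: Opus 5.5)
Your proposal is correct and follows essentially the same route as the paper. Both use implicit differentiation of $\mathcal W(\eta_i,\boldsymbol\xi,a)=0$ with $\mathcal W_{\xi_j}=\eta_i(1-4\eta_i)/(\eta_i-\xi_j)^2<0$, $\mathcal W_a=1$ and $\mathcal W_z>0$, then integrate $d\eta_i/ds\ge0$ along the straight segment joining $(\boldsymbol\xi,a)$ to $(\widetilde{\boldsymbol\xi},b)$, and treat $m=0$ separately via $\eta_1=-a/2$. Your remark that the $i$th zero stays confined between the same consecutive poles is a slightly more explicit justification of the paper's claim that the zeros form smooth branches.
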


\begin{proof}
For $m=0$, the zero $-a/2$ has the required monotonicity.
We therefore assume $m\ge1$ and use
$\mathcal W(z,\boldsymbol\xi,a)$ from~\eqref{eq:gamma-root-equilibrium}.
Every $\eta_i$ is negative and distinct from all $\xi_j$ by
Lemma~\ref{lem:gamma-root-insertion}.  At such a zero $\eta_i$,
\[
\mathcal W_{\xi_j}(\eta_i,\boldsymbol\xi,a)
=\frac{\eta_i(1-4\eta_i)}{(\eta_i-\xi_j)^2}<0,
\qquad \mathcal W_a(\eta_i,\boldsymbol\xi,a)=1.
\]
By~\eqref{eq:gamma-equilibrium-derivative}, $\mathcal W_z(\eta_i)>0$, where we
suppress the fixed parameters.  Implicit differentiation therefore gives
\[
\frac{\partial\eta_i}{\partial\xi_j}
=\frac{-\eta_i(1-4\eta_i)}{(\eta_i-\xi_j)^2\mathcal W_z(\eta_i)}>0,
\qquad
\frac{\partial\eta_i}{\partial a}=-\frac1{\mathcal W_z(\eta_i)}<0.
\]
To prove the final assertion, we interpolate both the zeros and the parameter by
\[
\xi_j(s)=(1-s)\xi_j+s\widetilde\xi_j,
\qquad
a(s)=(1-s)a+sb
\qquad(0\le s\le1).
\]
The interpolated vector remains strictly ordered and negative.
Lemma~\ref{lem:gamma-root-insertion} ensures that the transformed
zeros remain simple, so they form smooth branches satisfying
\[
\frac{d\eta_i}{ds}
=
\sum_{j=1}^m
\frac{\partial\eta_i}{\partial\xi_j}
(\widetilde\xi_j-\xi_j)
+
\frac{\partial\eta_i}{\partial a}(b-a)\ge0,
\]
with strict inequality if $\xi_j<\widetilde\xi_j$ for some $j$ or $b<a$.
Integrating from $s=0$ to $s=1$ gives the claimed comparison, including
strictness whenever at least one parameter changes.
\end{proof}

\subsection{Proof of the refinement theorem}

To prove Theorem~\ref{thm:refinement}, we first compare the parameters
of the first-order factors under refinement.
For a composition $\delta$, let $\operatorname{Cut}(\delta)$ denote
its set of proper prefix sums.

\begin{lemma}\label{lem:refinement-parameters}
Let $\beta$ be a refinement of a composition $\alpha$ of $N\ge2$.
Then
\[
a_d(\beta)\le a_d(\alpha)
\]
for $0\le d\le N-2$.
If $\operatorname{Cut}(\beta)\setminus\operatorname{Cut}(\alpha)
\subseteq\{1\}$, the parameters agree for $1\le d\le N-2$.
Otherwise, every new cut $c>1$ gives an index $d_0=c-1$ satisfying
\[
1\le d_0\le N-2,
\qquad a_{d_0}(\beta)=1<a_{d_0}(\alpha).
\]
\end{lemma}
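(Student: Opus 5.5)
The plan is to rewrite $a_d(\delta)$ as the distance from $d$ to the next cut, and then to read the inequality and the equality cases directly off the inclusion of cut sets.

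\emph{Step 1 (reformulation).} For a composition $\delta$ of $N$, put $C(\delta)=\operatorname{Cut}(\delta)\cup\{N\}=\{N_1,\dots,N_m\}$. By \eqref{eq:ad-prefix},
\[
a_d(\delta)=\min\{c\in C(\delta):c>d\}-d
\qquad (0\le d\le N-2).
\]

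\emph{Step 2 (the inequality).} Since $\beta$ refines $\alpha$, we have $\operatorname{Cut}(\alpha)\subseteq\operatorname{Cut}(\beta)$, and hence $C(\alpha)\subseteq C(\beta)$. A minimum over a larger set is no larger. This gives $a_d(\beta)\le a_d(\alpha)$ for every $0\le d\le N-2$.

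\emph{Step 3 (equality when no new cut exceeds $1$).} Suppose $\operatorname{Cut}(\beta)\setminus\operatorname{Cut}(\alpha)\subseteq\{1\}$ and fix $d\ge1$. Every $c\in C(\beta)$ with $c>d$ satisfies $c\ge2$, so $c$ is not the possible new cut $1$. Therefore $c\in C(\alpha)$. The two minimizing sets coincide for such $d$, and so $a_d(\alpha)=a_d(\beta)$ for $1\le d\le N-2$.

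\emph{Step 4 (a new cut $c>1$).} Let $c\in\operatorname{Cut}(\beta)\setminus\operatorname{Cut}(\alpha)$ with $c>1$, and put $d_0=c-1$.
\begin{itemize}
\item Since $2\le c\le N-1$ for a proper prefix sum, we get $1\le d_0\le N-2$.
\item The smallest element of $C(\beta)$ exceeding $d_0$ is $c$ itself, so $a_{d_0}(\beta)=1$.
\item The smallest element of $C(\alpha)$ exceeding $d_0$ is an element $\ge c$. It cannot equal $c$, because $c\notin C(\alpha)$ (note $c\ne N$). Hence it is $\ge c+1$, and $a_{d_0}(\alpha)\ge2>1$.
\end{itemize}

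There is no real obstacle. The only point needing care is the bookkeeping in Step 4, where $c\neq N$ is used to ensure $c\notin C(\alpha)$, and the range check $1\le d_0\le N-2$.
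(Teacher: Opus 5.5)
Your proof is correct and follows the same route as the paper. Both arguments write $a_d$ as the gap from $d$ to the least prefix sum exceeding $d$, get the inequality from the inclusion of prefix sums under refinement, note that a new cut at $1$ has no effect for $d\ge1$, and show that a new cut $c>1$ gives $a_{c-1}(\beta)=1$ and $a_{c-1}(\alpha)\ge2$. Your explicit check that $c\ne N$, so that $c\notin C(\alpha)$, spells out the paper's one-line claim that the corresponding prefix sum of $\alpha$ is strictly greater than $c$.
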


\begin{proof}
Every prefix sum of $\alpha$ is a prefix sum of $\beta$.
The least prefix sum exceeding $d$ can therefore only decrease under
refinement, proving the first assertion by~\eqref{eq:ad-prefix}.
A new cut at $1$ does not change this least prefix sum when $d\ge1$.
If $c>1$ is a new cut, then the least prefix sum of $\beta$ exceeding
$c-1$ is $c$, whereas the corresponding prefix sum of $\alpha$ is
strictly greater than $c$. Since $c$ is proper, $c\le N-1$, giving
the stated range of $d_0$.
\end{proof}

For a composition $\alpha$ of $N$, we put $q=\lfloor(N-1)/2\rfloor$ and write
\[
\Gamma_\alpha(z)=\sum_{i=0}^{q}\gamma_{\alpha,i}z^i,
\qquad \gamma_{\alpha,0}=1.
\]

\begin{lemma}\label{lem:refinement-coefficients}
Let $\beta$ refine a composition $\alpha$ of $N\ge1$.
Then $P_\alpha(t)\preceq_\gamma P_\beta(t)$ and
$\gamma_{\alpha,0}=\gamma_{\beta,0}=1$.
The equality $P_\alpha(t)=P_\beta(t)$ holds if and only if
\[
\operatorname{Cut}(\beta)\setminus\operatorname{Cut}(\alpha)
\subseteq\{1\}.
\]
In every other case,
\[
\gamma_{\beta,i}>\gamma_{\alpha,i}
\]
for $1\le i\le\lfloor(N-1)/2\rfloor$.
\end{lemma}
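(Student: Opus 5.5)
We compare the two $\gamma$-recursions~\eqref{eq:gamma-prefix-rec} step by step, using the same degree parameters $d=0,1,\dots,N-2$ and the parameters $a_d(\alpha)$ and $a_d(\beta)$. For $N\le2$ the statement is trivial: $\Gamma=1$ and there are no nonconstant coefficients. For $N=2$ the only possible new cut is $1$. So assume $N\ge3$.

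The key observation is that each step is monotone in both its input and its parameter. By~\eqref{eq:gamma-coeff-op},
\[
\widehat\gamma_i=\gamma_i+\tfrac1a\bigl(i\gamma_i+2(d-2i+2)\gamma_{i-1}\bigr).
\]
The bracket is $[z^i]\mathcal D_d(\Gamma)\ge0$ by~\eqref{eq:Dd-positive}, and it is linear in $\Gamma$ with nonnegative coefficients. Hence if $\Gamma\le\widetilde\Gamma$ coefficientwise, both have nonnegative coefficients, and $0<b\le a$, then $\Phi_{d,a}(\Gamma)\le\Phi_{d,b}(\widetilde\Gamma)$ coefficientwise. By Lemma~\ref{lem:refinement-parameters}, $a_d(\beta)\le a_d(\alpha)$ for every $d$. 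Induction on $d$, starting from $\Gamma^{[-1]}=1$, therefore gives $\Gamma_\alpha^{[d]}\le\Gamma_\beta^{[d]}$ coefficientwise for all $d$. In particular $P_\alpha\preceq_\gamma P_\beta$. Both constant terms are $1$, since the constant coefficient is multiplied by $(a+0)/a=1$ at each step.

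For the equality case, suppose $\operatorname{Cut}(\beta)\setminus\operatorname{Cut}(\alpha)\subseteq\{1\}$. Then the parameters agree for $d\ge1$. At $d=0$ we have $\Phi_{0,a}(1)=1$ for every $a$ by~\eqref{eq:gamma-op}, since $\mathcal D_0(1)=0$. Hence the recursions coincide from $d=0$ onward, and $\Gamma_\alpha=\Gamma_\beta$, so $P_\alpha=P_\beta$.

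Otherwise Lemma~\ref{lem:refinement-parameters} supplies $d_0\in[1,N-2]$ with $a_{d_0}(\beta)=1<a_{d_0}(\alpha)$. At that step, $\Gamma_\beta^{[d_0]}-\Gamma_\alpha^{[d_0]}$ is at least $(1/b-1/a)\,\mathcal D_{d_0}(\Gamma_\alpha^{[d_0-1]})$, with $b=1<a$. Every intermediate $\gamma$-coefficient is positive, as noted after~\eqref{eq:Dd-positive}. So $[z^i]\mathcal D_{d_0}(\Gamma_\alpha^{[d_0-1]})>0$ for $1\le i\le\lfloor(d_0+1)/2\rfloor$: the term $2(d_0-2i+2)\gamma_{i-1}$ is positive because $d_0-2i+2\ge1$ in that range. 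This yields strict inequality at step $d_0$ for all $1\le i\le\lfloor(d_0+1)/2\rfloor$.

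It remains to propagate strictness and to reach the new top coefficients. Each later step preserves strict inequalities, because $\widehat\gamma_i$ contains $\tfrac{a+i}{a}\gamma_i$ with a positive multiplier, and the monotone comparison handles the remaining terms. When the degree increases at step $d$ with $d$ odd, the new top coefficient is $\widehat\gamma_{q}=\tfrac{2}{a}\gamma_{q-1}$, where $q=(d+1)/2$. This is strictly larger for $\beta$, since $\gamma_{q-1}$ is strict by induction when $q-1\ge1$, and $b\le a$.

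The main obstacle is index bookkeeping at the bottom. When the new top index is $q=1$, we have $\gamma_0=1$ on both sides, so strictness must come from the parameter instead. That happens only at $d=1$, which lies at or before $d_0$; if $d_0=1$, it is covered by the step-$d_0$ argument. Checking that every index $1\le i\le\lfloor(N-1)/2\rfloor$ becomes strict, either at step $d_0$ or when it first appears afterward, completes the proof.
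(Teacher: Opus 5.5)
Your proof is correct and follows essentially the same route as the paper: a coefficientwise induction along the $\gamma$-recursion using $a_d(\beta)\le a_d(\alpha)$, parameter-independence of $\Phi_{0,a}(1)$ for the equality case, and strictness created at the step $d_0=c-1$ where $a_{d_0}(\beta)=1<a_{d_0}(\alpha)$, then propagated by the positive multipliers and by the factor $2/b$ on each new top coefficient. Your joint monotonicity of $\Phi_{d,a}$ in input and parameter is the paper's identity $\Delta_d=\Phi_{d,b}(\Delta_{d-1})+(1/b-1/a)\mathcal D_d(\Gamma_\alpha^{[d-1]})$ in another form, and using only the term $2(d_0-2i+2)\gamma_{i-1}$ for positivity at step $d_0$ is a slightly simpler justification than the paper's two-case argument.
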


\begin{proof}
The assertions are immediate when $N=1$, so assume $N\ge2$.

\emph{Coefficient monotonicity.}
For $0\le d\le N-2$, put $a=a_d(\alpha)$, $b=a_d(\beta)$, and
$\Delta_d(z)=\Gamma_\beta^{[d]}(z)-\Gamma_\alpha^{[d]}(z)$, with
$\Delta_{-1}(z)=0$.
Lemma~\ref{lem:refinement-parameters} gives $b\le a$, and
recurrence~\eqref{eq:gamma-prefix-rec} expresses the difference as
\begin{equation}\label{eq:refinement-difference}
\Delta_d(z)
=\Phi_{d,b}\bigl(\Delta_{d-1}(z)\bigr)
+\left(\frac1b-\frac1a\right)
\mathcal D_d\bigl(\Gamma_\alpha^{[d-1]}(z)\bigr).
\end{equation}
By~\eqref{eq:gamma-coeff-op} and~\eqref{eq:Dd-positive}, both terms
on the right are coefficientwise nonnegative whenever $\Delta_{d-1}(z)$
is. Induction proves the weak coefficient inequalities. Every $\Phi_{d,a}$ preserves the constant coefficient,
so both final constant coefficients equal $1$.

\emph{Equality and propagation of strictness.}
If the only possible new cut is $1$, the parameters agree for every
$d\ge1$ by Lemma~\ref{lem:refinement-parameters}. Since
$\Phi_{0,a}(1)=1$ is independent of $a$, all partial
$\gamma$-polynomials agree, giving $P_\alpha(t)=P_\beta(t)$.

Otherwise, choose a new cut $c>1$ and let $d_0=c-1$.
Lemma~\ref{lem:refinement-parameters} gives
$1\le d_0\le N-2$ and $a_{d_0}(\beta)=1<a_{d_0}(\alpha)$.
By the positivity established after~\eqref{eq:gamma-coeff-op},
for $d\ge1$ and an intermediate polynomial
$\Gamma(z)=\sum_{i=0}^{\lfloor d/2\rfloor}\gamma_i z^i$, we have
\[
[z^i]\mathcal D_d(\Gamma(z))
=i\gamma_i+2(d-2i+2)\gamma_{i-1}>0
\qquad\left(1\le i\le\left\lfloor\frac{d+1}{2}\right\rfloor\right).
\]
For $1\le i\le\lfloor d/2\rfloor$, the first term is positive.
When $d$ is odd and $i=(d+1)/2$, the second term is
$2\gamma_{\lfloor d/2\rfloor}>0$.
It follows from~\eqref{eq:refinement-difference} that every
nonconstant coefficient of $\Delta_{d_0}(z)$ is positive.
At each later step, the positive multiplier of the existing coefficient
in~\eqref{eq:gamma-coeff-op} preserves this strict inequality.
When the degree increases, the new leading coefficient of the
difference is positive because the preceding leading coefficient
has multiplier $2/b>0$, while the second term
in~\eqref{eq:refinement-difference} remains nonnegative.
Since $d_0\ge1$, this proves the strict inequalities and the
equality classification.
\end{proof}

We now combine the coefficient and zero comparisons to prove
Theorem~\ref{thm:refinement}.

\begin{proof}[Proof of Theorem~\ref{thm:refinement}]
Lemma~\ref{lem:refinement-coefficients} gives the coefficient assertions
and reduces the equality classification to the cut sets.
The cut sets agree precisely when $\alpha=\beta$, and their difference is $\{1\}$
precisely when replacing the first part $r\ge2$ of $\alpha$ with $(1,r-1)$
gives $\beta$.

For the zero assertions, it suffices to consider $P_\alpha(t)\ne P_\beta(t)$.
Choose a new cut $c>1$ and put
$d_0=c-1$ as in Lemma~\ref{lem:refinement-parameters}.
At $d=0$ both partial $\gamma$-polynomials equal $1$.
Suppose $d\ge1$ and the zeros of
$\Gamma_\alpha^{[d-1]}(z)$ and $\Gamma_\beta^{[d-1]}(z)$ are
coordinatewise ordered. Since $\Phi_{d,a}$ is linear, we can normalize
both input polynomials to be monic without changing the zeros of their images.
Proposition~\ref{prop:gamma-root-monotonicity}, together with
$a_d(\beta)\le a_d(\alpha)$, therefore gives the same order for
the zeros of $\Gamma_\alpha^{[d]}(z)$ and $\Gamma_\beta^{[d]}(z)$.
Induction establishes the weak comparison at every step.

At step $d_0$, let $\boldsymbol\xi^\alpha$ and
$\boldsymbol\xi^\beta$ be the ordered vectors of zeros of
$\Gamma_\alpha^{[d_0-1]}(z)$ and $\Gamma_\beta^{[d_0-1]}(z)$,
respectively, and put $a=a_{d_0}(\alpha)$ and
$b=a_{d_0}(\beta)$. The same proposition gives
\[
\eta_i(\boldsymbol\xi^\alpha,a)
\le\eta_i(\boldsymbol\xi^\beta,a)
<\eta_i(\boldsymbol\xi^\beta,b)
\]
for every $i$. The first inequality uses the coordinatewise comparison,
and the second uses $b<a$. The same proposition propagates strictness
through every later step.
\end{proof}

The equality classification requires the refinement hypothesis.
For example, the incomparable compositions $(3,2)$ and $(4,1)$ both give
$P(t)=1+16t+36t^2+16t^3+t^4$.

Under a proper refinement outside the equality case, each
ordered zero in $(-\infty,-1)$ strictly decreases, whereas each ordered
zero in $(-1,0)$ strictly increases. Indeed, the derivative of
$t/(1+t)^2$ is $(1-t)/(1+t)^3$. This map is strictly decreasing on the
first interval and strictly increasing on the second, and maps each interval
bijectively onto $(-\infty,0)$. Each negative $\gamma$-zero has
one preimage in each interval. The left preimages occur in reverse
order and the right preimages in the same order, giving the assertion. When $N$ is even, the
common simple zero $-1$ remains fixed. For example,
$P_{(3)}(t)=1+3t+t^2$ and $P_{(2,1)}(t)=1+4t+t^2$ have nested rather
than interlacing pairs of zeros.

\subsection{Bounds from Narayana and Eulerian polynomials}

We apply Theorem~\ref{thm:refinement} to the coarsest and finest
compositions of $N$ to obtain these bounds.
Recall that
\[
P_M(t)=\sum_{k=1}^{N} c_{M,k}t^{k-1},
\qquad
c_{M,k}=\bigl|\{T\in\T_M:\Int(T)=k\}\bigr|,
\]
and put $q=\lfloor(N-1)/2\rfloor$. Write $\nu_i$, $\xi_i(M)$, and
$\varepsilon_i$ for the $i$th zeros, in increasing order, of
$\Gamma_{(N)}(z)$, $\Gamma_M(z)$, and $\Gamma_{\mathbf{1}_N}(z)$, respectively.

\begin{corollary}\label{cor:sandwich}
Let $M=\{1^{p_1},\ldots,m^{p_m}\}$ have total size $N\ge2$.  Then
\begin{gather*}
\mathrm{Nar}_N(t)\preceq_\gamma P_M(t)\preceq_\gamma \mathrm{Eul}_N(t),\\
\nu_i\le\xi_i(M)\le\varepsilon_i\qquad\text{for }1\le i\le q.
\end{gather*}
Moreover, $P_M(t)=\mathrm{Nar}_N(t)$ if and only if $(p_1,\ldots,p_m)$ is
$(N)$ or $(1,N-1)$, while $P_M(t)=\mathrm{Eul}_N(t)$ if and only if it is
$\mathbf{1}_N$ or $(2,\mathbf{1}_{N-2})$.  Outside the corresponding equality cases,
every nonconstant $\gamma$-coefficient inequality and every zero
inequality on that side is strict.
\end{corollary}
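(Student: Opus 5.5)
The plan is to apply Theorem~\ref{thm:refinement} twice, once to the pair $((N),\alpha)$ and once to the pair $(\alpha,\mathbf{1}_N)$, where $\alpha=(p_1,\ldots,p_m)$. Every composition of $N$ refines $(N)$, and $\mathbf{1}_N$ refines every composition of $N$. Theorem~\ref{thm:refinement} then gives $P_{(N)}(t)\preceq_\gamma P_M(t)\preceq_\gamma P_{\mathbf{1}_N}(t)$ together with $\nu_i\le\xi_i(M)\le\varepsilon_i$. To finish the first part, we identify $P_{(N)}(t)=\mathrm{Nar}_N(t)$ and $P_{\mathbf{1}_N}(t)=\mathrm{Eul}_N(t)$, as recalled in the introduction from~\cite{LMMZ21}. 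These are the multisets $\{1^N\}$ and $\{1,\ldots,N\}$, with polynomials of degree $N-1$. Strictness outside the equality cases also comes straight from Theorem~\ref{thm:refinement}: if $P_M\ne\mathrm{Nar}_N$, every nonconstant $\gamma$-coefficient inequality and every zero inequality on the left side is strict, and likewise on the right.

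For the equality classification, we use the criterion in Lemma~\ref{lem:refinement-coefficients}. When $\beta$ refines $\alpha$, we have $P_\alpha=P_\beta$ if and only if $\operatorname{Cut}(\beta)\setminus\operatorname{Cut}(\alpha)\subseteq\{1\}$.

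On the Narayana side we take $\alpha=(N)$, so $\operatorname{Cut}((N))=\varnothing$. Equality then holds exactly when $\operatorname{Cut}(\beta)\subseteq\{1\}$. This means $\beta=(N)$, or $\beta=(1,N-1)$ (which requires $N\ge2$ so that the part $N-1$ is positive).

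On the Eulerian side we take $\beta=\mathbf{1}_N$, whose cut set is $\{1,\ldots,N-1\}$. Equality holds exactly when $\operatorname{Cut}(\alpha)\supseteq\{2,\ldots,N-1\}$. The composition $\alpha$ is determined by its cut set, so there are two possibilities. If the cut set is all of $\{1,\ldots,N-1\}$, then $\alpha=\mathbf{1}_N$. If it is $\{2,\ldots,N-1\}$, then $\alpha=(2,\mathbf{1}_{N-2})$. The degenerate case $N=2$ gives $(2)$, which is consistent because $\mathrm{Nar}_2=\mathrm{Eul}_2=1+t$.

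The corollary as stated also mentions ordinary coefficient bounds, but the statement itself only records the $\gamma$-inequalities. If a coefficientwise comparison $c_{(N),k}\le c_{M,k}\le c_{\mathbf{1}_N,k}$ is wanted, it follows from $\gamma$-domination through the expansion
\[
P(t)=\sum_i\gamma_i t^i(1+t)^{N-1-2i},
\]
whose basis polynomials have nonnegative coefficients.

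No step here is a serious obstacle. The only points needing care are the translation between cut sets and compositions in the equality cases, and checking the small boundary case $N=2$.
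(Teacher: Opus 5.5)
Your proposal is correct and is the paper's own argument. You apply Theorem~\ref{thm:refinement} to the pairs $((N),\alpha)$ and $(\alpha,\mathbf{1}_N)$ via $P_{(N)}=\mathrm{Nar}_N$ and $P_{\mathbf{1}_N}=\mathrm{Eul}_N$, read off the equality cases from the cut-set criterion of Lemma~\ref{lem:refinement-coefficients} (including the $N=2$ check), and your remark on ordinary coefficient bounds agrees with the paragraph following the corollary.
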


\begin{proof}
The identities $P_{(N)}(t)=\mathrm{Nar}_N(t)$ and $P_{\mathbf{1}_N}(t)=\mathrm{Eul}_N(t)$
give the bounds by Theorem~\ref{thm:refinement}. Put
$\alpha=(p_1,\ldots,p_m)$. By Lemma~\ref{lem:refinement-coefficients},
equality in the lower bound holds
precisely when $\operatorname{Cut}(\alpha)$ is empty or equals
$\{1\}$, giving $(N)$ or $(1,N-1)$.
For the upper bound, equality holds precisely when
$\{1,\ldots,N-1\}\setminus\operatorname{Cut}(\alpha)$ is empty
or equals $\{1\}$, giving $\mathbf{1}_N$ or $(2,\mathbf{1}_{N-2})$.
Every other case introduces a cut greater than $1$, giving strictness
by Theorem~\ref{thm:refinement}.
\end{proof}

The same bounds hold for ordinary coefficients because every basis
element $t^i(1+t)^{N-1-2i}$ has nonnegative coefficients.  Explicitly,
\[
\frac1N\binom Nj\binom N{j+1}
\le c_{M,j+1}\le
\left\langle\begin{matrix}N\\ j\end{matrix}\right\rangle
\qquad(0\le j\le N-1).
\]
For $N\ge3$, outside the corresponding equality cases, the first
nonconstant $\gamma$-coefficient inequality is strict, and
$t(1+t)^{N-3}$ has positive coefficients at every interior degree.
Thus the ordinary coefficient inequalities on that side are strict
for $1\le j\le N-2$.

\begin{remark}\label{rem:PF}
The coefficient sequences of $P_M(t)$ and $\Gamma_M(z)$ are
finite P\'olya frequency sequences of infinite order, also called
$\mathrm{PF}_\infty$ sequences. Equivalently, their infinite Toeplitz matrices,
with zeros outside the finite support, are totally nonnegative.
This follows from their negative zeros and the
Aissen--Schoenberg--Whitney theorem~\cite{ASW52}.
For background on these sequences, see~\cite{Brenti89,WangYeh05}.
Newton's inequalities also imply strict log-concavity of both
coefficient sequences at every interior index.
\end{remark}

Table~\ref{tab:small-compositions} illustrates the coefficient bounds and
equality cases for all compositions of size at most $4$.
For each composition, direct tree enumeration and the factorization in
Proposition~\ref{prop:global-factorization} give the same polynomial.
The last column gives the total number of trees.

\begin{table}[htbp]
\centering
\caption{The polynomials for all compositions of $N\le4$.}
\label{tab:small-compositions}
\small\begin{tabular}{@{}ccllc@{}}
\toprule
$N$ & $\alpha$ & $P_\alpha(t)$ & $\Gamma_\alpha(z)$ & $P_\alpha(1)$ \\
\hline
1 & $(1)$ & $1$ & $1$ & 1 \\
2 & $(1,1)$ & $1+t$ & $1$ & 2 \\
2 & $(2)$ & $1+t$ & $1$ & 2 \\
3 & $(1,1,1)$ & $1+4t+t^{2}$ & $1+2z$ & 6 \\
3 & $(1,2)$ & $1+3t+t^{2}$ & $1+z$ & 5 \\
3 & $(2,1)$ & $1+4t+t^{2}$ & $1+2z$ & 6 \\
3 & $(3)$ & $1+3t+t^{2}$ & $1+z$ & 5 \\
4 & $(1,1,1,1)$ & $1+11t+11t^{2}+t^{3}$ & $1+8z$ & 24 \\
4 & $(1,1,2)$ & $1+8t+8t^{2}+t^{3}$ & $1+5z$ & 18 \\
4 & $(1,2,1)$ & $1+9t+9t^{2}+t^{3}$ & $1+6z$ & 20 \\
4 & $(1,3)$ & $1+6t+6t^{2}+t^{3}$ & $1+3z$ & 14 \\
4 & $(2,1,1)$ & $1+11t+11t^{2}+t^{3}$ & $1+8z$ & 24 \\
4 & $(2,2)$ & $1+8t+8t^{2}+t^{3}$ & $1+5z$ & 18 \\
4 & $(3,1)$ & $1+9t+9t^{2}+t^{3}$ & $1+6z$ & 20 \\
4 & $(4)$ & $1+6t+6t^{2}+t^{3}$ & $1+3z$ & 14 \\
\hline
\end{tabular}
\end{table}

\FloatBarrier

\section{Transition matrices and column interlacing}\label{sec:transitions}

In this section, we study how maximal-label insertion changes the
number of internal nodes. We identify the resulting transition matrix with the
coefficient matrix of the insertion operator and use its bidiagonal
factorization to prove total nonnegativity and characterize the
positive minors. We then prove that adjacent column polynomials have
strictly interlacing zeros and derive a finite-difference identity
for these polynomials.

These transition matrices connect the forest decomposition with total positivity
in enumerative combinatorics~\cite{Brenti95}. Related work includes
criteria for recursive matrices~\cite{ChenLiangWang15}, total positivity
of Narayana matrices~\cite{WangYang18}, and nonnegativity of all minors of the
refined Eulerian matrix arising from barycentric subdivision~\cite{MuWangZhu26}.

\subsection{Transition coefficients}

Throughout this section, $\binom{a}{b}=0$ when $b<0$ or $b>a$.
Fix a tree $T\in\T_M$ with $|M|=p$ and $\Int(T)=l$, so that $T$ has
$L=p+1-l$ leaves.  We weight the attached maximal-label forests by their
contribution to the number of internal nodes, using $y$ to mark new
nodes and $t$ to mark internal nodes among them.
In a nonempty $a$-tree with $r$ nodes, the numbers of internal nodes
and leaves sum to $r$.  Thus weighting the series from
Section~\ref{sec:forest} by internal nodes gives
\[
\tau(y,t)=\mathcal U(yt,t^{-1}).
\]
Equation~\eqref{eq:pure-tree-leaf} becomes
\begin{equation*}
\tau(y,t)=y\left(1+t\frac{\tau(y,t)}{1-\tau(y,t)}\right).
\end{equation*}
Accordingly, define
\[
\Omega(y,t)=\frac1{1-\tau(y,t)}
=\mathcal F(yt,t^{-1}),
\qquad
\Theta(y,t)=\frac{\tau(y,t)}{y}
=1+t\bigl(\Omega(y,t)-1\bigr).
\]
At an internal node of the base tree, the attached forest contributes
$\Omega(y,t)$.  At a leaf, the empty forest contributes $1$, while a
nonempty forest contributes $t(\Omega(y,t)-1)$, where the extra $t$ records that
the base leaf becomes internal.  Including the $l$ internal nodes of
$T$, the generating function for all attachments is
\begin{equation}\label{eq:forest-gf}
\mathcal Z_{p,l}(y,t)=t^l\Omega(y,t)^l\Theta(y,t)^L.
\end{equation}
It depends only on $(p,l)$, not on the shape of $T$.  For $r\ge0$ and
$0\le s\le r$, define
\begin{equation*}
K_{l+s,l}^{(p,r)}=[y^rt^{l+s}]\mathcal Z_{p,l}(y,t),
\end{equation*}
and set $K_{k,l}^{(p,r)}=0$ outside the range $l\le k\le l+r$.  The
maximal-label forest decomposition gives the transition relation
\begin{equation*}
c_{M^{(r)},k}=\sum_{l=1}^p c_{M,l}K_{k,l}^{(p,r)}
\qquad(1\le k\le p+r).
\end{equation*}
Lagrange inversion gives a positive form of the coefficients in
Lin, Xu, and Zhao's recurrence~\cite[Theorem~4.1]{LXZ22}.

\begin{proposition}\label{prop:positive-transition}
Let $p\ge1$, $r\ge1$, $1\le l\le p$, and $L=p+1-l$.  For $0\le s\le r$,
\begin{equation}
K_{l+s,l}^{(p,r)}
=\binom{L+r}{s}\binom{l+r}{r-s}
\frac{l(L+r-s)+Ls}{(l+r)(L+r)}.
\label{eq:transition-positive}
\end{equation}
\end{proposition}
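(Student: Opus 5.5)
We extract $[y^r t^{l+s}]$ from \eqref{eq:forest-gf} by Lagrange inversion in $y$, treating $t$ as a parameter. Since $\Theta=\tau/y$, we have $\tau=y\Theta$. Using $\Omega=1/(1-\tau)$, the functional equation becomes
\[
\tau=y\,\phi(\tau),
\qquad
\phi(w)=1+\frac{tw}{1-w}=\frac{1-(1-t)w}{1-w}.
\]
Hence $\Theta=\phi(\tau)$, and
\[
\mathcal Z_{p,l}=t^l\,\Psi(\tau),
\qquad
\Psi(w)=(1-w)^{-l}\phi(w)^L.
\]
The Lagrange--B\"urmann formula (as in the proof of Proposition~\ref{prop:block-differential}) gives, for $r\ge1$,
\[
[y^r]\Psi(\tau)=\frac1r[w^{r-1}]\Psi'(w)\phi(w)^r.
\]
Better still, we can use the variant $[y^r]\Psi(\tau)=[w^r]\Psi(w)\phi(w)^r\bigl(1-w\phi'(w)/\phi(w)\bigr)$, which avoids differentiating a product.

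Next we compute the ingredients explicitly. Writing $\phi(w)^{L+r}=(1-w+tw)^{L+r}(1-w)^{-(L+r)}$, we have
\[
\Psi\phi^r=(1-w+tw)^{L+r}(1-w)^{-(l+L+r)},
\qquad
\frac{w\phi'}{\phi}=\frac{tw}{(1-w)(1-w+tw)}.
\]
Therefore
\[
[y^r]\mathcal Z_{p,l}=t^l[w^r]\Bigl((1-w+tw)^{L+r}(1-w)^{-(v+r)}-tw\,(1-w+tw)^{L+r-1}(1-w)^{-(v+r)-1}\Bigr),
\]
where $v=l+L=p+1$. We expand $(1-w+tw)^{L+r}=\sum_s\binom{L+r}{s}t^sw^s(1-w)^{L+r-s}$. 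The coefficient of $t^s$ then becomes
\[
\binom{L+r}{s}[w^{r-s}](1-w)^{-(l-s)}
\]
in the first term and
\[
\binom{L+r-1}{s-1}[w^{r-s}](1-w)^{-(l-s+1)}
\]
in the second, after adjusting the powers of $1-w$. This yields
\[
\binom{L+r}{s}\binom{r-1}{r-s}-\binom{L+r-1}{s-1}\binom{r}{r-s},
\]
up to sign conventions for the negative-binomial coefficients. This looks suspicious, since it should equal the positive expression in \eqref{eq:transition-positive}. So the main obstacle is getting the exponent bookkeeping right.

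I would double-check the bookkeeping with $\Omega^l\Theta^L$ and $\Theta=1+t(\Omega-1)$: with $\Omega=1/(1-w)$ we get $\Theta=(1-w+tw)/(1-w)$, consistent with the above. I would then recompute the power of $1-w$ carefully. The total exponent is $l+(L+r)-(L+r-s)=l+s$, so the first term has factor $(1-w)^{-(l+s)}$. Its coefficient is
\[
[w^{r-s}](1-w)^{-(l+s)}=\binom{l+r-1}{r-s},
\]
and similarly the second term contributes $\binom{L+r-1}{s-1}\binom{l+r}{r-s}$. Thus
\[
K_{l+s,l}^{(p,r)}=\binom{L+r}{s}\binom{l+r-1}{r-s}-\binom{L+r-1}{s-1}\binom{l+r}{r-s}.
\]
The sign of the second term must also be checked against the variant Lagrange formula. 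If it has the wrong sign, the difference is not manifestly positive, and I would instead use the derivative form $\frac1r[w^{r-1}]\Psi'\phi^r$. In that form $\Psi'=\Psi\bigl(l/(1-w)+Lt/((1-w)(1-w+tw))\bigr)$, which gives a sum of two positive terms:
\[
\frac1r\Bigl(l\binom{L+r}{s}\binom{l+r-1}{r-s-1}\cdots+Lt\cdots\Bigr).
\]

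Finally, using $\binom{L+r}{s}=\frac{L+r}{L+r-s}\binom{L+r-1}{s}$-type identities, we factor out $\binom{L+r}{s}\binom{l+r}{r-s}$. In the derivative form the two terms carry relative weights $\frac{l(r-s)}{r(l+r)}\cdot\frac{L+r-s}{\ldots}$ and $\frac{Ls}{r(L+r)}\cdots$, and algebraic simplification should collapse them into $\frac{l(L+r-s)+Ls}{(l+r)(L+r)}$. I would verify the final formula against a small case: $p=1$, $l=1$, $L=1$, $r=1$ should give $K_{1,1}=1$ and $K_{2,1}=1$, since the new node is attached either to the root (root stays the only internal node; new leaf) or under leaf $1$. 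The formula gives $s=0$: $1\cdot2\cdot\frac{2}{4}=1$ and $s=1$: $2\cdot1\cdot\frac{2}{4}=1$. This confirms the target.
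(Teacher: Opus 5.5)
Your approach is the paper's: write $\tau=y\phi(\tau)$ with $\phi(w)=1+tw/(1-w)$, so that $t^{-l}\mathcal Z_{p,l}=(1-\tau)^{-l}\phi(\tau)^L$, and apply Lagrange inversion in $y$. Your corrected bookkeeping in the variant form is right, and so is the minus sign, since the variant reads $[y^n]H(\tau)=[w^n]H(w)\phi(w)^{n-1}\bigl(\phi(w)-w\phi'(w)\bigr)$. Hence
\[
K_{l+s,l}^{(p,r)}=\binom{L+r}{s}\binom{l+r-1}{r-s}-\binom{L+r-1}{s-1}\binom{l+r}{r-s}
\]
is a valid identity. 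The proposal falls short only because you then distrust this formula and leave the derivation unfinished, with the ``$\cdots$'' and the ``should collapse.''

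The identity does not need to be manifestly positive, and the last step is one line. By $\binom{l+r-1}{r-s}=\frac{l+s}{l+r}\binom{l+r}{r-s}$ and $\binom{L+r-1}{s-1}=\frac{s}{L+r}\binom{L+r}{s}$, both of which also hold at $s=0$ and $s=r$, the difference equals $\binom{L+r}{s}\binom{l+r}{r-s}\bigl(\frac{l+s}{l+r}-\frac{s}{L+r}\bigr)$. The identity $(l+s)(L+r)-s(l+r)=l(L+r-s)+Ls$ then gives \eqref{eq:transition-positive}. Your difference is exactly the Lin--Xu--Zhao expression that the paper matches with \eqref{eq:transition-positive} right after the proposition. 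So your variant reaches the target as directly as the paper's derivative form, and it yields that comparison for free.

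If you follow the derivative form instead, as the paper does, the two terms are $\frac lr\binom{L+r}{s}\binom{l+r-1}{r-s-1}$ and $\frac Lr\binom{L+r-1}{s-1}\binom{l+r}{r-s}$. After you factor out $\binom{L+r}{s}\binom{l+r}{r-s}$, their weights are $\frac{l(r-s)}{r(l+r)}$ and $\frac{Ls}{r(L+r)}$. Drop the stray extra factor $(L+r-s)/\cdots$ in your last paragraph; with it, the weights would not sum to the target.
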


\begin{proof}
Put $\phi(u)=1+tu/(1-u)$ and
$\mathcal V(u)=(1-u)^{-l}\phi(u)^L$. Then
$\tau(y,t)=y\phi(\tau(y,t))$ and
$t^{-l}\mathcal Z_{p,l}(y,t)=\mathcal V(\tau(y,t))$.
Lagrange inversion gives
\[
[y^r]\mathcal V(\tau(y,t))
=\frac1r[u^{r-1}]\left(
l(1-u)^{-l-1}\phi(u)^{L+r}
+Lt(1-u)^{-l-2}\phi(u)^{L+r-1}
\right).
\]
Expanding the powers of $\phi(u)=1+tu/(1-u)$ and extracting the
coefficient of $t^s$ yields
\begin{equation}\label{eq:transition-sum}
\begin{aligned}
K_{l+s,l}^{(p,r)}
={}&\frac lr\binom{L+r}{s}\binom{l+r-1}{r-s-1}\\
&+\frac Lr\binom{L+r-1}{s-1}\binom{l+r}{r-s}.
\end{aligned}
\end{equation}
The zero convention covers $s=0$ and $s=r$. Factor out
$\binom{L+r}{s}\binom{l+r}{r-s}$, using
\[
\binom{l+r-1}{r-s-1}=\frac{r-s}{l+r}\binom{l+r}{r-s},
\qquad
\binom{L+r-1}{s-1}=\frac{s}{L+r}\binom{L+r}{s}.
\]
The remaining factor simplifies as
\[
\frac{l(r-s)}{r(l+r)}+\frac{Ls}{r(L+r)}
=\frac{l(L+r-s)+Ls}{(l+r)(L+r)},
\]
which gives~\eqref{eq:transition-positive}.
\end{proof}

\begin{example}\label{ex:transition-matrix}
For $p=3$ and $r=2$, we index the rows of $K^{(3,2)}$ by
$k=1,\ldots,5$ and its columns by $l=1,2,3$. Formula~\eqref{eq:transition-positive}
gives
\[
K^{(3,2)}=\begin{pmatrix}1 & 0 & 0\\7 & 3 & 0\\6 & 8 & 6\\0 & 3 & 7\\0 & 0 & 1\end{pmatrix}.
\]
The minor on rows $2,3$ and columns $1,2$ is
$7\cdot8-3\cdot6=38$. Each column counts extensions of a fixed base tree.
\end{example}

Setting $p_n=r$ and $p'=p+r$ in the recurrence of
Lin, Xu, and Zhao~\cite[Theorem~4.1]{LXZ22}, we obtain
\begin{align*}
K_{k,l}^{(p,r)}
={}&\binom{r+l-1}{k-1}\binom{p+r+1-l}{k-l}
\\
&-\binom{r+l}{k}\binom{p+r-l}{k-l-1}.
\end{align*}
Indeed, after setting $k=l+s$ and $L=p+1-l$, the right-hand side factors as
\[
\binom{l+r}{r-s}\binom{L+r}{s}
\left(\frac{l+s}{l+r}-\frac{s}{L+r}\right),
\]
which equals~\eqref{eq:transition-positive}. Every entry in the
band $l\le k\le l+r$ is positive.

We next identify the transition matrix with the coefficient matrix
of the block operator. Let $\mathcal B_{p,r}$ denote the operator
in~\eqref{eq:block-univ-r1} for $r=1$ and in~\eqref{eq:block-univ}
for $r\ge2$, acting on polynomials of degree at most $p-1$.
Its action on a monomial is as follows.

\begin{lemma}\label{lem:operator-transition-matrix}
For $p\ge1$, $r\ge1$, and $1\le l\le p$,
\begin{equation*}
\mathcal B_{p,r}(t^{l-1})
=\sum_{s=0}^r K_{l+s,l}^{(p,r)}t^{l+s-1}.
\end{equation*}
Consequently, the coefficient matrix of $\mathcal B_{p,r}$ in the monomial
bases $(1,t,\ldots,t^{p-1})$ and $(1,t,\ldots,t^{p+r-1})$ is
$K^{(p,r)}$.
\end{lemma}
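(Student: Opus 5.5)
The plan is to compute $\mathcal B_{p,r}(t^{l-1})$ with the same generating-function machinery that produced $K^{(p,r)}$, namely Proposition~\ref{prop:maximal-block} and its conjugated form in Lemma~\ref{lem:mobius-block}. The operator identities \eqref{eq:block-univ-r1} and \eqref{eq:block-univ} came from Proposition~\ref{prop:block-differential} via Lemma~\ref{lem:A-E-conjugacy}, and Proposition~\ref{prop:block-differential} came from \eqref{eq:maximal-block} by Lagrange--B\"urmann inversion. Every step in that chain is linear in the input polynomial. So I will prove, for an arbitrary polynomial $f$ of degree at most $p-1$, the operator identity
\[
\sum_{r\ge0}u^r\,t^{p+r}\,\mathcal B_{p,r}(f)(1/t)
=\mathcal F(u,t)^{p+1}\,t^pf(1/t)\Big|_{t\mapsto 1+(t-1)/\mathcal F(u,t)},
\]
with $\mathcal B_{p,0}$ the identity. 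Here $t^pf(1/t)$ plays the role of $A_M$ via \eqref{eq:leaf-internal}, and $v=p+1$ is a fixed parameter rather than $|M|+1$. The derivation of \eqref{eq:block-differential} and \eqref{eq:G-block-factorization} never used that $A_M$ counted trees, only the formula, so it reruns verbatim with $H(x)=2^{-(p+1)}(2x+1)^{p+1}t^pf(1/t)|_{t=\mu(x)}$.

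One point needs care. Proposition~\ref{prop:differential-factorization} requires $H=QG$, so it applies to inputs with $H$ divisible by $Q$. I must check that this holds for $H$ built from any $f$ of degree at most $p-1$. With $t=\mu(x)$, the zero $x=\tfrac12$ corresponds to $t=0$, where $t^pf(1/t)$ vanishes because $\deg f\le p-1$. The zero $x=-\tfrac12$ comes from the prefactor $(2x+1)^{p+1}$ against the degree-$p$ polynomial $t^pf(1/t)$ in $\mu$. So $H=QG$ with $G=\mathscr C_{p-1}(f)$, and Lemma~\ref{lem:A-E-conjugacy} yields $\mathcal B_{p,r}(f)$ on the left.

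Next I specialize to $f=t^{l-1}$, so $t^pf(1/t)=t^{p+1-l}=t^L$. The right side becomes $\mathcal F^{l}(t+\mathcal F-1)^L$, which is exactly the weight of extensions of a fixed base tree with $L$ leaves from the proof of Proposition~\ref{prop:maximal-block}. By definition $K^{(p,r)}$ extracts these extensions by internal-node count. After reversing to internal nodes via $\Int+\Leaf=p+r+1$, the coefficient of $u^r$ is $\sum_s K^{(p,r)}_{l+s,l}t^{p+r+1-(l+s)}$. Comparing with the left side gives $t^{p+r}\mathcal B_{p,r}(t^{l-1})(1/t)=\sum_s K_{l+s,l}t^{p+r-l-s}$, and inverting $t\mapsto1/t$ yields the claim. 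The consistency of this internal-node weighting with $\mathcal Z_{p,l}=t^l\Omega^l\Theta^L$ is just the substitution $u=yt$, $t\mapsto t^{-1}$ already recorded.

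The main obstacle is the bookkeeping in the first step. I must verify that the substitutions in Lemma~\ref{lem:mobius-block} and the degree shift $\mathscr C_{p-1}\to\mathscr C_{p+r-1}$ match the reversal exponent $p+r$ uniformly for every monomial input, not merely for the polynomial $P_M$. As a fallback, I would avoid generating functions and check the identity directly. This means computing $\E_{d,a}(t^{j})=\frac1a\bigl((a+j)t^j+(d+a-j)t^{j+1}\bigr)$ and composing the bidiagonal actions for small $r$ against \eqref{eq:transition-positive}. That check is not a full proof, but it would confirm normalizations such as the scalar $1/r!$ cancellation. The final statement about coefficient matrices then follows immediately by linearity.
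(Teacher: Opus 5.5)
Your argument is correct, but it reaches $K^{(p,r)}$ by a different route than the paper.

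\textbf{The paper's route.} The paper stays on the differential side. It writes $Q(x)\mathscr C_{p-1}(t^{l-1})=2^{-(p+1)}U^LV^l$ with $U=2x-1$, $V=2x+1$, and applies the operator of Proposition~\ref{prop:differential-factorization}. It then uses Leibniz's rule to extract the coefficient of $U^{L+r-s}V^{l+s}$ and matches it with the Lagrange-inversion sum \eqref{eq:transition-sum} from Proposition~\ref{prop:positive-transition}.

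\textbf{Your route.} You rerun the chain Lemma~\ref{lem:mobius-block}, Proposition~\ref{prop:block-differential}, Proposition~\ref{prop:differential-factorization}, Lemma~\ref{lem:A-E-conjugacy} for an arbitrary input $f$ of degree at most $p-1$. Every step is formal and linear, and your divisibility check $H=Q\,\mathscr C_{p-1}(f)$ is correct. Thus $\mathcal B_{p,r}$ becomes the linear extension of the forest-attachment map of Proposition~\ref{prop:maximal-block}, with $t^{l-1}$ playing the role of a single base tree with $L$ leaves.

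The bookkeeping you flag closes by applying the same identity at level $p+r$. Since $\deg\mathcal B_{p,r}(f)\le p+r-1$, both $[\eta^r]H(X)$ and $Q\,\mathscr C_{p+r-1}(\mathcal B_{p,r}(f))$ equal $2^{-(p+r+1)}(2x+1)^{p+r+1}$ times a polynomial in $t$ evaluated at $t=\mu(x)$. Because $\mu$ is nonconstant, the two polynomials coincide. No palindromicity is used, and your small-$r$ fallback is unnecessary. The link to the paper's definition of $K^{(p,r)}$ is $\mathcal Z_{p,l}(y,t)=t^{p+1}W(yt,t^{-1})$ for $W(u,t)=\mathcal F(u,t)^l\bigl(t+\mathcal F(u,t)-1\bigr)^L$, as you indicate.

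\textbf{What each buys.} Your route is more conceptual and never needs the explicit coefficients \eqref{eq:transition-sum}. The price is re-verifying that Lemma~\ref{lem:mobius-block} and Proposition~\ref{prop:block-differential}, which are stated for $A_M$, hold for arbitrary inputs. The paper's route is a short direct computation that only uses results already stated for arbitrary polynomials, but it presupposes the formula \eqref{eq:transition-sum}.

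\textbf{One slip.} In your comparison step the exponent should be $p+r+1-l-s$, as in your preceding sentence. With $t^{p+r-l-s}$, the inversion $t\mapsto 1/t$ would give $\sum_sK_{l+s,l}^{(p,r)}t^{l+s}$, which is off by one.
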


\begin{proof}
Put $U=2x-1$, $V=2x+1$, and $L=p+1-l$.  For $f(t)=t^{l-1}$,
\[
Q(x)\mathscr C_{p-1}(f(t))(x)
=2^{-(p+1)}U^L V^l.
\]
Repeated use of Lemma~\ref{lem:A-E-conjugacy}, together with
Proposition~\ref{prop:differential-factorization}, gives
\begin{equation*}
Q(x)\mathscr C_{p+r-1}\bigl(\mathcal B_{p,r}(f(t))\bigr)(x)
=
\frac1{r!}\left(\frac d{dx}\right)^{r-1}
\left(
Q(x)^r\frac d{dx}
\bigl(Q(x)\mathscr C_{p-1}(f(t))(x)\bigr)
\right).
\end{equation*}
For $0\le s\le r$,
\[
Q(x)\mathscr C_{p+r-1}(t^{l+s-1})(x)
=2^{-(p+r+1)}U^{L+r-s}V^{l+s}.
\]
The inner derivative differentiates either $U^L$ or $V^l$.
After multiplication by $Q(x)^r$, obtaining
$U^{L+r-s}V^{l+s}$ requires the remaining $r-1$ derivatives
to act on the power of $U$ exactly $s-1$ or $s$ times,
respectively. A choice outside $0,\ldots,r-1$ contributes zero.
Since $U'=V'=2$ and $Q(x)=UV/4$, the powers of $2$ combine
to $2^{-(p+r+1)}$, matching the transformed monomial above.
Leibniz's rule therefore gives the right-hand side of
\eqref{eq:transition-sum} as the coefficient of $t^{l+s-1}$
in $\mathcal B_{p,r}(t^{l-1})$, proving that this coefficient
is $K_{l+s,l}^{(p,r)}$.
Since~\eqref{eq:E-def} only preserves or raises a monomial exponent
by one, all other coefficients vanish, which completes the proof.
\end{proof}

\subsection{Total nonnegativity}

We first prove total nonnegativity using the bidiagonal factorization
and then determine which minors are positive.
See~\cite[Chapter~2]{FallatJohnson11} and~\cite{Pinkus10} for background on such factorizations.
Let $f(t)=\sum_{j=0}^dc_jt^j$ and set $c_j=0$ for
$j\notin\{0,1,\ldots,d\}$.  Then, for $0\le j\le d+1$,
\begin{equation*}
[t^j]\E_{d,a}(f(t))
=\frac{j+a}{a}c_j
+\frac{d+a-j+1}{a}c_{j-1}.
\end{equation*}
Thus, in the monomial bases $(1,t,\ldots,t^d)$ and
$(1,t,\ldots,t^{d+1})$, the coefficient matrix of $\E_{d,a}$ is a
rectangular lower-bidiagonal matrix with positive entries on its
two diagonals. For example, at $d=2$ it is
\[
\frac1a\begin{pmatrix}
a&0&0\\
a+2&a+1&0\\
0&a+1&a+2\\
0&0&a
\end{pmatrix}.
\]

We now use the bidiagonal factorization to prove Theorem~\ref{thm:intro-TN}.

\begin{proof}[Proof of Theorem~\ref{thm:intro-TN}]
By Lemma~\ref{lem:operator-transition-matrix}, $K^{(p,r)}$ is the
product of these bidiagonal matrices. Each factor has positive entries
in column $j$ only in rows $j$ and $j+1$. Hence the bipartite
support graph of any square submatrix is a subgraph of a path and has at
most one perfect matching.  If such a matching exists, it is
order-preserving, so the determinant is the product of the matched positive
entries.  Otherwise the determinant is zero, so every first-order factor
is totally nonnegative. Applying Cauchy--Binet to their product, we conclude that
$K^{(p,r)}$ is totally nonnegative.
\end{proof}

\begin{proposition}\label{prop:positive-minors}
For $p,r\ge1$, a nonempty minor of $K^{(p,r)}$ on rows
$k_1<\cdots<k_h$ and columns $l_1<\cdots<l_h$ is positive if and only if
\[
l_i\le k_i\le l_i+r\qquad\text{for }1\le i\le h.
\]
\end{proposition}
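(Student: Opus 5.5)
Necessity comes first. If $k_i<l_i$ for some $i$, then for every $i'\ge i$ and $j\le i$ we have $k_{j}\le k_i<l_i\le l_{i'}$, so the entries in rows $k_1,\ldots,k_i$ and columns $l_i,\ldots,l_h$ all vanish, because $K^{(p,r)}$ is supported on the band $l\le k\le l+r$. That is an $i\times(h-i+1)$ zero block in an $h\times h$ matrix, and since $i+(h-i+1)>h$, the König--Frobenius criterion forces the determinant to be $0$. The case $k_i>l_i+r$ is symmetric: rows $k_i,\ldots,k_h$ and columns $l_1,\ldots,l_i$ give a zero block of size $(h-i+1)\times i$.

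For sufficiency, I will use the bidiagonal factorization $K^{(p,r)}=E_{r}\cdots E_{1}$ from Lemma~\ref{lem:operator-transition-matrix}. Here $E_\nu$ is the $(p+\nu)\times(p+\nu-1)$ lower-bidiagonal coefficient matrix of the $\nu$th factor $\E$, and it has positive entries exactly at positions $(j,j)$ and $(j+1,j)$. Cauchy--Binet expands the minor as a sum over chains of index sets
\[
\{l_i\}=S_0,\ S_1,\ \ldots,\ S_r=\{k_i\}
\]
of products of minors of the $E_\nu$. The proof of Theorem~\ref{thm:intro-TN} showed that every such minor is nonnegative. It also showed that the minor of $E_\nu$ on rows $S_\nu$ and columns $S_{\nu-1}$ is positive exactly when the order-preserving matching exists, that is, when each element of $S_\nu$ equals or exceeds by one the corresponding element of $S_{\nu-1}$. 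It therefore suffices to exhibit one chain in which every step raises each coordinate by $0$ or $1$ while keeping strict increase.

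This amounts to a lattice path construction. We need $r$ steps taking $l=(l_1<\cdots<l_h)$ to $k=(k_1<\cdots<k_h)$, with each coordinate increasing by $0$ or $1$ per step and the vectors strictly increasing at every time. The total increments $\delta_i=k_i-l_i$ lie in $[0,r]$ by hypothesis. I will choose the greedy schedule: coordinate $i$ increments at steps $1,\ldots,\delta_i$. Then the value of coordinate $i$ at time $\nu$ is
\[
x_i(\nu)=l_i+\min(\nu,\delta_i).
\]
The main obstacle is checking that $x_i(\nu)<x_{i+1}(\nu)$ for every $\nu$.

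Write $g=l_{i+1}-l_i\ge1$. Both coordinates are nondecreasing, and at the endpoints $x_i(0)<x_{i+1}(0)$ and $x_i(r)=k_i<k_{i+1}=x_{i+1}(r)$. The gap $x_{i+1}(\nu)-x_i(\nu)$ changes by at most $1$ per step in the middle range. If $\delta_{i+1}\ge\delta_i$, the gap is nondecreasing, so it stays at least $g\ge1$. If $\delta_{i+1}<\delta_i$, the gap stays at $g$ until time $\delta_{i+1}$ and then decreases by one per step until time $\delta_i$, where it equals $k_{i+1}-k_i\ge1$. Since it only decreases on that interval, it stays at least $1$ throughout. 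Hence the chain is valid and yields a strictly positive Cauchy--Binet term, while every other term is nonnegative. The minor is therefore positive.

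Finally, the index ranges are consistent: intermediate indices stay within $1,\ldots,p+\nu$ because each step raises any index by at most one.
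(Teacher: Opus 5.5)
Your proposal is correct and follows the paper's proof essentially step for step. For necessity, you use the same band-support zero block, invoking Frobenius--K\"onig where the paper argues that the affected rows are linearly dependent. For sufficiency, you use the same Cauchy--Binet chain through the bidiagonal factors: your greedy schedule $x_i(\nu)=l_i+\min(\nu,\delta_i)$ is exactly the paper's $u_i(s)=\min(l_i+s,k_i)$. The only difference is that you check strict increase by a case analysis on the gap, whereas the paper reads it off directly from $u_i(s)<\min(l_{i+1}+s,k_{i+1})$.
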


\begin{proof}
The band support
$K_{k,l}^{(p,r)}=0$ unless $l\le k\le l+r$ gives the necessity of the
stated inequalities.  Indeed, if $k_i<l_i$ for some $i$, then
the first $i$ selected rows vanish on the last $h-i+1$ selected columns,
so these rows have support only in the first $i-1$ columns and are
linearly dependent.  If $k_i>l_i+r$, then the last $h-i+1$ selected rows
vanish on the first $i$ selected columns and have support only in the last
$h-i$ columns.  In either case the minor is zero.

Conversely, assume that $l_i\le k_i\le l_i+r$ for every $i$.
We construct a positive term in the Cauchy--Binet expansion of the minor.
List the $r$ first-order factors in the order in which they act, and let
$B_s$ be the matrix of the $s$th factor.  We index its columns by
$\{1,\ldots,p+s-1\}$ and its rows by $\{1,\ldots,p+s\}$. Its positive
entries occur precisely in positions $(j,j)$ and $(j+1,j)$.  Thus
$K^{(p,r)}=B_r\cdots B_1$.  For $0\le s\le r$, put
\[
u_i(s)=\min(l_i+s,k_i),
\qquad I_s=\{u_i(s):1\le i\le h\}.
\]
We have $1\le u_i(s)\le l_i+s\le p+s$. Both selected index
sequences are strictly increasing, and for $i<h$,
\[
u_i(s)\le l_i+s<l_{i+1}+s,
\qquad
u_i(s)\le k_i<k_{i+1}.
\]
Hence $u_i(s)<\min(l_{i+1}+s,k_{i+1})=u_{i+1}(s)$.
The band inequalities give $u_i(0)=l_i$ and $u_i(r)=k_i$.
For $1\le s\le r$, let $B_s[I_s,I_{s-1}]$ denote
the submatrix of $B_s$ with row indices $I_s$ and column indices
$I_{s-1}$, both listed in increasing order.
Since $u_i(s)-u_i(s-1)$ is either $0$ or $1$, pairing column
$u_i(s-1)$ with row $u_i(s)$ gives an
order-preserving matching on positive entries in $B_s[I_s,I_{s-1}]$.
By the support-graph argument above, this is its unique perfect
matching. Thus the determinant expansion has exactly one nonzero
term, whose sign is positive because the matching preserves order.
The determinant is therefore the product of positive matched entries.

In the iterated Cauchy--Binet expansion of the chosen minor of
$B_r\cdots B_1$, the chain
$I_0,I_1,\ldots,I_r$ therefore contributes a strictly positive term, while
all other terms are nonnegative.  Since $I_0=\{l_1,\ldots,l_h\}$ and
$I_r=\{k_1,\ldots,k_h\}$, the desired minor of $K^{(p,r)}$ is strictly
positive.
\end{proof}

In particular, for $l_1<l_2$, the nonnegativity of the $2\times2$ minors implies
that $K_{k,l_2}^{(p,r)}/K_{k,l_1}^{(p,r)}$ is nondecreasing in $k$
on the common positive support of the two columns.
Since products of totally nonnegative matrices remain totally
nonnegative, these ratio comparisons persist under successive block
insertions.

\subsection{Column polynomials}

For a fixed initial internal-node count, the entries of a column
enumerate the possible increases in this count. For the $l$th
column, define
\begin{equation*}
R_{p,l,r}(t)=\sum_{s=0}^rK_{l+s,l}^{(p,r)}t^s,
\qquad r\ge1,
\end{equation*}
and adopt the convention $R_{p,l,0}(t)=1$.
Here the base tree has $p+1$ nodes, of which $l$ are internal.
The endpoint coefficients and the value at $1$ are
\begin{equation*}
\begin{aligned}
R_{p,l,r}(0)&=\binom{l+r-1}{r},
&[t^r]R_{p,l,r}(t)&=\binom{L+r-1}{r},\\
R_{p,l,r}(1)&=\frac{p+1}{p+r+1}\binom{p+2r}{r},
&& L=p+1-l.
\end{aligned}
\end{equation*}
The endpoint formulas follow from~\eqref{eq:transition-positive}.
For the last formula, set $t=1$ in~\eqref{eq:forest-gf}.  Then
$\Omega(y,1)=\Theta(y,1)=C(y)$, where $C(y)=1+yC(y)^2$, and the forest
series is $C(y)^{p+1}$. Lagrange inversion gives
\[
R_{p,l,r}(1)=[y^r]C(y)^{p+1}
=\frac{p+1}{p+2r+1}\binom{p+2r+1}{r}
=\frac{p+1}{p+r+1}\binom{p+2r}{r},
\]
which is independent of $l$. These identities also hold for $r=0$.

Interchanging $l$ and $L$ and replacing $s$ by $r-s$ in
\eqref{eq:transition-positive} gives the symmetry
\[
R_{p,l,r}(t)=t^rR_{p,p+1-l,r}(1/t).
\]

\begin{proposition}\label{prop:column-rooted}
For $p,r\ge1$ and $1\le l\le p$, the polynomial $R_{p,l,r}(t)$ has
$r$ simple negative zeros.
\end{proposition}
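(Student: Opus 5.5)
By Lemma~\ref{lem:operator-transition-matrix}, $t^{l-1}R_{p,l,r}(t)=\mathcal B_{p,r}(t^{l-1})$. Here $\mathcal B_{p,r}$ is the composition $\E_{p+r-2,2}\circ\cdots\circ\E_{p,r}\circ\E_{p-1,1}$ from~\eqref{eq:block-univ}, or $\E_{p-1,1}$ alone when $r=1$. The plan is to start from the monomial $f_0(t)=t^{l-1}$ and apply these $r$ first-order factors one at a time. After each step we factor out the power of $t$ and apply Lemma~\ref{lem:E-root-insertion}, via~\eqref{eq:E-root-action}, which is exactly why that lemma allows a degree parameter $d_*$ larger than the input degree. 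Each step should add one simple negative zero to the part coprime to $t$, so after $r$ steps $R_{p,l,r}(t)$ has $r$ simple negative zeros.

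Concretely, write the $j$th intermediate polynomial ($0\le j\le r$) as $t^{m_j}h_j(t)$ with $h_j(0)\neq0$. Initially $m_0=l-1$ and $h_0=1$, of degree $q_0=0$, with positive leading coefficient and (vacuously) simple negative zeros. The $(j+1)$st factor is $\E_{d_j,a_j}$ with $d_j=p-1+j$; its parameter is $a_0=1$ and $a_j=r+1-j$ for $j\ge1$, but only $a_j>0$ matters. By~\eqref{eq:E-root-action}, $a_j\E_{d_j,a_j}(t^{m_j}h_j)=t^{m_j}\widetilde h_j$, where $\widetilde h_j$ is the polynomial of Lemma~\ref{lem:E-root-insertion} with $m=m_j$ and $d_*=d_j$. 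The lemma shows that $\widetilde h_j(0)=(m_j+a_j)h_j(0)>0$, so the exponent stays $m_{j+1}=l-1$ throughout. It also gives $h_{j+1}=\widetilde h_j/a_j$ of degree $q_j+1$, with simple negative zeros and positive leading coefficient. By induction $q_j=j$.

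The only hypothesis to verify, and the one real check, is $d_*\ge m+q$. At step $j$ this reads $p-1+j\ge (l-1)+j$, i.e.\ $l\le p$, which holds. Note that the leading coefficient in the lemma is $(d_*+a-m-q)\kappa=(p-l+a_j)\kappa>0$, consistent with this condition. After $r$ steps, $h_r=R_{p,l,r}$ has degree $r$, simple negative zeros, and positive constant term $\binom{l+r-1}{r}$, which matches the stated formulas.

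I expect no serious obstacle. The main point of care is the bookkeeping: the exponent $m$ stays fixed at $l-1$ at every stage, while the degree parameter $d_*=p-1+j$ grows by one per step together with $q_j$. This keeps the slack $d_*-m-q=p-l\ge0$ constant, so the degree condition never fails. As a cross-check, the symmetry $R_{p,l,r}(t)=t^rR_{p,p+1-l,r}(1/t)$ sends negative zeros to negative zeros, consistent with the conclusion.
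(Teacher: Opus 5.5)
Your proposal is correct and follows the paper's proof: you write each intermediate image of $t^{l-1}$ under the factorization of $\mathcal B_{p,r}$ as $t^{l-1}h(t)$, and you apply Lemma~\ref{lem:E-root-insertion} through~\eqref{eq:E-root-action} at each of the $r$ factors, where the constant slack $d_*-(l-1)-\deg h(t)=p-l\ge0$ guarantees the degree hypothesis. Your additional bookkeeping (the parameters $a_j$, the leading coefficient $(p-l+a_j)\kappa$, and the fixed exponent $l-1$) makes explicit what the paper leaves implicit.
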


\begin{proof}
By Lemma~\ref{lem:operator-transition-matrix} and the definition of
$R_{p,l,r}(t)$,
\[
\mathcal B_{p,r}(t^{l-1})=t^{l-1}R_{p,l,r}(t).
\]
At each step, write the image as $t^{l-1}h(t)$ and let $d_*$ be the
current degree parameter.  The difference
$d_*-(l-1+\deg h(t))=p-l\ge0$ stays constant along the factorization.
By~\eqref{eq:E-root-action}, Lemma~\ref{lem:E-root-insertion}
applies at all $r$ factors,
starting with $h(t)=1$, and gives the asserted zeros.
\end{proof}

\subsection{Interlacing and compatibility of adjacent columns}

We compare adjacent columns through the action of the block operator
on a linear combination of two consecutive monomials.
\begin{lemma}\label{lem:column-pencil-preservation}
Put $\zeta_\pm=\pm\tfrac12$.
\begin{enumerate}
\item[(i)] Let $g(x)\ne0$ be real-rooted with simple zeros outside
$\{\zeta_-,\zeta_+\}$, and let $a\ge1$ be an integer.
Then $\mathcal A_a(g(x))$ has the same properties, has degree
$\deg g(x)+1$, and preserves the multiplicity at each endpoint.

\item[(ii)] For integers $0\le e\le d$ and a real polynomial $g(x)$
of degree $e$, the inverse image of $g(x)$ under $\mathscr C_d$ is
\[
\varphi(t)=(t-1)^d g\left(\frac{t+1}{2(t-1)}\right).
\]
If $g(x)$ has multiplicities $m_-$ and $m_+$ at $\zeta_-$ and $\zeta_+$,
respectively, then
\[
\deg\varphi(t)=d-m_+,
\qquad \operatorname{ord}_0\varphi(t)=m_-,
\qquad \operatorname{ord}_1\varphi(t)=d-e.
\]
Its remaining zeros are $(2\xi+1)/(2\xi-1)$, where $\xi$ runs
through the other zeros of $g(x)$, with the same multiplicity.
Thus this transformation preserves real-rootedness, and negative zeros of $\varphi(t)$
correspond to zeros of $g(x)$ in $(\zeta_-,\zeta_+)$.
\end{enumerate}
\end{lemma}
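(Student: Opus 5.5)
For part (i), I will work in the variable $x$ directly. The plan is to run a Rolle-type argument on the logarithmic derivative, in the style of Lemma~\ref{lem:E-root-insertion}. Write $g(x)=Q(x)^{-a}\cdot Q(x)^a g(x)$. By~\eqref{eq:QA-derivative}, $\mathcal A_a(g(x))=Q(x)^{1-a}\frac{d}{dx}\bigl(Q(x)^ag(x)\bigr)$.

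Set $V(x)=Q(x)^ag(x)$. Its distinct real zeros are the simple zeros of $g(x)$ together with $\zeta_\pm$. At $\zeta_\pm$ the multiplicity is $a+m_\pm$, where $m_\pm\in\{0,1\}$ is the multiplicity of $\zeta_\pm$ in $g(x)$. All other zeros are simple, and $V$ is real-rooted with $\deg V=2a+\deg g$.

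Lemma~\ref{lem:rolle-multiplicities} then describes the zeros of $V'$. It has multiplicity $a+m_\pm-1$ at $\zeta_\pm$, one simple zero in each gap between consecutive distinct zeros of $V$, and no other zeros. Dividing by $Q(x)^{a-1}$ leaves the multiplicity $m_\pm$ at each endpoint unchanged. The gap zeros are simple, and they avoid $\zeta_\pm$ because those points are themselves zeros of $V$, so no gap has an endpoint in its interior. The degree of $\mathcal A_a(g)$ is $\deg V-1-2(a-1)=\deg g+1$; the leading coefficient $(\deg g+2a)$ times that of $g$ is nonzero, which confirms this. The only care needed is in the bookkeeping: every intervening zero must be counted, and none may land on $\zeta_\pm$. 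Here integrality of $a\ge1$ is used, since $Q^a$ must be a polynomial.

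For part (ii), I will first invert $\mathscr C_d$. The substitution $t=(2x+1)/(2x-1)$ inverts to $x=(t+1)/(2(t-1))$, and $2x-1=2/(t-1)$. Hence $2^{-d}(2x-1)^d=(t-1)^{-d}$, and $\varphi(t)=(t-1)^dg\bigl((t+1)/(2(t-1))\bigr)$ is a polynomial because $e\le d$.

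Next I factor $g(x)=c\,(x-\zeta_-)^{m_-}(x-\zeta_+)^{m_+}\prod_\xi(x-\xi)$ and substitute each linear factor:
\[
x-\tfrac12=\frac{1}{t-1},\qquad x+\tfrac12=\frac{t}{t-1},\qquad x-\xi=\frac{(1-2\xi)t+(1+2\xi)}{2(t-1)}.
\]
Each such factor is a linear polynomial in $t$ divided by $t-1$, except that the $x-\tfrac12$ factor contributes the constant $1$. The $e$ linear factors absorb $e$ powers of $(t-1)$, which leaves $(t-1)^{d-e}$. Since no remaining factor vanishes at $t=1$, this gives $\operatorname{ord}_1\varphi=d-e$.

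The factor $x+\tfrac12$ contributes $t^{m_-}$, and no other factor vanishes at $t=0$, since $\xi\ne\zeta_-$. The zeros $\xi$ map to $(2\xi+1)/(2\xi-1)$ with the same multiplicities. The degree count is $(d-e)+m_-+(e-m_--m_+)=d-m_+$. Finally, real zeros map to real zeros, and the Möbius map sends $(\zeta_-,\zeta_+)$ onto $(-\infty,0)$, as already noted in the proof of Theorem~\ref{thm:main-real-rooted}. This proves the last assertion. Part (ii) is routine; the substantive step of the whole lemma is the Rolle count in (i).
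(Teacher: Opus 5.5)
Your proof is correct and follows the paper's argument essentially step for step. For (i) you apply Lemma~\ref{lem:rolle-multiplicities} to $Q(x)^a g(x)$ and divide by $Q(x)^{a-1}$, and for (ii) you substitute $x=(t+1)/(2(t-1))$ into the linear factorization of $g(x)$.

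One small slip: the hypothesis in (i) only requires the zeros away from $\zeta_\pm$ to be simple, so $m_\pm$ can be any nonnegative integer, not just $0$ or $1$. This generality is what Theorem~\ref{thm:strict-columns} needs, since there $m_-\ge l-1$. Your argument never uses $m_\pm\le1$, so it covers the general case verbatim.
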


\begin{proof}
For (i), $W(x)=Q(x)^a g(x)$ is real-rooted, with possible multiple zeros
only at the endpoints. Lemma~\ref{lem:rolle-multiplicities} and
$W'(x)=Q(x)^{a-1}\mathcal A_a(g(x))$ give real-rootedness and simplicity
away from them. An endpoint of multiplicity $m$ in $g(x)$ has
multiplicity $m+a-1$ in $W'(x)$, leaving $m$ after division by
$Q(x)^{a-1}$. If $c$ is the leading coefficient of $g(x)$, the leading
term of $\mathcal A_a(g(x))$ is $(\deg g(x)+2a)cx^{\deg g(x)+1}$.

For (ii), the substitution $x=(t+1)/(2(t-1))$ inverts
$t=(2x+1)/(2x-1)$. Let $c$ be the leading coefficient of $g(x)$,
and list its $s=e-m_--m_+$ nonendpoint zeros as $\xi_1,\ldots,\xi_s$,
with multiplicity. Substitution in its factorization gives
\[
\varphi(t)=\frac{c}{2^s}(t-1)^{d-e}t^{m_-}
\prod_{j=1}^{s}\bigl((1-2\xi_j)t+1+2\xi_j\bigr).
\]
Each linear factor has nonzero leading and constant coefficients
and value $2$ at $t=1$. This proves the degree and multiplicity
claims. For real $\xi$, its image is negative exactly when
$-\tfrac12<\xi<\tfrac12$.
\end{proof}

We now complete the proof of Theorem~\ref{thm:intro-columns} by
determining the order of the interlacing zeros of adjacent column
polynomials. For $p\ge2$, $r\ge1$, and $1\le l\le p$, write
\[
\rho_1^{(l)}<\cdots<\rho_r^{(l)}<0
\]
for the zeros of $R_{p,l,r}(t)$.

\begin{theorem}\label{thm:strict-columns}
For $p\ge2$, $r\ge1$, and $1\le l<p$,
\[
\rho_1^{(l+1)}
<\rho_1^{(l)}
<\rho_2^{(l+1)}
<\rho_2^{(l)}
<\cdots
<\rho_r^{(l+1)}
<\rho_r^{(l)}<0.
\]
\end{theorem}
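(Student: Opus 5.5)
We propose to run both columns through the block operator simultaneously, using the pencil $\mathcal B_{p,r}(t^{l-1}(\lambda+t))$, $\lambda\in\mathbb R$, and Lemma~\ref{lem:column-pencil-preservation}. By Lemma~\ref{lem:operator-transition-matrix},
\[
\mathcal B_{p,r}\bigl(t^{l-1}(\lambda+t)\bigr)=t^{l-1}\bigl(\lambda R_{p,l,r}(t)+tR_{p,l+1,r}(t)\bigr).
\]
Interlacing in the stated order is equivalent, by the Hermite--Kakeya--Obreschkoff theorem, to real-rootedness of $\lambda R_{p,l,r}(t)+tR_{p,l+1,r}(t)$ for every real $\lambda$, together with the absence of common zeros. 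Both $R_{p,l,r}$ and $tR_{p,l+1,r}$ have degree $r$, positive leading coefficients, and simple nonpositive zeros by Proposition~\ref{prop:column-rooted}. It is therefore enough to show that every member of the pencil is real-rooted. Once that holds, the two zero sets interlace weakly, and the order in which they alternate is forced by the extra zero at $0$ of $tR_{p,l+1,r}$ and by the constant terms.

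To get real-rootedness, we transport the pencil to the $x$-side. Put $U=2x-1$ and $V=2x+1$. As in the proof of Lemma~\ref{lem:operator-transition-matrix}, $Q(x)\mathscr C_{p-1}(t^{l-1}(\lambda+t))$ is a constant multiple of $U^{p-l}V^{l}\bigl(\lambda U+V\bigr)$. This is real-rooted, and its only multiple zeros are at $\zeta_\pm$. The block operator corresponds to applying $\mathcal A_1$ and then $\mathcal A_r,\ldots,\mathcal A_2$ to the polynomial obtained after dividing by $Q$. Lemma~\ref{lem:column-pencil-preservation}(i) needs simple zeros away from the endpoints, so we rerun its Rolle argument (Lemma~\ref{lem:rolle-multiplicities}) directly on $W=Q^a g$. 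Here $g$ is real-rooted, its zeros are simple away from $\zeta_\pm$, and it may have multiple zeros at the endpoints. The zero of $\lambda U+V$ is simple and lies away from the endpoints unless $\lambda\in\{0,\infty\}$ or the linear factor degenerates ($\lambda=1$). Those finitely many cases we handle by continuity, or directly. Each step therefore preserves real-rootedness. Lemma~\ref{lem:column-pencil-preservation}(ii) then maps the result back to a real-rooted $\varphi(t)$, so $\lambda R_{p,l,r}+tR_{p,l+1,r}$ is real-rooted for all real $\lambda$.

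Strictness comes next. A common zero $\rho<0$ of $R_{p,l,r}$ and $tR_{p,l+1,r}$ would make $\rho$ a zero of every member of the pencil. On the $x$-side, the corresponding point $\xi\in(\zeta_-,\zeta_+)$ would then be a zero of the image of $U^{p-l}V^{l}(\lambda U+V)$ for every $\lambda$. We propose to exclude this by choosing $\lambda$ so that $\lambda U+V$ vanishes at the preimage of the point in question and then following multiplicities. A cleaner route is to prove strict interlacing by induction along the factors. We track the pair $(h_1,h_2)=(t^{l-1}$-stripped images of $t^{l-1}$ and $t^{l})$ through each $\mathcal E_{d_*,a}$, using the same sign evaluation as in Lemma~\ref{lem:E-root-insertion}. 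If $h_1$ and $th_2$ interlace strictly, then at each zero of one polynomial the new polynomial $\widetilde h$ takes the value $\rho(1-\rho)h'(\rho)$ there. A sign count between consecutive zeros then shows that the new zeros interlace strictly again, with the same orientation. The initial pair is $h_1=1$ and $th_2=t$.

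We expect the main obstacle to be this inductive sign bookkeeping. The two polynomials use parameters $(m,d_*)$ that differ by one ($m=l-1$ versus $m=l$), so the multipliers $m+a+(d_*+a-m)t$ differ between them. Interlacing must therefore be verified through the Wronskian-type quantity $h_1(th_2)'-h_1'(th_2)$, whose sign should be preserved by each factor. We would first try to show that this Wronskian, after a single application of $\mathcal E$, equals a positive combination of the old Wronskian and $h_1\cdot th_2$ times a positive factor on $(-\infty,0)$. That would give strict positivity of the Wronskian on the negative axis and hence strict interlacing in the stated order.
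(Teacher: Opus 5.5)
Your transport of the pencil $t^{l-1}(\lambda+t)$ to the $x$-side and the Rolle step are exactly the paper's argument for real-rootedness. Lemma~\ref{lem:column-pencil-preservation}(i) already allows arbitrary multiplicity at $\zeta_\pm$, so it applies as stated and you need not rerun it.

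The genuine gap is strictness. Real-rootedness of every pencil member gives, by Hermite--Kakeya--Obreschkoff, only weak interlacing. Neither of your routes rules out a common zero $\rho<0$ of $R_{p,l,r}$ and $R_{p,l+1,r}$. Choosing $\lambda$ so that $\lambda U+V$ vanishes at the preimage $\xi$ of $\rho$ does not by itself give a contradiction. In that case $\xi$ is a simple zero of $Q$ times the input, so it is not a zero after the first factor $\mathcal A_1$, and nothing in the later Rolle steps prevents it from reappearing. The inductive route remains only a plan. The evaluation $\widetilde h(\rho)=\rho(1-\rho)h'(\rho)$ compares each column image with its own predecessor, not the two columns with each other, and you do not establish the Wronskian identity you would need.

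The missing observation is that your Rolle argument gives simplicity, not just real zeros. Every nonzero member of the pencil has only simple zeros away from $\zeta_\pm$. By Lemma~\ref{lem:column-pencil-preservation}(ii), it therefore has only simple zeros in $(-\infty,0)$. Now put $f=R_{p,l,r}$ and $h=tR_{p,l+1,r}$, and suppose $\rho<0$ were a common zero. Take $\lambda=-h'(\rho)/f'(\rho)$, which is defined because $\rho$ is a simple zero of $f$. Then $\lambda f+h$ would have a double zero at $\rho$, a contradiction. This is the Obreschkoff--Dedieu criterion the paper cites.

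For this you must treat the degenerate members directly rather than by continuity, since limits preserve real-rootedness but not simplicity. In your normalization, $\lambda U+V=2(\lambda+1)x+1-\lambda$. The degenerate cases are $\lambda=0$, where its zero moves to $\zeta_-$, and $\lambda=-1$ (not $\lambda=1$), where it becomes the constant $2$. In both cases the lemma still applies.

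Finally, $tR_{p,l+1,r}(t)$ has degree $r+1$, not $r$, since $[t^r]R_{p,l+1,r}(t)=\binom{p-l+r-1}{r}\ge1$. This degree difference forces the zeros of $tR_{p,l+1,r}(t)$, the largest of which is $0$, to be the outer ones, and that gives the stated order.
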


\begin{proof}
Fix $p\ge2$, $r\ge1$, and $1\le l<p$, and put
\[
f(t)=R_{p,l,r}(t),\qquad h(t)=tR_{p,l+1,r}(t).
\]
We apply the Obreschkoff--Dedieu criterion by showing that every
nontrivial real linear combination of $f(t)$ and $h(t)$ has only simple
real zeros.
For real $(\lambda,\mu)\ne(0,0)$, let
$\psi(t)=t^{l-1}(\lambda+\mu t)$.
Lemma~\ref{lem:operator-transition-matrix} gives
\[
\mathcal B_{p,r}(\psi(t))
=t^{l-1}\bigl(\lambda f(t)+\mu h(t)\bigr).
\]
With $U=2x-1$ and $V=2x+1$, we have
\[
\mathscr C_{p-1}(\psi(t))(x)
=2^{-(p-1)}U^{p-l-1}V^{l-1}(\lambda U+\mu V).
\]
This is nonzero and real-rooted, with at most one zero outside
$\{-\tfrac12,\tfrac12\}$, which is simple if present.

Set
\[
g(x)=\mathscr C_{p+r-1}\bigl(\mathcal B_{p,r}(\psi(t))\bigr)(x).
\]
The endpoint multiplicities of $\mathscr C_{p-1}(\psi(t))(x)$ at
$\zeta_-=-\tfrac12$ and $\zeta_+=\tfrac12$ are
\[
m_-=l-1+\mathbf 1_{\{\lambda=0\}},
\qquad
m_+=p-l-1+\mathbf 1_{\{\mu=0\}},
\]
and
\[
\deg \mathscr C_{p-1}(\psi(t))(x)
=p-1-\mathbf 1_{\{\lambda+\mu=0\}}.
\]
Indeed, the last factor $\lambda U+\mu V$ vanishes at
$\zeta_-$ exactly when $\lambda=0$, at $\zeta_+$ exactly when
$\mu=0$, and becomes the nonzero constant $-2\lambda$ exactly when
$\lambda+\mu=0$.

The block operator contains $r$ first-order factors. By the conjugacy in
Lemma~\ref{lem:A-E-conjugacy} and
Lemma~\ref{lem:column-pencil-preservation}(i), these factors preserve
the two endpoint multiplicities, increase the degree by $r$, and leave
all nonendpoint zeros real and simple.  Hence $g(x)$ has endpoint
multiplicities $m_-$ and $m_+$, all its other zeros are simple and real,
and
\[
\deg g(x)=p+r-1-\mathbf 1_{\{\lambda+\mu=0\}}.
\]
Applying Lemma~\ref{lem:column-pencil-preservation}(ii), with
$d=p+r-1$, gives
\[
\operatorname{ord}_0\mathcal B_{p,r}(\psi(t))=m_-,
\qquad
\operatorname{ord}_1\mathcal B_{p,r}(\psi(t))
=\mathbf 1_{\{\lambda+\mu=0\}},
\]
and every remaining zero of $\mathcal B_{p,r}(\psi(t))$ is the
M\"obius image of a nonendpoint zero of $g(x)$ with the same
multiplicity.  Since
\[
\mathcal B_{p,r}(\psi(t))
=t^{l-1}\bigl(\lambda f(t)+\mu h(t)\bigr),
\]
we obtain
\[
\operatorname{ord}_0(\lambda f(t)+\mu h(t))
=\mathbf 1_{\{\lambda=0\}},
\qquad
\operatorname{ord}_1(\lambda f(t)+\mu h(t))
=\mathbf 1_{\{\lambda+\mu=0\}}.
\]
Thus every nonzero polynomial in the real pencil
$\{\lambda f(t)+\mu h(t):\lambda,\mu\in\mathbb R\}$ has only simple
real zeros. These zeros need not all be negative, since
$\lambda f(t)+\mu h(t)$ has a simple zero at $1$ when $\lambda+\mu=0$.

Since $\deg h(t)=\deg f(t)+1$, we conclude from the
Obreschkoff--Dedieu criterion in the form stated by
Br\"and\'en~\cite[Theorem~8]{Branden04} that $f(t)$ strictly
interlaces $h(t)$.
The zeros of $h(t)$ are $0$ and the negative zeros of
$R_{p,l+1,r}(t)$, so we obtain the stated order and complete
the proof of Theorem~\ref{thm:intro-columns}.
\end{proof}

Following Chudnovsky and Seymour~\cite{CS07}, two real polynomials are
\emph{compatible} if every nontrivial nonnegative linear combination
is real-rooted. In particular, adjacent column polynomials are
compatible, and Theorem~\ref{thm:strict-columns} also locates the zeros
of their nonnegative linear combinations. For
$\lambda,\mu\ge0$ with $\lambda+\mu>0$, the polynomial
$\lambda R_{p,l,r}(t)+\mu R_{p,l+1,r}(t)$ has $r$ simple negative zeros
$\eta_1<\cdots<\eta_r$, and if $\lambda,\mu>0$, then
\[
\rho_i^{(l+1)}<\eta_i<\rho_i^{(l)}\qquad(1\le i\le r).
\]
For positive $\lambda,\mu$, the combination has opposite signs at the
endpoints of each of these $r$ intervals, so degree counting gives the
assertion.  If one coefficient is zero, it follows from
Proposition~\ref{prop:column-rooted}.

\begin{samepage}
We need the restriction to adjacent columns. For example,
\[
R_{3,1,3}(t)=1+12t+25t^2+10t^3,
\qquad
R_{3,3,3}(t)=10+25t+12t^2+t^3,
\]
but
\[
2R_{3,1,3}(t)+R_{3,3,3}(t)
=12+49t+62t^2+21t^3
\]
has discriminant $-27056$, so it is not real-rooted.
\end{samepage}

The same forest generating function determines the zeros of the finite
differences in the column index.

\begin{proposition}\label{prop:column-differences}
Let $\Delta_l$ denote the forward difference in $l$.
For $p,r\ge1$, $0\le q\le r$, and $1\le l\le p-q$,
\[
\Delta_l^qR_{p,l,r}(t)
=(1-t)^qR_{p+q,l+q,r-q}(t).
\]
The polynomial on the left has $r-q$ simple negative zeros and
vanishing order $q$ at $1$.
\end{proposition}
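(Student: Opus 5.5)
The plan is to work with the forest generating function~\eqref{eq:forest-gf}. Summing over $s$ with weight $t^s$ shows that $R_{p,l,r}(t)$ is the coefficient of $y^r$ in
\[
t^{-l}\mathcal Z_{p,l}(y,t)=\Omega^l\Theta^{p+1-l},
\]
where $\Omega=\Omega(y,t)$ and $\Theta=\Theta(y,t)$. The point is that the dependence on $l$ is a single geometric factor:
\[
\Omega^l\Theta^{p+1-l}=\Theta^{p+1}(\Omega/\Theta)^l .
\]
Hence the forward difference in $l$ acts on the generating function by multiplication, giving
\[
\Delta_l^q\bigl(\Omega^l\Theta^{p+1-l}\bigr)=\Omega^l\Theta^{p+1-l-q}(\Omega-\Theta)^q .
\]
Since $\Theta=1+t(\Omega-1)$, we get $\Omega-\Theta=(1-t)(\Omega-1)$. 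So the key step is to rewrite $\Omega-1$ in a form that yields the factor $y^q$ together with the shifted parameters $(p+q,l+q)$.

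For this rewriting, recall $\Omega=1/(1-\tau)$, so $\Omega-1=\tau\Omega=y\Theta\Omega$ by the definition $\Theta=\tau/y$. Therefore
\[
\Delta_l^q\bigl(\Omega^l\Theta^{p+1-l}\bigr)=(1-t)^q\,y^q\,\Omega^{l+q}\Theta^{p+1-l}.
\]
The right-hand side is exactly $(1-t)^q y^q\,\Omega^{l+q}\Theta^{(p+q)+1-(l+q)}$, which is $(1-t)^q y^q$ times the generating function for the parameters $(p+q,l+q)$. Extracting $[y^r]$ then gives
\[
\Delta_l^qR_{p,l,r}(t)=(1-t)^qR_{p+q,l+q,r-q}(t).
\]
Two checks remain. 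First, the range $1\le l\le p-q$ ensures that all columns $l,\dots,l+q$ lie in $\{1,\dots,p\}$, so the forward differences only involve legitimate columns. Second, the identity $[y^r]\mathcal Z_{p,l}$ expression for $R$ holds uniformly for all $l\le p$, including $r-q=0$, where the convention $R_{\cdot,\cdot,0}=1$ agrees with the series $[y^0]=1$. I would also note that $1\le l+q\le p+q$, so the target column index is valid for the enlarged $p+q$.

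For the zero statement, apply Proposition~\ref{prop:column-rooted} with parameters $(p+q,l+q,r-q)$ when $r-q\ge1$: then $R_{p+q,l+q,r-q}(t)$ has $r-q$ simple negative zeros. When $q=r$, this factor is the constant $1$, so there is nothing to check. Because these zeros are negative, they differ from $1$, and so the vanishing order at $1$ is exactly $q$.

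I expect the only delicate point to be confirming that the $l$-dependence of $\mathcal Z_{p,l}/t^l$ is exactly geometric, with no hidden dependence on $l$ through $L$ beyond $\Theta^{p+1-l}$. This is immediate from~\eqref{eq:forest-gf}, so the argument is essentially formal manipulation of generating functions.
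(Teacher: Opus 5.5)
Your proposal is correct and follows the paper's proof. The paper likewise writes $\sum_r R_{p,l,r}(t)y^r=\Theta^{p+1}\Psi^l$ with $\Psi=\Omega/\Theta$, uses $\Psi-1=(1-t)y\Theta\Psi$ (equivalent to your $\Omega-\Theta=(1-t)y\Theta\Omega$), extracts $[y^r]$, and obtains the zero assertions from Proposition~\ref{prop:column-rooted} together with the fact that the remaining factor is nonzero at $1$.
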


\begin{proof}
We set $\Psi(y,t)=\Omega(y,t)/\Theta(y,t)$.
Equation~\eqref{eq:forest-gf} gives
\[
\sum_{r\ge0}R_{p,l,r}(t)y^r=\Theta(y,t)^{p+1}\Psi(y,t)^l.
\]
Since $\Theta(y,t)=1+t(\Omega(y,t)-1)$ and
$\Omega(y,t)-1=y\Omega(y,t)\Theta(y,t)$, we obtain
\begin{align*}
\Psi(y,t)-1
&=\frac{\Omega(y,t)-\Theta(y,t)}{\Theta(y,t)}
=\frac{(1-t)(\Omega(y,t)-1)}{\Theta(y,t)}\\
&=(1-t)y\Omega(y,t)=(1-t)y\Theta(y,t)\Psi(y,t).
\end{align*}
Taking $q$ forward differences gives
\[
\Delta_l^q\bigl(\Theta(y,t)^{p+1}\Psi(y,t)^l\bigr)
=(1-t)^qy^q\Theta(y,t)^{p+q+1}\Psi(y,t)^{l+q}.
\]
Extracting $[y^r]$ proves the stated identity.
The zero assertions follow from Proposition~\ref{prop:column-rooted}
and $R_{p+r,l+r,0}(t)=1$, since the remaining factor is positive at $1$.
\end{proof}

\section{Further remarks}\label{sec:final-remarks}

In this section, we compare successive applications of the first-order
factors with refinement. A first-order factor increases the degree and inserts
zeros, whereas refinement fixes the degree and moves reciprocal
pairs weakly away from $-1$. We explain the limitations of these zero
comparisons and of strict positivity for the transition matrices
and then discuss the related real-rootedness conjecture for reduced
multiset Schett polynomials.

Distinct polynomials $P_\alpha(t)$ and
$P_\beta(t)$ of the same total size cannot have interlacing zeros.
To see this, let $x_1<\cdots<x_d<0$ and $y_1<\cdots<y_d<0$
be their zeros. Interlacing, in either orientation, would imply
$x_i\le y_i$ for every $i$ after interchanging the two polynomials
if necessary. Since both polynomials are monic with constant term
$1$,
\[
\prod_{i=1}^d(-x_i)=\prod_{i=1}^d(-y_i)=1.
\]
Since $-x_i\ge-y_i>0$, equality of the products forces
$x_i=y_i$ for every $i$, contradicting distinctness.

The band support also prevents $K^{(p,r)}$ from being
totally positive when $p\ge2$, since $K_{1,2}^{(p,r)}=0$.
Proposition~\ref{prop:positive-minors} identifies precisely its positive minors.

Lin and Ma~\cite[Conjecture~5.1]{LinMa25} posed a related
real-zero question for reduced multiset Schett polynomials.
Let $\mathsf{ee}(T)$ and $\mathsf{oe}(T)$ count the
nodes of even outdegree at even and odd levels, respectively,
with the root at level $0$. Their conjecture concerns the polynomials
\[
\widehat S_{M,i}(t)=
\sum_{\substack{T\in\T_M\\\lfloor\mathsf{ee}(T)/2\rfloor=i}}
t^{\lfloor\mathsf{oe}(T)/2\rfloor},
\]
and asserts that every nonzero such polynomial has only real zeros.
To apply maximal-label deletion to these slices, we would need to
record degree parity at each base vertex and level parity in every
attached forest. Our distinction between leaves and internal nodes
does not retain that information.
Finding a tractable insertion operator for these statistics is a
natural extension of the present approach.

\FloatBarrier
\section*{Acknowledgements}
Philip B.~Zhang received support from the National Natural Science
Foundation of China under Grant No.~12171362 and Natural Science
Foundation of Tianjin Municipality under Grant No.~\mbox{25JCYBJC00430}.
Tongyuan Zhao received support from the National Natural Science
Foundation of China under Grant No.~12201641.

\section*{Declaration of Generative AI}
During the preparation of this manuscript, the authors used generative AI
tools to assist in exploring possible approaches, checking technical details,
and improving the exposition. The authors verified the mathematical arguments, computations, proofs,
and results and take full
responsibility for the content of this manuscript.

\end{document}